\documentclass[12pt]{amsart}
\usepackage[utf8]{inputenc}
\usepackage[english]{babel}
\usepackage[T1]{fontenc}
\usepackage{amsmath, amssymb}

\usepackage[a4paper,top=3cm,bottom=2cm,left=3cm,right=3cm,marginparwidth=1.75cm]{geometry}

\usepackage{amsthm}
\usepackage{amsfonts}
\usepackage{graphicx}
\usepackage[colorlinks=true, allcolors=blue]{hyperref}
\usepackage{url}
\newtheorem{theorem}{Theorem}
[section]

\newtheorem{corollary}[theorem]{Corollary}
\newtheorem{definition}[theorem]{Definition}
\newtheorem{lemma}[theorem]{Lemma}
\newtheorem{proposition}[theorem]{Proposition}

\theoremstyle{remark}
\newtheorem{remark}[theorem]{Remark}
\newtheorem{example}[theorem]{Example}

%

\def\CC{\mathbb{C}}

\def\NN{\mathbb{N}}
\def\QQ{\mathbb{Q}}
\def\RR{\mathbb{R}}
\def\ZZ{\mathbb{Z}}

\def\Qalg{\overline{\QQ}}

\def\calC{\mathcal{C}}
\def\calD{\mathcal{D}}
\def\calM{\mathcal{M}}
\def\calo{\mathcal{O}}
\def\calR{\mathcal{R}}
\def\calY{\mathcal{Y}}

\def\Cl{\mathrm{Cl}}
\def\Gal{\mathrm{Gal}}
\DeclareMathOperator{\CFF}{CFF} 
\DeclareMathOperator{\CFP}{CFP}

\title[Continued fractions for number fields]{On the finiteness of $\mathfrak{P}$-adic continued fractions for number fields}
\author{Laura Capuano}
\address{Dipartimento di Matematica e Fisica, Università degli Studi di Roma Tre}
\email{laura.capuano@uniroma3.it}
\author{Nadir Murru}
\address{Dipartimento di Matematica, Università di Trento}
\email{nadir.murru@unitn.it}
\author{Lea Terracini}
\address{Dipartimento di Informatica, Università di Torino}
\email{lea.terracini@unito.it}
\thanks{The three authors are members of the INdAM group GNSAGA. The first
author is funded by DISMA, Politecnico di Torino, Dipartimento di Eccellenza
MIUR 2018-2022.}
\keywords{$p$-adic continued fractions, finiteness, Weil height, norm Euclidean fields, norm Euclidean class \\
fractions continues $p$-adiques, finitude, hauteur de Weil, corps euclidiens pour la norme, classes euclidiennes}
\subjclass{11J70, 11D88, 11Y65, 11J72}
\begin{document}

\maketitle 

\begin{abstract}
For a prime ideal $\mathfrak{P}$ of the ring of integers of a number field $K$,
we give a general definition of $\mathfrak{P}$-adic continued fraction,  which also includes classical definitions of continued fractions in the field of $p$--adic numbers. We give some necessary and sufficient conditions on $K$ ensuring that for all but finitely many $\mathfrak{P}$, every $\alpha\in K$ admits a finite $\mathfrak{P}$-adic continued fraction expansion,  addressing a similar problem posed by Rosen in the archimedean setting. 
\end{abstract}

\section{Introduction} 
The classical continued fraction algorithm provides an integer sequence $[a_0, a_1, \ldots]$ that represents a real number $\alpha_0$ by means of the following recursive algorithm:
\begin{equation*} 
\begin{cases}
a_n = \lfloor \alpha_n \rfloor \cr
\alpha_{n+1} = \cfrac{1}{\alpha_n - a_n} \quad \mbox{if } \alpha_n-a_n\neq 0,
\end{cases}
\end{equation*}
for all $n \geq 0$, where $\lfloor\cdot\rfloor$ denotes the integral part of a real number. The $a_n$'s and $\alpha_n$'s are called \textit{partial} and \textit{complete} quotients, respectively. It is easy to see that, for classical continued fractions, the procedure eventually stops if and only if we start with a rational number, and, in the case of irrationals, they provide the best rational approximations of the number; this is one of the reasons why the study of continued fractions is very important in diophantine approximation and transcendence theory.

Motivated by this property, Rosen \cite{Rosen77} posed the problem of finding more general definitions of continued fraction expansions characterizing all the elements of an algebraic number field $K$ by means of finite expansions and providing approximations of elements not in the field by means of elements in $K$ (as well as classical continued fractions provide rational approximations of irrational numbers). In \cite{Rosen77}, Rosen gave an example of such continued fractions in the special case of $\mathbb Q(\sqrt{5})$, using expansions of the form
\[ a_0 + \cfrac{b_1}{a_1 \varphi + \cfrac{b_2}{a_2 \varphi + \cfrac{b_3}{\ddots}}}, \]
where $\varphi$ is the Golden ratio, $b_n = \pm 1$ and the $a_n\in \ZZ$ satisfies the property that $a_n\varphi$ is the integer multiple of $\varphi$ nearest to the respective complete quotients. This is a special case of the so called \textit{Rosen continued fractions}, introduced by the same author in \cite{Rosen54}, where $\varphi$ is replaced by irrational numbers of the form $2\cos\frac{\pi}{q}$ with $q\geq 3$  an odd positive number, with the aim of studying Hecke groups.

The characterization of the real numbers having a finite Rosen continued fraction is still an open problem, see, e.g., \cite{Arnoux09, Bug13, Hanson08} for further details. In \cite{Bernat06}, Bernat defined another continued fraction expansion in $\QQ(\sqrt{5})$, slightly different from the Rosen one, proving that also these continued fractions represent $\QQ(\sqrt{5})$ uniquely. Very recently in \cite{MVV2020} the authors generalized Bernat construction defining the so called $\beta$-continued fraction with the aim of studying when the elements of $\mathbb Q(\beta)$ have a finite representation, where $\beta$ is any quadratic Pisot number. More specifically, the authors proved that, if $\beta$ is either a quadratic Perron number or the square root of a positive integer, then every element of $\mathbb Q(\beta)$ has a finite or eventually periodic $\beta$-continued fraction expansion. Moreover, assuming a conjecture by Mercat \cite{Mercat13}, there exist only four quadratic Perron numbers $\beta$ such that the elements of $\mathbb Q(\beta)$ have finite $\beta$-continued fraction expansion.

\medskip
The problem of Rosen can be naturally translated into the context of $p$--adic numbers $\mathbb Q_p$; indeed, starting from Mahler \cite{Mahler1940}, continued fractions have been introduced and studied in $\mathbb Q_p$ by several authors. In this context, however, there is no natural definition of a $p$--adic continued fraction, since there is no canonical definition for a $p$-adic floor function. The two main definitions of a $p$--adic continued fraction algorithm are due to Browkin \cite{Browkin1978} and Ruban \cite{Ruban1970}; they are both based on the definition of a $p$--adic floor function
\[ 
s(\alpha) = \sum_{n = k}^0 x_n p^n \in \mathbb Q, \quad \mbox{where } \ \alpha = \sum_{n=k}^\infty x_n p^n \in \mathbb Q_p, 
\]
where the $x_n$'s are the representatives modulo $p$ in the interval $(-p/2, p/2)$ for Browkin definition and in the interval $[0, p-1]$ for Ruban definition. These continued fractions have been widely studied by several authors in terms of quality of the approximation, finiteness and periodicity, see, e.g., \cite{Bedocchi1988, Bedocchi1990, Browkin2000, CapuanoVenezianoZannier2019, CapuanoMurruTerracini2020, Laohakosol1985, Ooto02017, Tilborghs1990, DeWeger1988}. In this setting however, many differences with the classical case arise: for example, none of these definitions provide good approximations as in the real case, and no analogue of Lagrange's theorem holds for both Browkin and Ruban continued fractions, hence the problem of finding a standard definition for a $p$-adic continued fraction remains still open.
However, it has been proved that rational numbers have always finite Browkin continued fraction expansion \cite{Browkin2000} and finite or eventually periodic Ruban continued fraction expansion \cite{Laohakosol1985}. 
\medskip

In this paper, we consider the $p$-adic analogue of Rosen question. Given a number field $K$ and a prime ideal $\mathfrak{P}$ in  its ring of integers $\calo_K$, we give a very general definition of $\mathfrak{P}$-adic continued fractions; with this definition, the partial quotients are the values of a $\mathfrak{P}$-adic floor function $s$ which is a locally constant function from the $\mathfrak{P}$-adic completion of $K$ to the ring of $\{\mathfrak{P}\}$-integers of $K$. We will call the data $\tau=(K,\mathfrak{P},s)$ a \emph{type} and we will introduce the notion of continued fractions of type $\tau$. With this definition, Browkin and Ruban continued fractions arise as particular $p$-adic types for $\QQ$. If every element of $K$ has a finite (resp. periodic) $\tau$-expansion, then we shall say that the type $\tau$ enjoys the \emph{Continued Fraction Finiteness} ($\CFF$) (resp. \emph{Continued Fraction Periodicity} ($\CFP$)) property. Moreover, we shall say that the field $K$ enjoys the $\mathfrak{P}$-adic  $\CFF$ (resp. $\CFP$) property if there exists a $\CFF$ (resp. $\CFP$) type $\tau=(K,\mathfrak{P},s)$. It is well known that $\QQ$ satisfies the $p$-adic $\CFF$ property for every odd prime $p$ because of the finiteness of Browkin continued fraction expansions of rational numbers (see \cite{Browkin1978}). 
\medskip

In the first part of the paper, we prove a sufficient condition for a type to have the $\CFF$ (resp. $\CFP$) property using general properties of the multiplicative Weil height of algebraic numbers and of the norms of matrices. This result allows us to study the $\mathfrak{P}$-adic $\CFF$ property when $K$ is a norm Euclidean field; in particular, we prove that a norm Euclidean field with Euclidean minimum $< 1$ satisfies the $\mathfrak{P}$-adic $\CFF$ property for all but finitely many prime ideals $\mathfrak{P}$. Furthermore, for certain Euclidean quadratic fields $K$ we provide some more effective constructions by exploiting the form of unitary neighborhoods covering a fundamental domain of $\calo_K$ as done in \cite{Eggleton1992}.  

In the last part of the paper, we study the $\CFF$ property of $\mathfrak{P}$--adic continued fractions in relation with the structure of the ideal class group for number fields which are not necessarily norm Euclidean. First, we show that, if the number field $K$ satisfies the $\mathfrak{P}$-adic $\CFF$ property for all but finitely many prime ideals $\mathfrak P$, then $\calo_K$ is a PID, giving examples of number fields for which the $\CFF$ property fails to hold. Moreover, under milder hypotheses, we show that it is possible to ensure the $\CFF$ property for continued fractions associated to (almost all) primes belonging to a norm Euclidean class, in the sense of \cite{LenstraJr1979}. Finally, for a general number field, we show that the obstruction to the $\CFF$ property depends on the existence of infinitely many partial quotients with $\mathfrak{P}$-adic valuation equal to $-1$.
\medskip

We conclude this introduction by pointing out some open problems and directions for future work. First, effectiveness: our main results assert the existence of $\CFF$ types for a given number field, but in general it is not easy to define them explicitly. The construction of types satisfying the $\CFF$ property and the analysis of their properties, such as the study of the arithmetic of partial quotients, or of the dependence between the length of a finite continued fraction and the height of the algebraic number that it represents are interesting topics that we left outside of the scope of the present work. 
Moreover, it would be nice to obtain a full characterization of number fields $K$ satisfying the $\mathfrak{P}$-adic $\CFF$ for a given ideal $\mathfrak{P}$. We show that a necessary condition for $\CFF$ is that the ideal class group $K$ is generated by the class of $\mathfrak{P}$. We do not know if this condition is also sufficient, but we do not have arguments against this possibility.

Last, it would be nice to investigate periodicity: although we state a sufficient condition for a type to enjoy the $\CFP$ property, the present paper focuses specifically on finiteness. Nevertheless, periodicity is also a very interesting question, and an algebraic characterization of the elements represented by a periodic expansion (relatively to a given type) would be a challenging objective. 

\section{Notations and prerequisites}
For every rational prime $p$, let $|\cdot|_p$ be the $p$-adic absolute value, normalized in such a way that $|p|_p=\frac 1 p$.  The archimedean absolute values on $\RR$ or $\CC$ will be denoted by $|\cdot |$ or by $|\cdot |_\infty$ respectively. We will denote by $\overline{K}$ an algebraic closure of any field $K$.\\

Let $K$ be a number field of degree $d$ over $\QQ$, and let $\mathcal{O}_K$ be its ring of integers. We fix a prime ideal $\mathfrak{P}$ of $\mathcal{O}_K$ lying over an odd prime $p$.
Let $\calM_K$ be a set of representatives for the places of $K$. For every rational prime $q$ and every  $v\in\mathcal{M}_K$ above $q$ let $K_{v}\subseteq \overline{\mathbb{Q}}_q$ be the completion of $K$ w.r.t. the $v$-adic valuation and let $\calo_v$ be its valuation ring; we put $d_v=[K_v:\QQ_q]$. Let $|\cdot |_v=|N_{K_v/\QQ_q}(\cdot)|_q^{\frac 1{d_v}}$ be the unique extension  of $|\cdot |_q$ to $K_v$.  Then  the \emph{product formula} $$\prod_{v\in\mathcal{M}_K} |x|_v^{d_v}=1$$ holds for all $x\in K^\times$ (\cite[Prop. 1.4.4]{BombieriGubler2006}). We recall the definiton of multiplicative Weil height that will be useful in the paper.

\begin{definition}
  For $\alpha\in K $,
the (multiplicative) \emph{Weil height} is defined as
$$H(x) = \prod_{v\in\calM_K}
\sup(1, |x|_v)^{\frac {d_v} {d}}.$$
\end{definition}

Notice that all but finitely many factors of the infinite product are
equal to $1$, hence $H(x)$ is well defined. Moreover,  thanks to the choice of the normalization, the definition does not depend on the number field $K$, hence it extends to a
function $H : \Qalg \to [1, +\infty)$.
The function $H$ satisfies the following important properties (see \cite{BombieriGubler2006}):

\begin{proposition}\label{prop:propheight}
For every non-zero $x, y \in \Qalg$, we have:
\begin{itemize}
\item[a)] $H(x + y) \leq 2H(x)H(y)$;
\item[b)] $ H(xy) \leq  H(x)H(y)$;
\item[c)] $H(x^n) = H(x)^{|n|}$
for all $n \in\ZZ$;
\item[d)] $H(\sigma(x)) = H(x)$ for all $\sigma\in \Gal(\Qalg/\QQ)$;
\item[e)] {\bf Northcott’s theorem:} There are only finitely many algebraic numbers of bounded degree and bounded height.
\item[f)] {\bf Kronecker’s theorem:} $H(x) = 1$ if and only if x is a root of unity.
\end{itemize}
\end{proposition}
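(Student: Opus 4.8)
The plan is to obtain all six properties directly from the defining product $H(x) = \prod_{v \in \calM_K} \sup(1,|x|_v)^{d_v/d}$ together with the product formula $\prod_v |x|_v^{d_v} = 1$, enlarging $K$ whenever needed so that it contains all the elements in play (this is harmless, since $H$ does not depend on the field). For (a) and (b) I would argue place by place: for non-archimedean $v$ the ultrametric inequality gives $\sup(1,|x+y|_v) \le \sup(1,|x|_v)\sup(1,|y|_v)$, whereas for archimedean $v$ the triangle inequality costs an extra factor $2$; raising to the exponent $d_v/d$ and using $\sum_{v \mid \infty} d_v = d$, the archimedean factors $2^{d_v/d}$ multiply to exactly $2$, giving (a), and (b) is identical with no archimedean loss since $|xy|_v=|x|_v|y|_v$. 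For (c) with $n \ge 0$ the identity $\sup(1,|x^n|_v) = \sup(1,|x|_v)^n$ yields $H(x^n)=H(x)^n$ termwise; the case $n<0$ reduces to $H(x^{-1})=H(x)$, and here the product formula enters: from $\sup(1,|x|_v) = |x|_v\,\sup(1,|x^{-1}|_v)$ one gets $H(x) = \bigl(\prod_v |x|_v^{d_v}\bigr)^{1/d}\, H(x^{-1}) = H(x^{-1})$.

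For (d) I would pass to the Galois closure $L$ of $K/\QQ$ and compute $H$ using $L$; an automorphism $\sigma$ permutes $\calM_L$ with $|\sigma(x)|_{\sigma v} = |x|_v$ and $d_{\sigma v}=d_v$, so the defining product is merely reindexed. Property (f) then follows from (c) and (e): if $x^n=1$ then $H(x)^n = H(1) = 1$ forces $H(x)=1$; conversely, if $H(x)=1$ then all powers $x^n$ have height $1$ and degree at most $\deg x$, so by (e) the set $\{x^n : n \ge 1\}$ is finite, whence $x^i=x^j$ for some $i<j$ and $x^{j-i}=1$.

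The one substantial point is Northcott's theorem (e), which I would deduce from the relation between $H$ and the minimal polynomial. For $x$ of degree $e$ over $\QQ$ with conjugates $x_1,\dots,x_e$ and primitive integral minimal polynomial $f = a_0\prod_i (T-x_i) \in \ZZ[T]$ with $a_0 > 0$, the plan is to prove $H(x)^e = a_0 \prod_i \sup(1,|x_i|)$ by splitting $\prod_v \sup(1,|x|_v)^{d_v}$ (computed over $\QQ(x)$, so $d=e$) into its archimedean part, which reproduces $\prod_i \sup(1,|x_i|)$ after identifying archimedean places with conjugate pairs of embeddings, and its non-archimedean part, which reproduces $a_0$. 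Then every coefficient of $f$ is bounded: $|c_j| = a_0\,|\sigma_j(x_1,\dots,x_e)| \le \binom{e}{j} H(x)^e$. Hence the bounds $\deg x \le D$ and $H(x) \le B$ confine $f$ to the finite set of integer polynomials of degree $\le D$ with all coefficients bounded by $2^D B^D$ in absolute value, which proves the finiteness. I expect the main obstacle to be the non-archimedean half of this identity, namely verifying that $\prod_{v \mid q} \sup(1,|x|_v)^{d_v}$ recovers exactly the $q$-part of $a_0$; this is a local computation involving the splitting of $q$ in $\QQ(x)$, and it is the step where I would either argue most carefully or simply invoke the corresponding statement in \cite{BombieriGubler2006}.
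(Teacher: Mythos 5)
The paper gives no proof of this proposition: it simply states the properties and cites \cite{BombieriGubler2006}. Your argument is the standard textbook proof found in that reference, and it is correct throughout: the place-by-place estimates for (a)--(b) with the archimedean factors $2^{d_v/d}$ multiplying to $2$, the reduction of the case $n<0$ in (c) to $H(x^{-1})=H(x)$ via the product formula, the reindexing of places over the Galois closure for (d), the derivation of Kronecker from Northcott in (f), and the identification $H(x)^e = a_0\prod_i\sup(1,|x_i|)$ with the Mahler measure of the primitive minimal polynomial followed by coefficient bounds for (e). The one step you flag yourself --- that $\prod_{v\mid q}\sup(1,|x|_v)^{d_v}$ equals the $q$-part of $a_0$, essentially Gauss's lemma --- is indeed the only place where nontrivial local work is hidden, and it is precisely the lemma proved in the cited reference, so deferring to it there is legitimate.
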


\section{$\mathfrak{P}$-adic continued fractions}\label{sect:genfacts}

In this section we show, given a number field $K$ and a prime ideal $\mathfrak{P}$ of $\calo_K$, how to define a general $\mathfrak{P}$-continued fraction. Our general definition will generalise the classical definitions of $p$-adic continued fractions given by Browkin and Ruban.

\subsection{$\mathfrak{P}$-adic floor functions and types} Let $\mathfrak{P}$ be a prime ideal of $\calo_K$, lying over an odd rational prime $p$ and let $v_0\in\calM_K$ be the place corresponding to $\mathfrak{P}$.
\begin{definition}
A \emph{$\mathfrak{P}$-adic floor function} for $K$ is a function $s:K_{v_0}\to K$ such that
\begin{itemize} 
\item[a)] $|\alpha-s(\alpha)|_{v_0}<1$ for every $\alpha\in K_{v_0}$;
\item[b)] $|s(\alpha)|_{v}\leq 1$ for every non archimedean $v\in \calM_K\setminus\{v_0\}$;
\item[c)] $s(0)=0$;
\item[d)] $s(\alpha)=s(\beta)$ if $|\alpha-\beta|_{v_0}<1$.
\end{itemize}
\end{definition}
 
We recall the strong approximation theorem in number fields \cite[Theorem 4.1]{Cassels}.
\begin{theorem} \label{teo:strong_approx}
Let $K$ be a global field and $\calM_K=S\cup T\cup \{w\}$ be a partition of the places of $K$ with $S$ finite. For every $v\in S$, let $a_v$ be an element in $K_v$ and let $\epsilon_v\in\RR_{>0}$. Then, there exists an $x$ in $K$ such that 
\begin{align*}
    |x-a_v|_v<\epsilon_v &\quad\hbox{for every } v\in S,\\
    |x|_v\leq 1 &\quad\hbox{for every } v\in T.
    \end{align*}
    \end{theorem}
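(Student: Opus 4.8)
\emph{Proof proposal.} This is the classical strong approximation theorem for global fields, so the plan is the standard adelic one. Write $\mathbb{A}_K$ for the adele ring of $K$ and $\mathbb{A}_K^{w}$ for the restricted product of the completions $K_v$ over $v\in\calM_K\setminus\{w\}$ (with respect to the rings $\calo_v$). First I would reduce the statement to the single assertion that the diagonal image of $K$ is \emph{dense} in $\mathbb{A}_K^{w}$ --- equivalently, that $K+K_w$ is dense in $\mathbb{A}_K$, where $K_w$ sits inside $\mathbb{A}_K$ as the $w$-coordinate subgroup. Granting this, the theorem is a routine unwinding: form the adele $\alpha\in\mathbb{A}_K$ with $\alpha_v=a_v$ for $v\in S$ and $\alpha_v=0$ otherwise (a legitimate adele since $S$ is finite), and pick the basic open neighbourhood of its image $\alpha^w\in\mathbb{A}_K^{w}$ cut out by $\{|y-a_v|_v<\epsilon_v\}$ at the finitely many places of $S$, by $\calo_v$ at every non-archimedean $v\in T$, and by $\{|y|_v<1\}$ at the finitely many archimedean places of $T$ (if any); any $x\in K$ whose image lands in this neighbourhood satisfies all the required inequalities, and nothing is imposed at $w$ since $w\notin S\cup T$.

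The content is therefore the density of $K$ in $\mathbb{A}_K^{w}$, for which I would use the structural fact that $K$ is a discrete cocompact subgroup of $\mathbb{A}_K$ --- i.e.\ $\mathbb{A}_K/K$ is compact, proved by Minkowski's convex body theorem in the number field case (Riemann--Roch in the function field case) --- so that $\mathbb{A}_K=K+Y$ for some compact $Y=\prod_v Y_v$ with $Y_v=\calo_v$ for almost all $v$. The key manoeuvre is then: for any prescribed finite set of places and any prescribed accuracy one may shrink $Y$ at those places by replacing it with $\lambda Y$ for a suitable $\lambda\in K^\times$; since $\lambda K=K$ one still has $K+\lambda Y=\mathbb{A}_K$, and by the product formula such a $\lambda$ can be chosen with $|\lambda|_v$ arbitrarily small at the prescribed places and $|\lambda|_v\le 1$ at every finite place \emph{except} $w$, the unavoidable growth of $|\lambda|$ being forced onto the one unconstrained place $w$. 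Feeding any element of $\mathbb{A}_K^{w}$ into the decomposition $\mathbb{A}_K=K+\lambda Y$ with $\lambda$ chosen this way then produces an element of $K$ within the prescribed distance at the places of $S$ and integral at every other place except $w$ --- which is exactly what density asserts. Equivalently: approximate at $S$ first by ordinary weak approximation, then use such a $\lambda$ to clear the finitely many denominators so introduced without spoiling the approximation.

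The step I expect to be the real obstacle is the production of the rescaling element $\lambda$ and, more broadly, the global input it rests on: weak approximation alone is elementary, but here one genuinely needs $\mathbb{A}_K/K$ compact (geometry of numbers) together with the product formula, and it is precisely the freedom at $w$ --- arbitrarily high powers of $\mathfrak{P}$, resp.\ of $w$, allowed in the denominator --- that turns an a priori obstructed, finite-volume problem into a solvable one. Keeping all of this straight is exactly the work carried out in \cite[Theorem 4.1]{Cassels}, which is the reference we invoke, so in the paper we simply cite it rather than reproduce the argument.
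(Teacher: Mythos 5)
The paper offers no proof of this statement: it is quoted as the strong approximation theorem with a citation to \cite[Theorem 4.1]{Cassels}, which is exactly what you anticipate at the end of your sketch. Your outline is a correct rendering of the standard adelic argument in that reference --- reduction to the density of $K$ in the restricted product away from $w$, cocompactness of $K$ in $\mathbb{A}_K$ via geometry of numbers, and the rescaling element $\lambda$ obtained from the product formula with the unavoidable growth pushed onto the free place $w$ --- so there is nothing substantive to compare beyond noting that you and the authors rely on the same source.
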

    
By strong approximation (or some other arguments), $\mathfrak{P}$-adic floor functions always exist, and there are infinitely many. 

We define
\[\calo_{K,\{v_0\}}=\{\alpha\in K\ |\ |\alpha|_v\leq 1\hbox{ for every non archimedean } v\not=v_0 \hbox{ in } \calM_K\}.
\]
Then, we can regard a $\mathfrak{P}$-adic floor function as a map $s: K_{v_0}/\mathfrak{P}\calo_{v_0}\to \calo_{K,\{v_0\}}$ such that $s(\mathfrak{P}\calo_{v_0})=0$ and which is a section of the projection map $K_{v_0}\to K_{v_0}/\mathfrak{P}\calo_{v_0}$.
Therefore, the choice of a $\mathfrak{P}$-adic floor function amounts to choose  a set $\calY$ of representatives of the cosets of $\mathfrak{P}\calo_{v_0}$ in $K_{v_0}$ containing $0$ and contained in
$\calo_{K,\{v_0\}}$.\\
We shall call the data $\tau=(K,\mathfrak{P},s)$ (or $(K,\mathfrak{P},\calY)$) a \emph{($\mathfrak{P}$-adic) type}. 
\subsection{Types arising from generators of $\mathfrak{P}$} \label{sec:special_type}
In the case $\mathfrak{P}$ is principal, there is a more natural way of defining a floor function associated to $\mathfrak{P}$. Indeed, let $\pi\in\calo_K$ be  generator of $\mathfrak{P}$ and let $\mathcal{R}$ be a  complete set of representatives of $\mathcal{O}_K/\mathfrak{P}$ containing $0$. Then, every $\alpha\in K_{v_0}$ can be expressed uniquely as a Laurent series  $\alpha=\sum_{j=-n}^\infty c_j\pi^j$, where $c_j\in\mathcal{R}$ for every $j$. It is possible to define a  $\mathfrak{P}$-adic floor function by
$$s(\alpha)=\sum_{j=-n}^0c_j\pi^j\in K. $$
We shall denote the types $\tau=(K,\mathfrak{P},s)$ obtained in this way by $\tau=(K,\pi,\calR)$, and we will usually call them \emph{special types}.

\begin{example}[Browkin and Ruban types over $\mathbb{Q}$.]
When $K=\QQ$ and $\pi=p$ odd prime, two main special types have been studied in the literature:
\begin{itemize}
\item the \emph{Browkin type} $\tau_B=(\QQ,p,\{- \frac {p-1} 2,\ldots, \frac {p-1} 2\})$  (see \cite{Browkin1978, Bedocchi1988, Bedocchi1989, Bedocchi1990, Browkin2000, CapuanoMurruTerracini2020});
\item the \emph{Ruban type} $\tau_R=(\QQ,p,\{0,\ldots, p-1\})$  (see \cite{Ruban1970, Laohakosol1985, Wang1985, CapuanoVenezianoZannier2019}).
\end{itemize}
\end{example}

\begin{remark} The absolute Galois group $\Gal(\Qalg/\QQ)$ acts on the set of types; indeed, if $\tau=(K,\mathfrak{P}, s)$ is a type, then $\sigma\in \Gal(\Qalg/\QQ)$ induces a continuous map $K_{v_0} \to K^\sigma_{v'_0}$, where $v'_0\in \calM_{K^\sigma}$ corresponds to $\mathfrak{P}^\sigma$. Then  $\tau^\sigma=(K^\sigma,\mathfrak{P}^\sigma, s^\sigma)$ is also a type, where $s^\sigma=\sigma\circ s\circ \sigma^{-1}$. In particular, if $K/\QQ$ is a Galois extension and $\sigma$ belongs to the decomposition group 
$$D_\mathfrak{P}=\{\sigma\in\Gal(\Qalg/\QQ)\ |\  \mathfrak{P}^\sigma=\mathfrak{P}\},$$
   then $\tau^\sigma=(K,\mathfrak{P}, s^\sigma)$ is again a $\mathfrak{P}$-adic type.
\end{remark}

\subsection{ $\mathfrak{P}$-adic continued fractions associated to types}
In this section we give the definition of $\mathfrak{P}$-adic continued fraction algorithm associated to $\mathfrak{P}$-adic types and prove some general properties for these continued fractions generalising the analogous well-established properties in the case of Browkin and Ruban. \\

Let $\tau=(K,\mathfrak{P},s)$ be a type and put 
\[
\mathcal{Y}_s:=im(s).
\]
Then, $\mathcal{Y}_s$ is a discrete subset of $K_{v_0}$.

\begin{definition} Let $\tau=(K,\mathfrak{P},s)$ be a type. 
A \emph{continued fraction}  of type $\tau$ is a possibly infinite sequence
 $$[a_0,a_1,\ldots]$$
 of elements of $\mathcal{Y}_s$ such that $|a_n|_{v_0}>1$ for $n\geq 1$.
\end{definition}
We define the sequences $(A_n)_{n=-1}^\infty$, $(B_n)_{n=-1}^\infty$  by putting
\begin{align*} 
A_{-1}=1,\ &A_0=a_0,\ A_{n}= a_nA_{n-1}+A_{n-2},\\
B_{-1}=0,\ &B_0=1,\ \ B_{n}= a_nB_{n-1}+B_{n-2},
\end{align*}
for $n\geq 1$. By using matrices, we can write 
 \begin{eqnarray}\label{eq:matriciA}
 \mathcal{A}_n &=& \begin{pmatrix} a_n &1 \\
   1 &0 \end{pmatrix}\quad\hbox{ for } n\geq 0,\\
   \mathcal{B}_n &= &\begin{pmatrix} {A_{n}} &{A_{n-1}}\nonumber \\
  {B_{n}} &{B}_{n-1} \end{pmatrix} \quad\hbox{ for } n\geq 0;
 \end{eqnarray}
 then,
 \begin{equation*}\mathcal{B}_n =\mathcal{B}_{n-1}\mathcal{A}_n= \mathcal{A}_0\mathcal{A}_1\ldots \mathcal{A}_n.
   \end{equation*}
 Notice that
 $$ \det(\mathcal{A}_n) = -1,\quad
  \det(\mathcal{B}_n) = (-1)^{n-1}.$$
We define the $n^{th}-$\textit{convergent} to be
\begin{eqnarray*}
 Q_n&= &\frac {A_{n}}{B_{n}}=
a_0+\cfrac{1}{a_1+\cfrac{1}{\ddots+\cfrac{1}{a_n}}}
\quad \hbox{for  $n\geq 0$.}
\end{eqnarray*}
Let $[a_0,a_1,\ldots]$ be a continued fraction of type $\tau$; then, an easy induction shows that 
\begin{equation}\label{eq:K} |B_n|_{v_0}=\prod_{j=1}^n |a_j|_{v_0}.\end{equation}

We notice that the sequence of the convergents $(Q_n)_{n\in \NN}$ converges $\mathfrak{P}$-adically; indeed, it is easy to see that $|Q_n-Q_{n-1}|_{v_0}=\frac 1 {|B_n|_{v_0}|B_{n-1}|_{v_0}}$. Then the claim follows by \eqref{eq:K}, because of the hypothesis that $|a_n|_{v_0}>1$ for every $n\geq 1$. \\

Conversely, every $\alpha\in K_{v_0}$ is the limit of a (unique) continued fraction of type $\tau$  obtained applying the following algorithm:
\begin{equation}
\label{eq:alpharel} 
\left\{ \begin{array}{lll}
\alpha_0&=&\alpha, \\
\alpha_{n+1}&= & \frac 1 {\alpha_{n}-a_n},\\
a_n&=& s(\alpha_n).
\end{array}\right.
\end{equation}
Then, we have the following:
\begin{eqnarray*}	
\alpha_n &=& a_n+\frac 1 {\alpha_{n+1}}.
\end{eqnarray*}
The sequence $[a_0,a_1,\ldots ]$ obtained by applying \eqref{eq:alpharel} is called the \emph{continued fraction expansion of type $\tau$} of $\alpha$.
For any $n\geq -1$, we define 
\begin{equation}
V_n := A_n-\alpha B_n.  \label{eq:Vn}
\end{equation}
The following properties are easily proved (see \cite[Section 2]{CapuanoMurruTerracini2020}).
\begin{proposition}\label{prop:comblinnulla}
For every $n\geq 1$, one has:
\begin{itemize}
\item[a)] $ V_n=a_nV_{n-1}+V_{n-2}$;
\item[b)] $\alpha_{n}V_{n-1}+V_{n-2}=0$;
\item[c)] $|V_n|_{v_0}=\prod_{j=1}^{n+1}\frac 1 {|a_j|_{v_0}}$;
\item[d)] $ V_n=(-1)^{n+1}\prod_{j=1}^{n+1} \frac 1 {\alpha_j};$
\item[e)] $ \alpha_{n} B_{n-1}+ B_{n-2}=\prod_{j=1}^{n} {\alpha_j};$
\item[f)] \label{eq:formulauno} $ \alpha=\frac {\alpha_nA_{n-1}+A_{n-2} }{\alpha_nB_{n-1}+ B_{n-2} };$
\item[g)] $|\alpha_n|_{v_0}=|a_n|_{v_0}> 1$.
\end{itemize}
\end{proposition}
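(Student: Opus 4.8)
The plan is to establish the seven identities (a)--(g) by elementary inductions on $n$, carried out in an order that respects their mutual dependencies; each uses only the defining recurrences of $A_n,B_n$, the relation $\alpha_n=a_n+1/\alpha_{n+1}$ coming from \eqref{eq:alpharel}, and the axioms of a $\mathfrak P$-adic floor function. This is the same bookkeeping as in \cite[Section 2]{CapuanoMurruTerracini2020}, now in the setting of a general type.

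First I would prove (g), which is self-contained: for $n\ge 1$ one has $\alpha_n=1/(\alpha_{n-1}-a_{n-1})$ with $|\alpha_{n-1}-a_{n-1}|_{v_0}<1$ by axiom (a) of the floor function, so $|\alpha_n|_{v_0}>1$; since also $|\alpha_n-a_n|_{v_0}<1$, the ultrametric inequality forces $|a_n|_{v_0}=|\alpha_n|_{v_0}$. Property (a) is then immediate, by subtracting $\alpha$ times the $B_n$-recurrence from the $A_n$-recurrence. For (b) I would induct on $n\ge 1$: the base case $n=1$ reduces to $\alpha_1 V_0+V_{-1}=\alpha_1(a_0-\alpha)+1=0$ since $\alpha_1(\alpha_0-a_0)=1$ and $V_{-1}=1$; for the inductive step, combine (a) with the hypothesis $V_{n-2}=-\alpha_n V_{n-1}$ to get $V_n=(a_n-\alpha_n)V_{n-1}=-V_{n-1}/\alpha_{n+1}$, which is precisely $\alpha_{n+1}V_n+V_{n-1}=0$.

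From here the rest falls out quickly. Iterating the relation $V_{n-1}=-V_{n-2}/\alpha_n$ of (b) down to $V_{-1}=1$ gives (d), and (c) is then obtained by passing to $v_0$-adic absolute values in (d) and invoking $|\alpha_j|_{v_0}=|a_j|_{v_0}$ from (g). For (e) I would set $D_n:=\alpha_n B_{n-1}+B_{n-2}$ and use the $B_n$-recurrence together with the identity $\alpha_{n+1}a_n+1=\alpha_{n+1}\alpha_n$ (a restatement of $\alpha_n=a_n+1/\alpha_{n+1}$) to obtain $D_{n+1}=\alpha_{n+1}D_n$; since $D_1=\alpha_1$, this yields $D_n=\prod_{j=1}^n\alpha_j$. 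Finally (f) is a rearrangement of (b): expanding $V_k=A_k-\alpha B_k$ in $\alpha_nV_{n-1}+V_{n-2}=0$ gives $\alpha_nA_{n-1}+A_{n-2}=\alpha(\alpha_nB_{n-1}+B_{n-2})$, and one divides by $\alpha_nB_{n-1}+B_{n-2}=\prod_{j=1}^n\alpha_j$, which is nonzero by (e) and (g).

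I do not expect a genuine obstacle: everything is a formal manipulation of the recurrences and the floor-function axioms. The one point that needs a little care is the case of a \emph{finite} expansion, where \eqref{eq:alpharel} terminates at some $N$ with $\alpha_N=a_N$; then $\alpha_{N+1}$ is undefined, $V_n=0$ for $n\ge N$, and the identities involving $\alpha_n$ are to be read only for indices in the range where $\alpha_n$ exists. With that convention the inductions above apply verbatim.
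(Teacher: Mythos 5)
Your proof is correct and is exactly the elementary bookkeeping the paper has in mind: the paper gives no argument of its own here, delegating to \cite[Section 2]{CapuanoMurruTerracini2020}, and your inductions (proving (g) and (a) first, deducing (b), (d), (c), then (e) and (f), with the stated convention for terminating expansions) fill in that reference faithfully. One remark: passing to absolute values in (d) yields $|V_n|_{v_0}=\prod_{j=1}^{n+1}|a_j|_{v_0}^{-1}$, not $\prod_{j=1}^{n+1}|a_j|_{v_0}$ as item (c) is literally printed; the reciprocal version is what the paper itself uses later (e.g.\ in the proofs of Proposition \ref{prop:Browkinfin} and Theorem \ref{teo:FCcondition}), so the statement of (c) contains a typo and your derivation gives the intended identity.
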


\subsection{The quality of the $\mathfrak{P}$-adic approximation}  
Given $\alpha\in K_{v_0}$, for every $n\ge 1$ let us put $\epsilon_n(\alpha):= {|V_{n-1}|_{v_0}}$, with the convention that $\epsilon_0(\alpha)=1$.
Notice that, for every $i=0,\ldots, n$, we have
\begin{equation}\label{eq:epsilon}
    \epsilon_n(\alpha) = \epsilon_i(\alpha) \epsilon_{n-i}(\alpha_i).
\end{equation}
\begin{proposition}\label{prop:quality1} Given $\alpha,\alpha'\in K_{v_0}$, let $[a_0,\ldots, a_k,\ldots]$, $[a'_0,\ldots, a'_k,\ldots]$ be the continued fraction expansions of type $\tau$ of $\alpha$ and $\alpha'$, respectively. Assume that the length of the expansion of $\alpha$ is bigger or equal than $n$. If $|\alpha-\alpha'|_{v_0}<\epsilon_n(\alpha)^2$, then the length of the expansion of $\alpha'$ is bigger or equal than $n$ and $a_i=a'_i$ for every $i=0,\ldots, n$.
\end{proposition}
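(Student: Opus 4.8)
The plan is to argue by induction on $n$, in the spirit of \cite[Section~2]{CapuanoMurruTerracini2020}. Recall that, by Proposition~\ref{prop:comblinnulla} and \eqref{eq:K}, $\epsilon_n=\prod_{j=1}^{n}|a_j|_{v_0}^{-1}=|B_n|_{v_0}^{-1}$; in particular $\epsilon_0=1$ and, since $|a_{n+1}|_{v_0}>1$, the sequence $(\epsilon_n)_n$ is non-increasing. The case $n=0$ is immediate: if $|\alpha-\alpha'|_{v_0}<\epsilon_0^2=1$, then property (d) of the $\mathfrak{P}$-adic floor function gives $a_0=s(\alpha)=s(\alpha')=a'_0$, and both expansions trivially have length $\geq 0$.

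For the inductive step, assume the statement for $n$ and suppose the expansion of $\alpha$ has length $\geq n+1$ and $|\alpha-\alpha'|_{v_0}<\epsilon_{n+1}^2$. Since $\epsilon_{n+1}^2\leq\epsilon_n^2$ and the expansion of $\alpha$ has length $\geq n$, the inductive hypothesis applies and yields $a_i=a'_i$ for $i=0,\dots,n$, together with the fact that $\alpha'_n$ is defined and, by Proposition~\ref{prop:comblinnulla}(g), satisfies $|\alpha'_n|_{v_0}=|a'_n|_{v_0}=|a_n|_{v_0}$. The core of the argument is then to compare the complete quotients $\alpha_n$ and $\alpha'_n$. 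As $a_i=a'_i$ for $i\leq n-1$, the matrices $\mathcal{B}_{n-1}$ attached to $\alpha$ and to $\alpha'$ coincide, so by Proposition~\ref{prop:comblinnulla}(f) we may write $\alpha$ and $\alpha'$ as $\frac{\alpha_nA_{n-1}+A_{n-2}}{\alpha_nB_{n-1}+B_{n-2}}$ and $\frac{\alpha'_nA_{n-1}+A_{n-2}}{\alpha'_nB_{n-1}+B_{n-2}}$ with the same $A_{n-1},A_{n-2},B_{n-1},B_{n-2}$; subtracting and using $\det\mathcal{B}_{n-1}=(-1)^n$ gives
\[
\alpha-\alpha'=\frac{(-1)^n(\alpha_n-\alpha'_n)}{(\alpha_nB_{n-1}+B_{n-2})(\alpha'_nB_{n-1}+B_{n-2})}.
\]
By Proposition~\ref{prop:comblinnulla}(e),(g) and $a_i=a'_i$ for $i\leq n$, each of the two denominator factors has $v_0$-absolute value $\prod_{j=1}^n|a_j|_{v_0}=|B_n|_{v_0}=\epsilon_n^{-1}$, hence $|\alpha-\alpha'|_{v_0}=\epsilon_n^2\,|\alpha_n-\alpha'_n|_{v_0}$ and the hypothesis is equivalent to $|\alpha_n-\alpha'_n|_{v_0}<(\epsilon_{n+1}/\epsilon_n)^2=|a_{n+1}|_{v_0}^{-2}$.

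To finish, I would apply the ultrametric inequality twice. Since the expansion of $\alpha$ has length $\geq n+1$, $\alpha_{n+1}=1/(\alpha_n-a_n)$ with $|\alpha_n-a_n|_{v_0}=|a_{n+1}|_{v_0}^{-1}$ by Proposition~\ref{prop:comblinnulla}(g); as $|\alpha_n-\alpha'_n|_{v_0}<|a_{n+1}|_{v_0}^{-2}<|a_{n+1}|_{v_0}^{-1}$ and $a'_n=a_n$, the strict ultrametric inequality gives $|\alpha'_n-a'_n|_{v_0}=|\alpha_n-a_n|_{v_0}\neq 0$. Thus $\alpha'_n\neq a'_n$, so the expansion of $\alpha'$ also has length $\geq n+1$ and $\alpha'_{n+1}=1/(\alpha'_n-a'_n)$ is defined; moreover
\[
|\alpha_{n+1}-\alpha'_{n+1}|_{v_0}=\frac{|\alpha_n-\alpha'_n|_{v_0}}{|\alpha_n-a_n|_{v_0}\,|\alpha'_n-a'_n|_{v_0}}=|a_{n+1}|_{v_0}^{2}\,|\alpha_n-\alpha'_n|_{v_0}<1,
\]
so $a_{n+1}=s(\alpha_{n+1})=s(\alpha'_{n+1})=a'_{n+1}$ by property (d), which closes the induction.

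The computation is largely mechanical once the identity for $\alpha-\alpha'$ is in place; the step I expect to require the most care is the bookkeeping of when the complete quotients of $\alpha'$ are defined (i.e.\ that the expansion of $\alpha'$ is long enough at each stage). This is exactly why one must pass through the inductive hypothesis before comparing $\alpha_n$ with $\alpha'_n$, and why one needs the sharpened estimate $|\alpha_n-\alpha'_n|_{v_0}<|a_{n+1}|_{v_0}^{-2}$ rather than merely $|\alpha_n-\alpha'_n|_{v_0}<1$ — the extra factor $|a_{n+1}|_{v_0}^{-1}$ is precisely what is consumed by the inversion $\alpha_n-a_n\mapsto\alpha_{n+1}$ in propagating the bound one further step.
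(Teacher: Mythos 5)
Your proof is correct, and it reorganizes the induction in a way that differs from the paper's even though the atomic estimate is the same. The paper peels the expansion from the front: assuming $|\alpha-\alpha'|_{v_0}<\epsilon_{n+1}(\alpha)^2$ it first gets $a_0=a_0'$ and $|a_1|_{v_0}=|a_1'|_{v_0}$, computes $|\alpha_1-\alpha'_1|_{v_0}=|a_1|_{v_0}^2|\alpha-\alpha'|_{v_0}<\epsilon_n(\alpha_1)^2$ via the recursion $\epsilon_{n+1}(\alpha)=|a_1|_{v_0}^{-1}\epsilon_n(\alpha_1)$ (equation \eqref{eq:epsilon}), and applies the inductive hypothesis to the pair $(\alpha_1,\alpha'_1)$ at level $n$; it never needs the convergent matrices. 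You instead keep the pair $(\alpha,\alpha')$ fixed, invoke the inductive hypothesis at level $n$ (legitimate, since $\epsilon_{n+1}\leq\epsilon_n$), and then jump directly to the $n$-th complete quotients via Proposition~\ref{prop:comblinnulla}(e),(f) and $\det\mathcal{B}_{n-1}=(-1)^n$, obtaining the exact identity $|\alpha-\alpha'|_{v_0}=\epsilon_n^2\,|\alpha_n-\alpha'_n|_{v_0}$ before squeezing out the $(n+1)$-st partial quotient. Your closed-form identity is just the telescoped version of the paper's one-step recursion, so the two arguments are equivalent in substance; what yours buys is (i) an explicit equality showing that the exponent $2$ in $\epsilon_n^2$ is exactly what the denominators $\prod_{j\leq n}\alpha_j$ and $\prod_{j\leq n}\alpha'_j$ consume, and (ii) a more careful treatment of the bookkeeping the paper leaves implicit, namely that $\alpha'_n\neq a'_n$ (via the strict ultrametric inequality $|\alpha_n-\alpha'_n|_{v_0}<|\alpha_n-a_n|_{v_0}$), so that the expansion of $\alpha'$ really does reach length $n+1$. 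The only cosmetic caveat is that Proposition~\ref{prop:comblinnulla}(f) is stated for $n\geq 1$, so the step $n=0\to 1$ of your induction should be read with the degenerate convention $A_{-2}=B_{-1}=0$, $A_{-1}=B_{-2}=1$, under which your identity reduces to $\alpha-\alpha'=\alpha_0-\alpha'_0$ and everything goes through.
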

\begin{proof}
We argue by induction on $n$. The claim is certainly true for $n=0$, since 
\[
|x-y|_{v_0}<1 \Leftrightarrow s(x)=s(y).
\]
Suppose now that $|\alpha-\alpha'|_{v_0}<\epsilon_{n+1}(\alpha)^2$. First, for $n=0$ we have that $a'_0=a_0$; moreover, we observe that the hypothesis implies $|\alpha-\alpha'|_{v_0}<\frac 1 {|a_1|_{v_0}}=|\alpha-a_0|_{v_0}$.  Applying the properties of the non-archimedean absolute values, we have
$$\frac 1{|a'_1|_{v_0}} =|\alpha'-a_0|_{v_0}=\max\{|\alpha'-\alpha|_{v_0},|\alpha-a_0|_{v_0}\}=|\alpha-a_0|_{v_0}=\frac 1{|a_1|_{v_0}},$$
so that $|a_1|_{v_0}=|a'_1|_{v_0}$.
By \eqref{eq:epsilon} we have
$\epsilon_{n+1}(\alpha)=\frac 1 {|a_1|_{v_0}}\epsilon_n(\alpha_1),$
hence
\begin{equation*} |\alpha_1-\alpha'_1|_{v_0} = \left| \frac 1 {\alpha-a_0} -\frac 1 {\alpha'-a_0}\right |_{v_0}
=|a_1|_{v_0}^2 |\alpha-\alpha'|_{v_0}
 <\prod_{j=2}^{n+1} \frac 1 {|a_j|_{v_0}^2}=
\epsilon_n(\alpha_1)^2.
\end{equation*}
Applying the inductive hypothesis, we have that $a_i=a'_i$ for $i=1,\ldots, n+1$, concluding the proof.
\end{proof}

The next proposition proves that, if the $n^{th}$-convergents of $\alpha$ and $\beta$ are the same, then the two numbers are $v_0$-adically close. More precisely we have the following result: 
\begin{proposition}\label{prop:dueenne}
Let $e$ be the ramification index of $K_{v_0}/\QQ_p$. Assume $\alpha,\beta\in K_{v_0}$ be such that $Q_n^\alpha=Q_n^\beta$. Then $|\alpha-\beta|_{v_0}<\frac 1 {p^{\frac {2n} e}}$.
\end{proposition}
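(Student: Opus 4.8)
The plan is to derive Proposition~\ref{prop:dueenne} from the one-sided estimate
$$|\alpha-Q_n^\alpha|_{v_0}\le \frac{1}{p^{(2n+1)/e}},$$
valid for every $\alpha\in K_{v_0}$ whose $\tau$-expansion has length $\ge n$. Granting this for both $\alpha$ and $\beta$, the hypothesis $Q_n^\alpha=Q_n^\beta$ and the ultrametric inequality give
$$|\alpha-\beta|_{v_0}=\left|(\alpha-Q_n^\alpha)+(Q_n^\beta-\beta)\right|_{v_0}\le\max\left\{|\alpha-Q_n^\alpha|_{v_0},\ |Q_n^\beta-\beta|_{v_0}\right\}\le\frac{1}{p^{(2n+1)/e}}<\frac{1}{p^{2n/e}},$$
which is the claim. (If the expansion of one of $\alpha,\beta$ stops at some step $m<n$, then that element equals its $m$-th convergent, so $Q_n$ is to be read as the element itself and the conclusion is immediate; hence I may assume both expansions have length $\ge n$.)

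To establish the estimate, suppose first that the expansion of $\alpha$ has length exactly $n$, so that $\alpha=Q_n^\alpha$ and there is nothing to prove. Otherwise $\alpha_{n+1}$ is defined, and Proposition~\ref{prop:comblinnulla}(f), applied with $n+1$ in place of $n$, gives $\alpha=\dfrac{\alpha_{n+1}A_n+A_{n-1}}{\alpha_{n+1}B_n+B_{n-1}}$. Subtracting $Q_n^\alpha=A_n/B_n$ and using $A_nB_{n-1}-A_{n-1}B_n=\det\mathcal B_n=(-1)^{n-1}$, one gets
$$\alpha-Q_n^\alpha=\frac{(-1)^{n}}{B_n\left(\alpha_{n+1}B_n+B_{n-1}\right)}.$$
By Proposition~\ref{prop:comblinnulla}(e) the inner factor of the denominator equals $\prod_{j=1}^{n+1}\alpha_j$, so Proposition~\ref{prop:comblinnulla}(g) and \eqref{eq:K} yield
$$|\alpha-Q_n^\alpha|_{v_0}=\frac{1}{|B_n|_{v_0}\prod_{j=1}^{n+1}|\alpha_j|_{v_0}}=\prod_{j=1}^{n}|a_j|_{v_0}^{-1}\cdot\prod_{j=1}^{n+1}|a_j|_{v_0}^{-1}.$$

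The final ingredient — and the only point that is not pure bookkeeping, though it is still elementary — is a uniform lower bound on the partial quotients: since $a_j\in\calY_s^1$ we have $|a_j|_{v_0}>1$, and since $e$ is the ramification index of $K_{v_0}/\QQ_p$ the value group of $|\cdot|_{v_0}$ on $K_{v_0}^\times$ is $p^{\frac1e\ZZ}$; hence $|a_j|_{v_0}\ge p^{1/e}$ for every $j\ge 1$. Plugging this into the identity above gives $|\alpha-Q_n^\alpha|_{v_0}\le p^{-n/e}\cdot p^{-(n+1)/e}=p^{-(2n+1)/e}$, which is the desired estimate. I do not anticipate any real difficulty here: the argument is assembled entirely from the identities of Proposition~\ref{prop:comblinnulla} and the computation is routine, the strictness in the final inequality of Proposition~\ref{prop:dueenne} being automatic from the ``extra'' factor $|a_{n+1}|_{v_0}^{-1}\le p^{-1/e}$. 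The only minor care needed is for expansions that terminate at or before step $n$, handled trivially as above. (Alternatively, one may write $\alpha-Q_n^\alpha=-V_n/B_n$ and use Proposition~\ref{prop:comblinnulla}(c)--(d) with \eqref{eq:K}; the bound comes out the same.)
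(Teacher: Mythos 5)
Your proof is correct, but it follows a genuinely different route from the paper's. The paper argues by induction on $n$: from $Q_n^\alpha=Q_n^\beta$ it deduces $a_0=s(\alpha)=s(\beta)$ and $Q_{n-1}^{\alpha_1}=Q_{n-1}^{\beta_1}$, then writes $\alpha-\beta=\frac{\beta_1-\alpha_1}{\alpha_1\beta_1}$ and gains the factor $p^{-2/e}$ at each step from $|\alpha_1\beta_1|_{v_0}\geq p^{2/e}$; it never mentions the convergents' explicit form. You instead prove the sharper one-sided estimate $|\alpha-Q_n^\alpha|_{v_0}\leq p^{-(2n+1)/e}$ in closed form, via the determinant identity $A_{n-1}B_n-A_nB_{n-1}=(-1)^n$ together with Proposition~\ref{prop:comblinnulla}(e),(g) and \eqref{eq:K}, and then conclude by the ultrametric inequality. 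Both arguments ultimately rest on the same fact, namely that the value group forces $|a_j|_{v_0}\geq p^{1/e}$; yours has the advantage of producing the quantitative approximation estimate $|\alpha-Q_n^\alpha|_{v_0}=\bigl(\prod_{j=1}^{n}|a_j|_{v_0}\prod_{j=1}^{n+1}|a_j|_{v_0}\bigr)^{-1}$ as a standalone byproduct (essentially the relation $|Q_n-Q_{n-1}|_{v_0}=|B_nB_{n-1}|_{v_0}^{-1}$ already noted in Section~\ref{sect:genfacts}, pushed to the limit), while the paper's induction is shorter and avoids the convergent machinery. Two cosmetic remarks: your parenthetical about expansions stopping at some $m<n$ is moot, since then $Q_n$ is undefined and the hypothesis is vacuous (the paper reads the hypothesis as ``the first $n+1$ partial quotients agree''); and in your alternative closing remark, note that Proposition~\ref{prop:comblinnulla}(c) as printed has a sign error in the exponent --- it is (d) that gives $|V_n|_{v_0}=\prod_{j=1}^{n+1}|a_j|_{v_0}^{-1}$, which is what your computation uses.
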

\begin{proof}
 Notice that the hypothesis $Q_n^\alpha=Q_n^\beta$ is equivalent to say that the first $n+1$ partial quotients of $\alpha$ and $\beta$ are equal. We argue by induction on $n$. The claim is certainly true for $n=0$, so assume that $n\geq 1$ and $Q_n^\alpha=Q_n^\beta$. This implies that $a_0=s(\alpha)=s(\beta)$ and $Q_{n-1}^{\alpha_1}=Q_{n-1}^{\beta_1}$, so that $|\alpha_1-\beta_1|_{v_0}<\frac 1 {p^{\frac {2(n-1)} e}}$. Since $|\alpha_1\beta_1|_{v_0}\geq p^{\frac 2 e}$, we have
 $$|\alpha-\beta|_{v_0}=\left |a_0+\frac 1{\alpha_1}-a_0 - \frac 1 {\beta_1}\right |_{v_0}=\frac{|\alpha_1-\beta_1|_{v_0}} {|\alpha_1|_{v_0}|\beta_1|_{v_0}}< \frac 1 {
 p^{\frac {2(n-1)+2} e}}= \frac 1 {p^{\frac {2n}e}},$$
 proving the claim.
\end{proof}

\section{Finiteness and periodicity}
Let $\tau=(K,\mathfrak{P},s)$ be a type. It is clear by the construction that every finite continued fraction expansion of type $\tau$ represents an element in $K$, but the converse is not true in general. Motivated by a question of Rosen in the archimedean case, we are interested in giving necessary and sufficient conditions such that a type satisfies the property that every element of $K$ has finite continued fraction expansion of type $\tau$.

Following \cite{MVV2020}, where the authors adress Rosen problem in the archimedean case, we introduce the following definitions.
\begin{definition}
\begin{itemize} \item[a)]  
We say that $\tau$ satisfies the  \emph{Continued Fraction Finiteness  property ($\CFF$)}  (resp. the \emph{Continued Fraction Periodicity  property ($\CFP$)}) if every $\alpha\in K$ has a finite (resp. finite or periodic) $\tau$-expansion. 
\item[b)] We say that the field $K$ satisfies the \emph{$\mathfrak{P}$-adic Continued Fraction Finiteness  property ($\CFF$)} (resp. the \emph{$\mathfrak{P}$-adic Continued Fraction Periodicity property ($\CFP$)}) if there is a type $\tau=(K,\mathfrak{P},s)$  satisfying the  $\CFF$ (resp. $\CFP$) property.
\end{itemize}
\end{definition}
In the next sections, we show that the Browkin and Ruban continued fractions satisfy the $\CFF$ and $\CFP$ property, respectively. 

\subsection{Browkin expansion of rational numbers}
In \cite[\S 3]{Browkin1978} Browkin proved that, for every odd prime $p$, every rational number has a finite Browkin $p$-adic continued fraction expansion. Although not explicitly stated, the proof also gives a quantitative estimate for the length $n$ of the expansion; namely, if $\alpha=p^s\frac x y$ with $s,x,y\in\ZZ$ and $(x,y)=1$, then $n$ does not exceed $|x|_\infty+2|y|_\infty+1$. We present here a slightly different proof improving the bound for $n$.
\begin{lemma}\label{lem:lemmaapprox2} Let $(t_n)_{n\in \NN} $ be a sequence of real numbers $\geq 0$, such that there exist   $c_0, c_1\in\mathbb{R}_{>0}$   satisfying 
$$t_{n+2} < c_{1}t_{n+1}+ c_{0}t_n, $$
and let $\tilde x$ be the (unique) positive real root of the polynomial
\begin{equation*} f(X)=X^{2}-c_1X-c_0. \end{equation*}
Then, 
\begin{itemize}
    \item[a)] $ |t_n|_\infty \leq \max\{t_0, \frac{t_1}{\tilde x}\}\cdot \tilde x^n$  for every $n\in\NN$;
\item[b)] if $c_0+c_1<1$, we have that $|t_n|_\infty \to 0.$
\end{itemize}
\end{lemma}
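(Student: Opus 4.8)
The plan is to treat Lemma~\ref{lem:lemmaapprox2} as a standard comparison argument between the given recursive inequality and the linear recurrence associated to $f(X)=X^2-c_1X-c_0$. First, I note that $f$ has exactly one positive real root $\tilde x$: indeed $f(0)=-c_0<0$ and $f$ is eventually positive, so by continuity there is a positive root, and since $f$ is a quadratic opening upwards with $f(0)<0$ its two real roots (they are real because the discriminant $c_1^2+4c_0>0$) have opposite signs, giving uniqueness of the positive one. (Here I would silently correct the apparent typo $\tilde y$ in the statement of $M$ to $\tilde x$, since that is the quantity just introduced.)

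For part~(a) I would argue by strong induction on $n$. The base cases $n=0$ and $n=1$ are immediate from the definition $M=\max\{t_0,\,t_1/\tilde x\}$: we get $t_0\le M=M\tilde x^0$ and $t_1\le M\tilde x=M\tilde x^1$. For the inductive step, assuming $t_k\le M\tilde x^k$ for all $k\le n+1$, I use the hypothesis $t_{n+2}<c_1 t_{n+1}+c_0 t_n\le c_1 M\tilde x^{n+1}+c_0 M\tilde x^n = M\tilde x^n(c_1\tilde x+c_0)=M\tilde x^n\cdot\tilde x^2=M\tilde x^{n+2}$, where the crucial identity $c_1\tilde x+c_0=\tilde x^2$ is exactly the statement that $\tilde x$ is a root of $f$. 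This closes the induction (note the strict inequality only improves things, so $\le$ is safe to propagate).

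For part~(b), the point is that when $c_0+c_1<1$ we have $f(1)=1-c_1-c_0>0$ while $f(0)=-c_0<0$, so the positive root satisfies $\tilde x\in(0,1)$. Then $M\tilde x^n\to 0$ as $n\to\infty$, and since $0\le t_n\le M\tilde x^n$ by part~(a), the squeeze theorem gives $t_n\to 0$.

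I do not expect a genuine obstacle here: the only subtlety is bookkeeping around which root of $f$ is meant and making sure the induction uses both $t_{n+1}$ and $t_n$ simultaneously (hence strong induction with two base cases, rather than ordinary induction). The identity $\tilde x^2=c_1\tilde x+c_0$ is what makes the bound $M\tilde x^n$ exactly self-reproducing, so that is the one place where the specific choice of $\tilde x$ (as opposed to any upper bound for it) is used.
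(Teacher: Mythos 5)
Your proof is correct and follows essentially the same route as the paper's: part (a) by induction using $\tilde x^2=c_1\tilde x+c_0$, and part (b) by observing $f(1)=1-(c_0+c_1)>0$ forces $0<\tilde x<1$ so the bound from (a) tends to $0$. Your reading of $\tilde y$ as a typo for $\tilde x$ is also the right one.
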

\begin{proof} The first assertion follows easily applying an induction on $n$. To prove part $b)$, first notice that, since $\tilde x>0$ and 
$$\tilde x =c_1 +\frac  {c_{0}}{\tilde x},$$
then $c_1<\tilde x$. If $c_0+c_1<1$, then $f(1)=1-(c_0+c_1)>0$, so that $0<\tilde x <1$, and we can conclude that $c_1<\tilde x<1$. Therefore,
\begin{equation*}\tilde x <1 \Leftrightarrow c_1+c_0 <1. \end{equation*}
Then, the claim follows from part $a)$.\end{proof}

\begin{proposition}\label{prop:Browkinfin}
Let $\alpha \in\QQ$, and write
$\alpha=\frac{x_0}{y_0}$,
with $y_0\in\ZZ$ not divisible by $p$ and $x_0\in\ZZ\left[\frac 1 p\right]$. Then, the Browkin $p$-adic continued fraction expansion of $\alpha$ is finite, and its length is strictly bounded by $$\frac{\log(\max\{|x_0|_\infty, |y_0|_\infty\})}{\log\left(\frac{p(\sqrt{p^2+16}-p)}{4}\right)}.$$
\end{proposition}
\begin{proof}  Consider the sequence $(V_n)_{n\in \NN}$ defined by \eqref{eq:Vn} and put $y_{n+1}=y_0V_n$. By Proposition \ref{prop:comblinnulla} $c)$ we have $y_n\in\ZZ[\frac 1 p]\cap p^n\ZZ_p=p^{n}\ZZ$. Moreover, by Proposition \ref{prop:comblinnulla} $a$) the $y_n$'s satisfy the recurrence 
$$ y_{n+1}=a_{n}y_{n}+y_{n-1}, $$ 
with $|a_n|_\infty <\frac p 2$. Then, $\frac {y_n}{p^n}\in\ZZ$ 
and, applying Lemma \ref{lem:lemmaapprox2}, we have
$$
    \frac {|y_{n}|_\infty } {p^{n}} < \frac 1 2 \frac {|y_{n-1}|_\infty } {p^{n-1}} + \frac 1 {p^2} \frac {|y_{n-2}|_\infty } {p^{n-2}}\\
 < M \tilde x^n,$$
  where  $M=\max\left\{|y_{0}|_\infty,\frac {|y_{1}|_\infty } {p} \right\}$ and $\tilde x=\frac 1 {4p} (\sqrt{p^2+16}+p)$ is the positive root of the polynomial $X^2-\frac 1 2 X- \frac 1 {p^2}$.
It follows that $y_{n}=0$ for $\tilde x^n \leq \frac 1 {M}$.
Furthermore we have
$$
     \frac{|y_{1}|_\infty} {p}= \frac 1 p |x_0-a_0y_0|_\infty < \frac 1 p {|x_0|_\infty} + \frac 1 2  |y_0|_\infty,
    $$
therefore 
$$ M<\max\left \{ |y_0|_\infty, \frac 12 (|x_0|_\infty+|y_0|_\infty)\right \}\leq \max\{ |x_0|_\infty,|y_0|_\infty
\},$$ 
so that 
     $y_{n}=0$ for $\tilde x^n \leq \frac 1 {\max\{ |x_0|_\infty,|y_0|_\infty
\}}$, that is for $ n \geq  -\frac{\log(\max\{ |x_0|_\infty,|y_0|_\infty
\})}{\log(\tilde x^{-1})}$, as wanted.
    \end{proof}

\subsection{Ruban expansion of rational numbers}
In \cite{Ruban1970}, Ruban introduced a $p$--adic continued fraction corresponding to the type $\tau_R=(\QQ,p,\{0,\ldots, p-1\})$, proving that the continued fraction expansions coming from this type enjoy nice ergodic properties. However, it is easy to see that the Ruban type cannot satisfy $\CFF$, since negative rational numbers cannot have a terminating Ruban continued fraction. In this setting, Laohakosol \cite{Laohakosol1985} and, independently, Wang \cite{Wang1985} proved that $\tau_R$ satisfies $\CFP$, proving in particular that, if a rational number as non-terminating $\tau_R$-expansion, then the tail is equal to $\left[ \overline{1-\frac{1}{p}}\right]$. However, none of these arguments were effective; more recently, in \cite{CapuanoVenezianoZannier2019} the authors gave a quantitative estimate for the length of the expansion in case this is finite, and an estimate on the length of the pre-periodic part in terms of the height of the rational number.

\subsection{The main tools}
For $x\in\CC$, we define
$$\theta(x)=\frac 1 2(|x|_\infty +\sqrt{|x|_\infty^2+4});$$
then, we have the following inequality: 
$$|x|_\infty\leq \theta(x)\leq |x|_\infty+1, $$
and the map $\theta$ is a bijection from $[0,+\infty)$ to $[1,+\infty)$ whose inverse is given by $y\mapsto y-\frac 1 y$. In this section we will prove that, given a type $(K, \mathfrak{P}, \tau)$, a suitable bound involving the values of $\theta$ on the elements in the image of $s$ and on their conjugates will guarantee that the type satisfies the CFF (resp. CFP) property.

We first need the following lemma:
\begin{lemma}\label{lem:matrixnorm} Let $(a_n)_{n\ge 0}$ be any sequence of complex numbers and let $(v_n)_{n \ge -1}$ be a sequence of complex numbers satisfying, for every $n\ge 2$ the recurrence formula:
$$v_n=a_nv_{n-1}+v_{n-2}.$$
Then for every $n\geq 0$,
$$\max\{|v_n|_\infty, |v_{n-1}|_\infty\}\leq \sqrt{|v_n|^2_\infty+|v_{n-1}|_\infty^2} \leq \sqrt{|v_0|^2_\infty+|v_{-1}|_\infty^2}\cdot \prod_{j=1}^n \theta(a_j) .$$
\end{lemma}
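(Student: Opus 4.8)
The plan is to prove the matrix-norm bound by packaging the two-term recurrence $v_n = a_n v_{n-1} + v_{n-2}$ into a product of $2\times 2$ matrices, and then controlling the operator norm of each factor by $\theta(a_j)$. Concretely, set $w_n = \begin{pmatrix} v_n \\ v_{n-1}\end{pmatrix}$, so that $w_n = M_n w_{n-1}$ with $M_n = \begin{pmatrix} a_n & 1 \\ 1 & 0\end{pmatrix}$, and hence $w_n = M_n M_{n-1}\cdots M_2 \, w_1$. Fixing any submultiplicative norm $\|\cdot\|$ on $2\times 2$ complex matrices that dominates the sup-norm on the column vector (e.g. the operator norm induced by $|\cdot|_\infty$ on $\CC^2$), we get $\sup\{|v_n|_\infty, |v_{n-1}|_\infty\} = \|w_n\|_\infty \le \|w_n\| \le \|w_1\|\prod_{j=2}^n \|M_j\|$. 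So it suffices to show $\|M_j\| \le \theta(a_j)$, and then absorb $\|w_1\|$ together with the missing factor $\theta(a_1)$ (or rather $\theta(a_1)^{-1}$, which is bounded) into the constant $c$; note the constant is allowed to depend on the whole sequence $(a_n)$, so there is a lot of slack, and one can even simply take $c = \|w_1\| = \sup\{|v_1|_\infty,|v_0|_\infty\}$ if one checks the $n=0,1$ cases separately.

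The key computational step is the eigenvalue estimate for $M_j$. Since $M_j$ is symmetric (with complex entries it is not Hermitian, so one should be slightly careful), the cleanest route is to observe that the characteristic polynomial of $\begin{pmatrix} a & 1 \\ 1 & 0\end{pmatrix}$ is $\lambda^2 - a\lambda - 1$, whose roots $\lambda_{\pm} = \tfrac12(a \pm \sqrt{a^2+4})$ satisfy $|\lambda_+| \le \theta(a)$ and $|\lambda_-| = 1/|\lambda_+| \le 1 \le \theta(a)$ by the triangle inequality $|\tfrac12(a+\sqrt{a^2+4})| \le \tfrac12(|a|_\infty + |\sqrt{a^2+4}|_\infty) \le \tfrac12(|a|_\infty + \sqrt{|a|_\infty^2+4}) = \theta(a)$, using $|a^2+4| \le |a|^2+4$. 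The spectral radius alone does not bound the operator norm, but for this explicit family one can either work with a fixed matrix norm like the Frobenius norm, for which $\|M_j\|_F = \sqrt{|a_j|_\infty^2 + 2} \le \sqrt{|a_j|^2 + 4} \le \theta(a_j) + 1$ — which is not quite $\theta(a_j)$ — or, more simply, accept a constant factor: since any two matrix norms are equivalent up to a constant depending only on the dimension $2$, there is a universal $C$ with $\|M_j\| \le C\,\rho(M_j) \cdot(\text{something bounded})$. Actually the simplest fix is to use the operator norm and bound it directly: $\|M_j v\|_\infty \le \max\{|a_j v_1 + v_2|, |v_1|\} \le \max\{|a_j| + 1, 1\}\|v\|_\infty = (|a_j|_\infty+1)\|v\|_\infty \le (\theta(a_j)+1)\|v\|_\infty \le 2\theta(a_j)\|v\|_\infty$ since $\theta(a_j)\ge 1$; then $\prod 2\theta(a_j) = 2^n\prod\theta(a_j)$, which is still not of the claimed form because of the $2^n$.

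So the genuinely careful point — and I expect this to be the main obstacle — is getting the clean bound $\prod_{j=1}^n \theta(a_j)$ with no exponential overhead in $n$. The way to do this honestly is \emph{not} to bound each $\|M_j\|$ separately, but to note that $\theta$ was designed precisely so that $\theta(a)$ is the larger root of $X^2 - |a|_\infty X - 1$, hence the sequence $u_n := \sup\{|v_n|_\infty,|v_{n-1}|_\infty\}$ itself satisfies $u_n \le |a_n|_\infty u_{n-1} + u_{n-2}$ (from $|v_n|_\infty \le |a_n|_\infty|v_{n-1}|_\infty + |v_{n-2}|_\infty \le |a_n|_\infty u_{n-1} + u_{n-2}$ and $|v_{n-1}|_\infty \le u_{n-1} \le |a_n|_\infty u_{n-1} + u_{n-2}$). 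One then proves by induction on $n$ that $u_n \le c\prod_{j=1}^n \theta(a_j)$ with $c = u_0 = \sup\{|v_0|_\infty,|v_{-1}|_\infty\}$ (adjusting for the $n=1$ base case): the inductive step $c\prod_{j=1}^n\theta(a_j) \ge |a_n|_\infty \cdot c\prod_{j=1}^{n-1}\theta(a_j) + c\prod_{j=1}^{n-2}\theta(a_j)$ reduces, after dividing by $c\prod_{j=1}^{n-2}\theta(a_j)$, to $\theta(a_{n-1})\theta(a_n) \ge |a_n|_\infty\theta(a_{n-1}) + 1$, i.e. $\theta(a_n)\ge |a_n|_\infty + 1/\theta(a_{n-1})$, which holds because $\theta(a_n) = |a_n|_\infty + 1/\theta(a_n)$ by the defining identity and $\theta(a_{n-1}) \ge 1 \ge$... wait, that gives $1/\theta(a_{n-1}) \le 1 \le$ — one needs $\theta(a_n) \ge |a_n|_\infty + 1/\theta(a_{n-1})$, and since $\theta(a_n) = |a_n|_\infty + 1/\theta(a_n)$ it suffices that $\theta(a_n) \le \theta(a_{n-1})$, which is false in general; so the monotone induction must instead carry the stronger hypothesis along both coordinates, tracking $|v_n|_\infty \le c\prod_{j=1}^{n}\theta(a_j)$ and $|v_{n-1}|_\infty \le c\prod_{j=1}^{n-1}\theta(a_j)$ simultaneously, and then the step uses only $\theta(a_n) = |a_n|_\infty + 1/\theta(a_n) \ge |a_n|_\infty$ together with $1 \le \prod_{j}\theta(a_j)/\big(\theta(a_n)\prod_{j\le n-2}\theta(a_j)\big) \cdot \theta(a_n)$... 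This bookkeeping is exactly the routine-but-fiddly part I would write out in full; the conceptual content is just "$\theta(a)$ is the dominant root of $X^2-|a|_\infty X-1$, so it dominates products governed by that recursion," and everything else is careful indexing of the base cases $n=0,1$ and absorbing initial data into $c$.
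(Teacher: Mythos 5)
Your setup is the right one (the paper also packages the recurrence as a product of the matrices $\mathcal{A}_j=\begin{pmatrix}a_j&1\\1&0\end{pmatrix}$), but the proof as written has a genuine gap: you never produce a submultiplicative norm whose value on $\mathcal{A}_j$ is exactly $\theta(a_j)$, and every candidate you try fails, as you yourself note — the spectral radius does not control the norm of a product, the Frobenius norm $\sqrt{|a|^2+2}$ is strictly larger than $\theta(a)$, and the $\ell^\infty$ (or $\ell^1$) operator norm equals $|a|+1$, which costs an inadmissible factor $2^n$. Your fallback, a scalar induction on $u_n=\sup\{|v_n|,|v_{n-1}|\}$ (or on both coordinates separately), reduces in either form to the inequality $\theta(a_n)\geq |a_n|+1/\theta(a_{n-1})$, which by the identity $\theta(a_n)=|a_n|+1/\theta(a_n)$ is equivalent to $\theta(a_n)\leq\theta(a_{n-1})$ and is false whenever $|a_n|>|a_{n-1}|$; so that induction does not close no matter how the base cases and the constant $c$ are arranged. (Concretely, with $v_{-1}=v_0=1$, $a_1=0$, $a_2=10$ one gets $v_2=11>\theta(0)\theta(10)\approx 10.1$, so the coordinatewise invariant with $c=\sup\{|v_0|,|v_{-1}|\}$ is simply not preserved.) The trailing sentences of your argument acknowledge the difficulty but do not resolve it.

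The missing ingredient, which is the whole point of the paper's proof, is the choice of the operator norm induced by the \emph{Euclidean} norm on $\CC^2$. This norm is submultiplicative and satisfies $\|M\|=\sqrt{|\gamma|}$, where $\gamma$ is the dominant eigenvalue of $MM^*$. For $M=\begin{pmatrix}a&1\\1&0\end{pmatrix}$ one has
\begin{equation*}
MM^*=\begin{pmatrix}|a|^2+1 & a\\ \bar a & 1\end{pmatrix},\qquad \det(\lambda I-MM^*)=\lambda^2-(|a|^2+2)\lambda+1,
\end{equation*}
whose larger root is $\tfrac12\bigl(|a|^2+2+|a|\sqrt{|a|^2+4}\bigr)=\theta(a)^2$, so $\|M\|=\theta(a)$ \emph{exactly}, with no loss. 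Then $\bigl\|\begin{pmatrix}v_n\\v_{n-1}\end{pmatrix}\bigr\|\leq\theta(a_n)\bigl\|\begin{pmatrix}v_{n-1}\\v_{n-2}\end{pmatrix}\bigr\|$ iterates cleanly to give the claim with $c=\bigl\|\begin{pmatrix}v_0\\v_{-1}\end{pmatrix}\bigr\|$, and since the Euclidean norm of the vector dominates the sup of its coordinates, the stated bound follows. Note that the Euclidean norm is essential here: it is the unique one among the standard choices for which the invariant is preserved step by step, which is exactly why your coordinatewise bookkeeping could not be made to work.
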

\begin{proof}
For any complex matrix $M$, let us consider the operator norm 
\begin{align*}
||M|| &= \sup_{{\bf v}\not={\bf 0}} \frac {||M{\bf v}||}{||{\bf v}||}
\end{align*}
where $||{\bf v}||$ denotes the Euclidean norm of a complex vector. The following facts are well known (see for example \cite[Chapter 5]{Horn2013}):
\begin{itemize}
    \item $||M_1\cdot M_2||\leq ||M_1||\cdot ||M_2||$;
    \item  $||M||=\sqrt{|\gamma|_\infty}$, where $\gamma$ is the dominant eigenvalue of $M\cdot M^*$ (here $M^*$ denotes the transpose conjugate of $M$).
\end{itemize}
In particular we see that, for every $a\in\CC$,
\begin{equation*}
\left |\left | \begin{pmatrix} a & 1\\ 1 & 0\end{pmatrix} \right |\right |=\theta(a).\end{equation*} 
 Let $\mathcal{A}_n$ be the matrix defined as in \eqref{eq:matriciA};
then, for every $n\geq 1$,
$$ \left|\left | \begin{pmatrix} v_n\\v_{n-1}\end{pmatrix} \right|\right |= \left|\left |\mathcal{A}_n\begin{pmatrix} v_{n-1}\\v_{n-2}\end{pmatrix}\right|\right |\leq \left|\left |\mathcal{A}_n \right |\right| \left|\left | \begin{pmatrix} v_{n-1}\\v_{n-2}\end{pmatrix}\right|\right|=\theta(a_n)\left|\left | \begin{pmatrix} v_{n-1}\\v_{n-2}\end{pmatrix}\right|\right|,$$
so that 
$$ \max\{|v_n|_\infty, |v_{n-1}|_\infty\}\leq \left|\left | \begin{pmatrix} v_n\\v_{n-1}\end{pmatrix} \right|\right |\leq \left|\left | \begin{pmatrix} v_0\\v_{-1}\end{pmatrix} \right|\right |\cdot \prod_{j=1}^n \theta(a_j).$$
\end{proof}

\begin{theorem}\label{teo:FCcondition}
Let $\tau=(K,\mathfrak{P},s)$ be a type. Let $\Sigma$  be the set of embeddings of $K$ in $\CC$, and let us denote by 
$$\nu_\tau=\sup\left\{\frac{\prod_{\sigma\in\Sigma}\theta(a^\sigma)} { |a|_{v_0}^{d_{v_0}}} \ |\ a\in\calY_s, |a|_{v_0}>1\right \}. $$
Then, \begin{itemize}
    \item[a)] if $\nu_\tau\leq 1$, then $\tau$ satisfies $\CFP$;
    \item[b)] if $\nu_\tau< 1$, then then $\tau$ satisfies $\CFF$. 
\end{itemize}
\end{theorem}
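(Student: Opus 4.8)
The plan is to control the archimedean sizes of the denominators $B_n$ (and of the "error terms" $V_n$) along a continued fraction expansion of type $\tau$, using the multiplicative relation between the archimedean norm $\prod_{\sigma}\theta(a^\sigma)$ of a partial quotient and its $v_0$-adic valuation encoded by $\nu_\tau$, and then invoke Northcott's theorem (Proposition \ref{prop:propheight}(e)) to force the expansion to terminate or cycle. Concretely, fix $\alpha\in K$ with expansion $[a_0,a_1,\ldots]$ and consider the sequence $(V_n)$ of \eqref{eq:Vn}, which satisfies $V_n=a_nV_{n-1}+V_{n-2}$ by Proposition \ref{prop:comblinnulla}(a). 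Applying Lemma \ref{lem:matrixnorm} to the embedding $\sigma$ gives, for each $\sigma\in\Sigma$, a constant $c_\sigma>0$ with $|\sigma(V_n)|_\infty\le c_\sigma\prod_{j=1}^n\theta(a_j^\sigma)$; here one must be a little careful, since $a_j\in K$ but the recurrence relation $V_n=a_nV_{n-1}+V_{n-2}$ and its conjugates hold in each completion, so the lemma applies after embedding everything into $\CC$ via $\sigma$.

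Next I would combine these estimates multiplicatively over all $d=|\Sigma|$ embeddings and compare with the $v_0$-adic size of $V_n$, which by Proposition \ref{prop:comblinnulla}(c) equals $\prod_{j=1}^{n+1}|a_j|_{v_0}$. The point is that
\[
\prod_{\sigma\in\Sigma}|\sigma(V_n)|_\infty \;\le\; \Big(\prod_{\sigma}c_\sigma\Big)\prod_{j=1}^n\prod_{\sigma\in\Sigma}\theta(a_j^\sigma)\;\le\; C\,\prod_{j=1}^n\big(\nu_\tau\,|a_j|_{v_0}^{d_{v_0}}\big),
\]
by the definition of $\nu_\tau$, since each $a_j\in\calY_s^1$ for $j\ge1$. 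Meanwhile, the nonarchimedean places other than $v_0$ contribute factors $\le1$ because $|s(\alpha)|_v\le1$ for every nonarchimedean $v\ne v_0$ (floor function property (b)), and one checks inductively that the same bound propagates to $V_n$ (or, more cleanly, to $B_n$, which lies in $\calo_{K,\{v_0\}}$; it is probably cleanest to run the whole argument with $B_n$ rather than $V_n$, using \eqref{eq:K} in place of Proposition \ref{prop:comblinnulla}(c)). Feeding everything into the product formula $\prod_{v}|x|_v^{d_v}=1$ for $x=V_n$ (assuming $V_n\ne0$), the $v_0$-factor $|V_n|_{v_0}^{d_{v_0}}$ cancels against the $\prod_j|a_j|_{v_0}^{d_{v_0}}$ coming from the archimedean bound, and one is left with an inequality of the shape $1\le C\,\nu_\tau^{\,n}$ (up to a harmless bounded factor absorbing the discrepancy between $\prod_{j=1}^n$ and $\prod_{j=1}^{n+1}$, and between $d_{v_0}$ and the full product over places above $p$).

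From here the two cases fall out. If $\nu_\tau<1$, then $C\nu_\tau^n<1$ for $n$ large, contradicting $V_n\ne0$; hence $V_n=0$ for some $n$, which means $\alpha=A_n/B_n=Q_n$, i.e. the expansion is finite — this is case (b), $\CFF$. If $\nu_\tau\le1$, the inequality $1\le C\nu_\tau^n\le C$ shows $\prod_\sigma|\sigma(V_n)|_\infty$ is bounded independently of $n$, and combined with the nonarchimedean control this bounds the Weil height $H(V_n)$ uniformly; since all $V_n$ lie in the fixed number field $K$ (bounded degree), Northcott's theorem gives only finitely many distinct values among the $V_n$, hence $V_n=V_m$ for some $n<m$. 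Translating back through Proposition \ref{prop:comblinnulla}(b) (which expresses $\alpha_{n+1}$ in terms of $V_n,V_{n-1}$) one deduces that the tail of complete quotients, and therefore the sequence of partial quotients, is eventually periodic — this is case (a), $\CFP$. The main obstacle I anticipate is the bookkeeping in the second step: matching up the normalizations $d_v$ versus $d_{v_0}$, handling the possibility of several places of $K$ above $p$ correctly in the product formula, and verifying cleanly that the nonarchimedean-at-$v\ne v_0$ contributions really are $\le1$ for all $V_n$ (equivalently, choosing to work with $B_n\in\calo_{K,\{v_0\}}$ to make this transparent); the periodicity deduction in case (a) also needs a short argument that equality of a single $V_n$ with a later $V_m$ — together with equality of the preceding one — propagates to equality of all subsequent complete quotients.
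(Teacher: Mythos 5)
Your argument for part (b) is essentially correct and is really the same computation as the paper's: the paper forms $H(\alpha_{n+1})^d=\prod_v\sup\{|V_n|_v,|V_{n-1}|_v\}^{d_v}$, which already incorporates the product formula that you invoke explicitly on $V_n$; both routes yield $1\le C_\alpha\,\nu_\tau^{\,n}$ (resp. $H(\alpha_{n+1})^d\le C\nu_\tau^n H(\alpha)^d$) and hence termination when $\nu_\tau<1$. Your handling of the finite places $v\ne v_0$ is also fine as written for $V_n$ (you get $|V_n|_v\le\sup(1,|\alpha|_v)$, a constant depending only on $\alpha$), so the detour through $B_n$ is unnecessary.

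Part (a), however, has a genuine gap. You claim that for $\nu_\tau\le 1$ the quantity $\prod_{\sigma}|\sigma(V_n)|_\infty$ is bounded independently of $n$ and that consequently $H(V_n)$ is uniformly bounded, so that Northcott applies to the sequence $(V_n)$. This is false: Lemma \ref{lem:matrixnorm} only gives $\prod_\sigma|\sigma(V_n)|_\infty\le C\,\nu_\tau^n\prod_{j=1}^n|a_j|_{v_0}^{d_{v_0}}$, and the factor $\prod_{j=1}^n|a_j|_{v_0}^{d_{v_0}}\ge p^{nd_{v_0}/e}$ does \emph{not} cancel unless you multiply by $|V_n|_{v_0}^{d_{v_0}}$. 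Indeed the product formula forces $\prod_\sigma|\sigma(V_n)|_\infty\ge c_\alpha\,|V_n|_{v_0}^{-d_{v_0}}=c_\alpha\prod_{j=1}^{n+1}|a_j|_{v_0}^{d_{v_0}}\to\infty$, and likewise $H(V_n)=H(1/V_n)\ge |V_n|_{v_0}^{-d_{v_0}/d}\to\infty$; the $V_n$ take infinitely many values and Northcott gives nothing. The cancellation you need happens only in the \emph{ratio}: the paper applies Northcott to the complete quotients $\alpha_{n+1}=-V_{n-1}/V_n$, for which $H(\alpha_{n+1})^d=\prod_v\sup\{|V_n|_v,|V_{n-1}|_v\}^{d_v}\le C\,\nu_\tau^n H(\alpha)^d$ is bounded when $\nu_\tau\le1$. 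This also disposes of your secondary worry about propagating periodicity: once two complete quotients coincide, $\alpha_m=\alpha_n$, the tail of the expansion repeats automatically because the algorithm \eqref{eq:alpharel} is deterministic in $\alpha_n$, with no need to match consecutive pairs of $V$'s.
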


\begin{proof}
Let $\alpha\in K$; we notice that, for every $\sigma\in\Sigma$, the sequence $V_n^{\sigma}$ satisfies the recurrence formula
$$V_n^{\sigma}=a_n^\sigma V_{n-1}^\sigma +V_{n-2}^\sigma,$$
for every $n\ge 1$, so by Lemma \ref{lem:matrixnorm}, there exists a suitable $C>0$ (depending on $\alpha$) such that
\begin{equation} \label{eq:theta_est}
\prod_{\sigma\in\Sigma}\sup\{|V_n^\sigma|_\infty, |V_{n-1}^\sigma|_\infty\}\leq C\prod_{j=1}^n \prod_{\sigma\in \Sigma} \theta(a_j^\sigma).
\end{equation}
Since $V_n\alpha_{n+1}+V_{n-1}=0$, and recalling that $V_n\in K$, we have that
\begin{align*}
    H(\alpha_{n+1})^d & = 
    H\left(-\frac{V_{n-1}}{V_n}\right)^d\\
    &= \prod_{v\in \calM_K}\sup\left \{\left |\frac{V_{n-1}}{V_n}\right |_v^{d_v},1\right \},\\
    &=\prod_{v\in \calM_K}\sup\{|V_n|_v^{d_v},|V_{n-1}|_v^{d_v}\}.
\end{align*}
Using that, for every non archimedean place $v\not=v_0$ we have that 
\[
|V_n|_v\leq \max\{|A_n|_v,|B_n|_v|\alpha|_v\}\leq \max\{|\alpha|_v,1\},
\]
and that $|V_n|_{v_0}<|V_{n-1}|_{v_0}$, we can bound from above the previous quantity obtaining
\begin{align*}
        H(\alpha_{n+1})^d&\leq H(\alpha)^d\cdot  |V_{n-1}|_{v_0}^{d_{v_0}}  \prod_{\sigma \in \Sigma} \max\{|V^\sigma _n|_\infty,|V_{n-1}^\sigma|_\infty \},\\
    \intertext{and, applying \eqref{eq:theta_est}, we have}
    &\leq C\cdot H(\alpha)^d \cdot |V_{n-1}|_{v_0}^{d_{v_0}} \prod_{j=1}^n \prod_{\sigma\in \Sigma} \theta(a_j^\sigma)\\
    &\leq C\cdot H(\alpha)^d\cdot |V_{n-1}|_{v_0}^{d_{v_0}} \nu_\tau^n  \prod_{j=1}^n|a_j|_{v_0}^{d_{v_0}}.
    \end{align*}
    Since $V_{n-1}=(-1)^n\prod_{j=1}^{n} \frac 1{\alpha_j} $ by Proposition \ref{prop:comblinnulla}, we have
    $|V_{n-1}|_{v_0}= \prod_{j=1}^{n} \left |\frac 1{a_j}\right |_{v_0} $, therefore
    $$ H(\alpha_{n+1})^d \leq C\,\nu_\tau^n \cdot H(\alpha)^d.$$
    Let us suppose that $\nu_\tau\leq 1$; by Northcott finiteness theorem, the $\alpha_n$'s vary in a finite set, so that either the continued fraction expansion of type $\tau$ for $\alpha$ is finite or there exist $m,n\in\NN$ such that $\alpha_m=\alpha_n$, so that the expansion is periodic. This proves part a). As for b), we see that if   $\nu_\tau< 1$ then  either $\alpha$ has a finite expansion or $H(\alpha_n)$ becomes eventually zero, and the latter is a contradiction.\end{proof}

\subsection{A particular case: the $\CFF$ and $\CFP$ properties for special types}

When the type $\tau=(K, \pi, \mathcal R)$ is special, i.e. the floor function is essentially given by the choice of a generator of $\mathfrak{P}$ and by the choice of a set of representatives $\calR$ as explained in Section \ref{sec:special_type}, Theorem \ref{teo:FCcondition} gives a more explicit criterion to detect $\CFP$ and $\CFF$ properties, which is the following result.
\begin{theorem}\label{cor:FEC} Let $\tau=(K,\pi,\mathcal{R})$ be a special type, and let $\Sigma$  be the set of embeddings of $K$ in $\CC$. For every $\sigma\in\Sigma$, let $L_{\sigma}=\max\{|c^\sigma|_\infty \ |\ c\in\mathcal{R}\}$, and $\lambda_{\sigma}=|\pi^\sigma|_\infty$.
Assume that, for every $\sigma\in \Sigma$, 
$$\lambda_{\sigma}>1 \hbox{ and } L_\sigma\leq (\lambda_\sigma-1)\left(1-\frac 1 {\lambda_\sigma^2}\right);  $$
then,
\begin{itemize} 
\item[a)]  $\tau$ satisfies the $\CFP$ property;
\item[b)] if moreover
$L_\sigma<(\lambda_\sigma-1)\left(1-\frac 1 {\lambda_\sigma^2}\right)$ for at least one $\sigma$, 
then $\tau$ satifies the $\CFF$ property.
\end{itemize}
\end{theorem}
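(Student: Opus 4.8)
The plan is to deduce Theorem \ref{cor:FEC} from Theorem \ref{teo:FCcondition} by showing that the hypotheses on $L_\sigma$ and $\lambda_\sigma$ force $\nu_\tau \le 1$ (resp. $\nu_\tau < 1$). So the whole argument reduces to estimating, for each $a \in \calY_s^1$, the ratio $\prod_{\sigma\in\Sigma}\theta(a^\sigma) / |a|_{v_0}^{d_{v_0}}$ and taking the supremum.

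First I would recall that in a special type an element $a \in \calY_s^1$ has the form $a = s(\alpha) = \sum_{j=-n}^{0} c_j \pi^j$ with $c_j \in \mathcal R$, $c_{-n} \ne 0$, and $n \ge 1$ (the condition $|a|_{v_0} > 1$ is exactly $n \ge 1$). Then $|a|_{v_0} = |\pi|_{v_0}^{-n}$, and since $\mathfrak P$ lies over $p$ with $|\pi|_{v_0}^{d_{v_0}} = |p|_p^{f} = p^{-f}$ where... actually more cleanly: $|a|_{v_0}^{d_{v_0}} = N(\mathfrak P)^{n}$, but what matters is that $|\pi^\sigma|_\infty = \lambda_\sigma$ and the product formula gives $\prod_{\sigma\in\Sigma}\lambda_\sigma^{?}$ related to $|a|_{v_0}$; I will instead work archimedean-embedding by archimedean-embedding. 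For a fixed $\sigma \in \Sigma$, apply the triangle inequality and the bound $|c_j^\sigma|_\infty \le L_\sigma$ together with $\lambda_\sigma > 1$:
$$|a^\sigma|_\infty \le \sum_{j=-n}^{0} L_\sigma \lambda_\sigma^{-j} \le L_\sigma \sum_{k=0}^{n} \lambda_\sigma^{k} = L_\sigma \frac{\lambda_\sigma^{n+1}-1}{\lambda_\sigma - 1} < L_\sigma \frac{\lambda_\sigma^{n+1}}{\lambda_\sigma-1}.$$
Now use $\theta(x) \le |x|_\infty + 1$ to get $\theta(a^\sigma) \le |a^\sigma|_\infty + 1 < \frac{L_\sigma}{\lambda_\sigma-1}\lambda_\sigma^{n+1} + 1$. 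The hypothesis $L_\sigma \le (\lambda_\sigma - 1)(1 - \lambda_\sigma^{-2})$ rewrites as $\frac{L_\sigma}{\lambda_\sigma-1} \le 1 - \lambda_\sigma^{-2}$, hence
$$\theta(a^\sigma) \le (1-\lambda_\sigma^{-2})\lambda_\sigma^{n+1} + 1 = \lambda_\sigma^{n+1} - \lambda_\sigma^{n-1} + 1 \le \lambda_\sigma^{n+1} - \lambda_\sigma^{n-1} + \lambda_\sigma^{n-1} = \lambda_\sigma^{n+1},$$
using $n \ge 1$ so $\lambda_\sigma^{n-1} \ge 1$. Wait — I must be careful: $\theta(a^\sigma) \le (1-\lambda_\sigma^{-2})\lambda_\sigma^{n+1}+1$ is strict only if the geometric sum bound is strict, which it is; in the equality case of $L_\sigma$ I still get $\theta(a^\sigma) < \lambda_\sigma^{n+1}$ from the strict geometric bound, giving the wiggle room needed for part (a) to be non-strict overall. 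Taking the product over $\sigma$ and noting $|a|_{v_0}^{d_{v_0}} = \prod_{\sigma\in\Sigma}\lambda_\sigma^{n}$ — this is the identity $|a|_{v_0}^{d_{v_0}} = \prod_{\sigma}|a^\sigma|$ replaced by the fact that $|\pi|_{v_0}^{d_{v_0}} = \prod_\sigma |\pi^\sigma|_\infty^{-1}$ coming from the product formula applied to $\pi$ (all other non-archimedean valuations of $\pi$ are $\le 1$ and equal to $1$ except possibly... I need $|\pi|_v = 1$ for non-archimedean $v \ne v_0$, which holds because $\pi$ generates $\mathfrak P$ and $\calo_K$ has the right localization; this needs a short justification). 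Then
$$\frac{\prod_{\sigma\in\Sigma}\theta(a^\sigma)}{|a|_{v_0}^{d_{v_0}}} \le \frac{\prod_\sigma \lambda_\sigma^{n+1}}{\prod_\sigma \lambda_\sigma^{n}} \cdot (\text{correction}) = \prod_\sigma \lambda_\sigma \cdot \frac{1}{\prod_\sigma \lambda_\sigma} \cdots$$
— hmm, I should line up the exponents so the $n$-dependence cancels: $\prod_\sigma \theta(a^\sigma) < \prod_\sigma \lambda_\sigma^{n+1}$ while $|a|_{v_0}^{d_{v_0}}$ should equal $\prod_\sigma \lambda_\sigma^{n+1}$ divided by $\prod_\sigma \lambda_\sigma$, i.e. $|a|_{v_0}^{d_{v_0}} = \lambda^{n}$ where $\lambda := \prod_\sigma \lambda_\sigma = N(\pi)^{\pm1}$... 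I will sort the exact normalization in the write-up, but the point is the ratio is bounded by $1$ uniformly in $n$, giving $\nu_\tau \le 1$, and strictly less than $1$ if some $\sigma$ has strict inequality in the hypothesis. Then parts (a) and (b) follow immediately from Theorem \ref{teo:FCcondition}(a) and (b).

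The main obstacle I anticipate is pinning down the normalization constant relating $|a|_{v_0}^{d_{v_0}}$ to $\prod_{\sigma\in\Sigma}\lambda_\sigma^{n}$ — this is where the product formula and the fact that $\pi$ is a local uniformizer with trivial valuation at all other finite places enter, and getting the exponent of $\lambda_\sigma$ to match exactly so that the $n$-dependence cancels. A second, more delicate point is the equality case: I want $\nu_\tau \le 1$ (not $< 1$) under the non-strict hypothesis, so I must check the chain $\theta(a^\sigma) < \lambda_\sigma^{n+1}$ genuinely survives as a non-strict product inequality — since each factor is strictly below $\lambda_\sigma^{n+1}$ the product is strictly below $\prod_\sigma \lambda_\sigma^{n+1}$, so in fact $\nu_\tau \le 1$ holds with the supremum possibly not attained; that is exactly what Theorem \ref{teo:FCcondition}(a) requires. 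For part (b), one strict inequality $L_{\sigma_0} < (\lambda_{\sigma_0}-1)(1-\lambda_{\sigma_0}^{-2})$ improves the bound on $\theta(a^{\sigma_0})$ by a factor bounded away from $1$ uniformly in $n$ (because the improvement is $\lambda_{\sigma_0}^{n+1}\delta$ for a fixed $\delta > 0$ against a denominator of the same order), forcing $\nu_\tau < 1$. I would close by simply invoking Theorem \ref{teo:FCcondition}.
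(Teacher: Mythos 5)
Your overall strategy --- reduce to Theorem \ref{teo:FCcondition} by bounding $\prod_{\sigma}\theta(a^\sigma)/|a|_{v_0}^{d_{v_0}}$ for $a\in\calY_s^1$ --- is the right one and is the paper's, but your estimate of $|a^\sigma|_\infty$ contains a sign error in the exponent that breaks the argument. Writing $a=\sum_{j=-n}^{0}c_j\pi^j$, the $j$-th term has archimedean size $|c_j^\sigma|_\infty\,\lambda_\sigma^{\,j}$ (exponent $j$, not $-j$); since $j\le 0$ and $\lambda_\sigma>1$ these factors are all $\le 1$, so the correct bound is
$$|a^\sigma|_\infty\ \le\ L_\sigma\sum_{k=0}^{n}\lambda_\sigma^{-k}\ \le\ \frac{L_\sigma\lambda_\sigma}{\lambda_\sigma-1},$$
which is \emph{independent of $n$}. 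The hypothesis $L_\sigma\le(\lambda_\sigma-1)\left(1-\lambda_\sigma^{-2}\right)$ is tailored exactly to this: it gives $|a^\sigma|_\infty<\lambda_\sigma-\lambda_\sigma^{-1}$, hence $\theta(a^\sigma)\le\lambda_\sigma$, and then
$$\frac{\prod_{\sigma\in\Sigma}\theta(a^\sigma)}{|a|_{v_0}^{d_{v_0}}}\ \le\ \frac{\prod_{\sigma\in\Sigma}\lambda_\sigma}{|a|_{v_0}^{d_{v_0}}}\ =\ \frac{N_{K/\QQ}(\mathfrak{P})}{|a|_{v_0}^{d_{v_0}}}\ \le\ 1,$$
using only $|a|_{v_0}\ge|\pi|_{v_0}^{-1}$ (i.e.\ $n\ge 1$); no cancellation of $n$-dependent exponents is needed. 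This is exactly the paper's proof.

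Your version instead sums $L_\sigma\lambda_\sigma^{-j}=L_\sigma\lambda_\sigma^{|j|}$, producing the growing bound $\theta(a^\sigma)<\lambda_\sigma^{n+1}$; dividing by $|a|_{v_0}^{d_{v_0}}=\prod_\sigma\lambda_\sigma^{\,n}$ then leaves a residual factor $\prod_\sigma\lambda_\sigma=N_{K/\QQ}(\mathfrak{P})>1$, so your computation only yields $\nu_\tau\le N_{K/\QQ}(\mathfrak{P})$, not $\nu_\tau\le 1$. The difficulty you flag about \lq\lq lining up the exponents so the $n$-dependence cancels\rq\rq\ is a symptom of this error: the $n$-dependence does cancel, but the leftover constant is wrong, and Theorem \ref{teo:FCcondition} cannot be invoked. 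Once the exponent sign is corrected, your handling of the two cases matches the paper's: the non-strict hypothesis gives $\nu_\tau\le 1$ and hence $\CFP$, while a strict inequality at one $\sigma_0$ gives the uniform factor $J=\theta\bigl(L_{\sigma_0}\lambda_{\sigma_0}/(\lambda_{\sigma_0}-1)\bigr)/\lambda_{\sigma_0}<1$ and hence $\nu_\tau<1$ and $\CFF$.
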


\begin{proof} 
Recall that, for a special type $\tau=(K, \pi, \mathcal R)$,  every $a\in \calR$ has the form $\sum_{j=-k}^0 c_j
\pi^j$. Then, for every $a\in \calR$ and every $\sigma\in \Sigma$, since by hypothesis $\lambda_{\sigma}>1$, we have
\begin{equation*} \label{eq:boundY} 
|a^\sigma|_\infty \leq L_\sigma \sum_{j=0}^k \frac{1}{\lambda_\sigma^j}
 \leq \frac{L_\sigma\lambda_\sigma}{\lambda_\sigma-1},
 \end{equation*}
 which using the hypothesis, gives $|a^\sigma|_\infty\leq \lambda_\sigma - \frac{1}{\lambda_\sigma}$, so that  $\theta(a^\sigma)\leq \lambda_\sigma$. Then, under the assumption $|a|_{v_0}>1$, we have
\begin{align*}\frac{\prod_{\sigma\in\Sigma} \theta(a^\sigma)}{|a|_{v_0}^{d_{v_0}}}& \leq \frac{\prod_{\sigma\in\Sigma} \lambda_\sigma}{|a|_{v_0}^{d_{v_0}}}\leq |N_{K/\QQ}(\pi)|_\infty \cdot |\pi|_{v_0}^{d_{v_0}}\\
\intertext{and, writing $d_{v_0}=e_{v_0}f_{v_0}$, where $e_{v_0}$ is the ramification index and $f_{v_0}$ the residual degree at $v_0$, we further have}
&\leq p^{f_{v_0}}\cdot \frac 1 {p^{\frac {d_{v_0}}{e_{v_0}}}}\leq 1,
\end{align*}
hence $\nu_\tau\leq 1$ and we can apply part $a$) of Theorem \ref{teo:FCcondition}, which proves $a)$. 

Let us now assume that there exists an embedding $\sigma_0 \in \Sigma$ such that $L_{\sigma_0}<(\lambda_{\sigma_0}-1)\left(1-\frac 1 {\lambda_{\sigma_0}^2}\right)$; 
then, $|a^{\sigma_0}|_\infty \leq  \frac {L_{\sigma_0}\lambda_{\sigma_0} }{\lambda_{\sigma_0}-1} <\lambda_{\sigma_0}-\frac 1 {\lambda_{\sigma_0}}$, hence
$$\theta(a^{\sigma_0}) \leq \theta \left (\frac {L_{\sigma_0}\lambda_{\sigma_0} }{\lambda_{\sigma_0}-1}\right ) < \lambda_{\sigma_0}.$$ 
Let us put $J:= \frac 1 {\lambda_{\sigma_0}}\cdot \theta \left (\frac {L_{\sigma_0}\lambda_{\sigma_0} }{\lambda_{\sigma_0}-1}\right )<1$. Then, the same calculation as before shows that 
$$\frac{\prod_{\sigma\in\Sigma} \theta(a^\sigma)}{|a|_{v_0}^{d_{v_0}}} \leq J\cdot  \frac{\prod_{\sigma\in\Sigma} \lambda_\sigma}{|a|_{v_0}^{d_{v_0}}} \leq J<1,$$ 
so that $\nu_\tau<1$ and the conclusion follows by applying part $b$) of Theorem \ref{teo:FCcondition}.
\end{proof} 

\section{The $\CFF$ property for norm Euclidean number fields} \label{sec:5}
Let $K$ be a number field of degree $d$ and let us denote by $r_1,r_2$ the number of real and complex embeddings of $K$. For every $1\leq i\leq r_1$ let $\sigma_i$ be the real embeddings and, for every $1\leq j\leq r_2$, let $(\tau_j, \overline{\tau}_j)$ be the $r_2$ pairs of complex embeddings. We denote by $\Sigma$ be the whole set of embeddings.

In what follows, we shall denote by $|\cdot |$ the standard complex absolute value.\\
Let
\begin{align*} i: K& \longrightarrow  \RR^{r_1}\times \CC^{r_2}\\
\lambda &\longmapsto (\sigma_1(\lambda),\ldots,\sigma_{r_{1}}(\lambda),\tau_1(\lambda),\ldots,\tau_{r_2}(\lambda))
\end{align*} be the canonical embedding of $K$, and
$$\ell: K^\times \to \RR^{r_1+r_2}$$ be the logarithmic embedding, i.e., the composition $L\circ i$ where 
\begin{align*} L: \RR^{r_1}\times \CC^{r_2}& \longrightarrow  \RR^{r_1}\times \RR^{r_2}\\
(x_1,\ldots, x_{r_1}, y_1,\ldots, y_{r_2})  &\longmapsto (\log(|x_1|),\ldots, \log(|x_{r_1}|), 2\log(|y_1|),\ldots, 2\log(|y_{r_2}|)).
\end{align*} \\
For $\mathbf{x}=(x_1,\ldots, x_{r_1},y_1,\ldots, y_{r_2})\in \RR^{r_1}\times \CC^{r_2}$, let us define
$$N(\mathbf{x})=\prod_{i=1}^{r_1}|x_i|\cdot \prod_{j=1}^{r_2}|y_j|^2;$$ 
then, $N(i(a))=|N_{K/\QQ}(a)|$ for every $a\in K.$
\medskip

In what follows, we want to deepen the study of types in the case of norm Euclidean number fields; in this setting, we will see that the existence of $\mathfrak{P}$-adic types satisfying CFF is related to the notion of Euclidean minimum of the field.
\subsection{Euclidean minimum} We recall the definition and main properties of the Euclidean minimum.
\begin{definition} Let $\alpha\in K$, the \emph{Euclidean minimum} of $\alpha$ is the real number
$$m_K(\alpha)=\inf\left\{|N_{K/\QQ}(\alpha -\gamma)|\ |\ \gamma \in \calo_K\right\}.$$
\end{definition}
The Euclidean minimum can be extended to $K\otimes_\QQ \RR\simeq \RR^{r_1}\times \CC^{r_2} $ as in the following definition.
\begin{definition}
Let $\mathbf{x} \in  \RR^{r_1}\times \CC^{r_2}$.  The \emph{inhomogeneous minimum} of $\mathbf{x}$ is the real number
$$\overline{m}_{K}(\mathbf{x} )=\inf\left\{|N(\mathbf{x}  -i(\gamma))|\ |\ \gamma \in \calo_K\right\}.$$
\end{definition}
It is clear that $m_K(\alpha)=\overline{m}_K(i(\alpha))$ for every $\alpha\in K$.
Moreover, $\overline{m}_K$ induces an upper semi-continuous map of the torus $\RR^{r_1}\times \CC^{r_2}/i(\calo_K)$, which is a compact set; therefore, $\overline{m}_K$ is bounded and attains its maximum. Then, also $m_K$ is bounded on $K$, and we can give the following definitions:
\begin{definition}
    The \emph{inhomogeneous minimum} of $K$ is the positive real number
    $$\overline{M}(K)=\sup\{\overline{m}_K(\mathbf{x})\ |\ \mathbf{x}\in \RR^{r_1}\times \CC^{r_2}\}.$$
    The \emph{Euclidean minimum} of $K$ is the positive real number
    $$M(K)=\sup\{m_K(\alpha)\ |\ \alpha\in K\}.$$
\end{definition}

By the above definition it is clear that $M(K)\leq \overline{M}(K)$. Moreover, 
it is easy to see that $K$ is norm Euclidean if and only if $m_K(\alpha)<1$ for every $\alpha\in K$. Therefore $K$ is norm-Euclidean if $M(K)<1$ and it is not norm-Euclidean if $M(K)>1$. The following non trivial result holds:
\begin{theorem}{\cite[Theorem 3]{Cerri2006}}\label{teo:cerri1}
If $K$ is a number field, then $M(K)=\overline{M}(K)$. 
\end{theorem}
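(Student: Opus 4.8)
The final statement to prove is Theorem~\ref{teo:cerri1}: for any number field $K$, $M(K)=\overline{M}(K)$. Since $M(K)\leq \overline{M}(K)$ is immediate from the definitions (as $i(K)$ is a subset of $\RR^{r_1}\times\CC^{r_2}$ over which the sup defining $\overline{M}(K)$ is taken), the content is the reverse inequality $\overline{M}(K)\leq M(K)$, i.e. showing that the supremum of $\overline{m}_K$ over the whole torus $T=(\RR^{r_1}\times\CC^{r_2})/i(\calo_K)$ is not larger than its supremum over the image of $K$.

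The plan is to exploit two facts established (or recalled) in the excerpt: first, that $\overline{m}_K$ descends to an \emph{upper semi-continuous} function on the \emph{compact} torus $T$, hence attains its maximum at some point $\mathbf{x}_0$; second, that $i(K)$ is dense in $\RR^{r_1}\times\CC^{r_2}$ and hence its image is dense in $T$. The naive attempt---pick $\alpha_n\in K$ with $i(\alpha_n)\to\mathbf{x}_0$ and conclude $\overline{m}_K(\mathbf{x}_0)\leq\liminf m_K(\alpha_n)$---fails, because upper semi-continuity gives an inequality in the \emph{wrong} direction ($\limsup \overline{m}_K(i(\alpha_n))\leq \overline{m}_K(\mathbf{x}_0)$). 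So the real work is to produce, near the maximizing point $\mathbf{x}_0$, elements of $K$ whose Euclidean minimum is \emph{close to} $\overline{m}_K(\mathbf{x}_0)$ rather than merely bounded above by it.

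The key step I would carry out is a perturbation argument: fix $\varepsilon>0$; since $\overline{m}_K(\mathbf{x}_0)=\inf_{\gamma\in\calo_K}|N(\mathbf{x}_0-i(\gamma))|$ and the infimum over the \emph{lattice} $i(\calo_K)$ of the continuous proper function $\gamma\mapsto|N(\mathbf{x}_0-i(\gamma))|$ is actually attained (the sublevel sets are bounded, as $|N(\cdot)|\to\infty$ away from the coordinate hyperplanes but one only needs finitely many lattice translates to check), one has a \emph{finite} set $\Gamma$ of $\gamma\in\calo_K$ realizing values within $\varepsilon$ of the minimum. Now choose $\alpha\in K$ with $i(\alpha)$ so close to $\mathbf{x}_0$ that $|N(i(\alpha)-i(\gamma))|$ is within $\varepsilon$ of $|N(\mathbf{x}_0-i(\gamma))|$ for \emph{every} $\gamma$ in that finite set, and simultaneously so close that for $\gamma\notin\Gamma$ the value $|N(i(\alpha)-i(\gamma))|$ stays bounded below (using that outside a large ball the norm form is large, uniformly in a neighborhood of $\mathbf{x}_0$). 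Then $m_K(\alpha)=\min_\gamma|N(i(\alpha)-i(\gamma))|\geq \overline{m}_K(\mathbf{x}_0)-O(\varepsilon)$, so $M(K)\geq \overline{m}_K(\mathbf{x}_0)-O(\varepsilon)=\overline{M}(K)-O(\varepsilon)$; letting $\varepsilon\to 0$ gives the claim.

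The main obstacle is the uniform control for the ``far away'' lattice points $\gamma\notin\Gamma$: one must ensure that moving from $\mathbf{x}_0$ to a nearby $i(\alpha)$ cannot suddenly make $|N(i(\alpha)-i(\gamma))|$ small for some $\gamma$ that was far from $\mathbf{x}_0$. This is where properness of the norm form and a compactness/covering argument on $T$ are essential: one reduces to finitely many lattice cosets by noting that a fundamental domain for $i(\calo_K)$ is bounded and that $|N(\mathbf{x})|$ on $\RR^{r_1}\times\CC^{r_2}$ grows without bound as $\mathbf{x}$ leaves any bounded region transversally to the coordinate subspaces, while along those subspaces $N$ vanishes but the inhomogeneous minimum over a bounded fundamental domain is still controlled. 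Packaging this cleanly---perhaps by invoking that $\overline{m}_K$ is not merely upper semi-continuous but in fact locally given by a minimum of finitely many continuous functions on a neighborhood of any point---is the crux; with that local structure the perturbation estimate above becomes routine. I would cite the relevant lemmas from \cite{Cerri2006} for this local finiteness rather than reprove them here.
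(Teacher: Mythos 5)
The paper does not prove this statement at all: it is quoted as a black box from \cite[Theorem 3]{Cerri2006} (the text even flags it as ``non trivial''), so there is no internal proof to compare yours against. Judged on its own merits, your argument has a genuine gap at exactly the point you call the crux. You assert that $\gamma\mapsto|N(\mathbf{x}_0-i(\gamma))|$ is proper with bounded sublevel sets, that the infimum defining $\overline{m}_K(\mathbf{x}_0)$ is attained and controlled by a finite set $\Gamma$, and ultimately that $\overline{m}_K$ is locally a minimum of finitely many continuous functions. All of this fails as soon as $r_1+r_2>1$, i.e.\ for every number field other than $\QQ$ and the imaginary quadratics: the sublevel set $\{\mathbf{y}\ :\ |N(\mathbf{y})|<\epsilon\}$ is an unbounded neighbourhood of the coordinate hyperplanes and contains infinitely many points of the lattice $i(\calo_K)$, so arbitrarily distant $\gamma$ can make $|N(\mathbf{x}-i(\gamma))|$ small, the infimum need not be attained, and no finite $\Gamma$ controls the minimum uniformly near $\mathbf{x}_0$. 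Indeed, if $\overline{m}_K$ were locally a finite minimum of continuous functions it would be continuous, and then the naive density argument you rightly discarded would already prove the theorem; the failure of lower semi-continuity of $\overline{m}_K$ is precisely the obstruction, and your perturbation step does not remove it --- it quietly assumes lower semi-continuity at the maximizing point. The ``relevant lemmas'' on local finiteness that you propose to cite from \cite{Cerri2006} are not there, because they are not true.

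The equality $M(K)=\overline{M}(K)$ is genuinely deep, and Cerri's proof is of a completely different nature: for unit rank $r_1+r_2-1\geq 2$ it exploits the action of the unit group $\calo_K^\times$ on the compact quotient $(\RR^{r_1}\times\CC^{r_2})/i(\calo_K)$ together with Berend's theorem on $\ZZ^n$-actions by commuting ergodic automorphisms of the torus, showing that the relevant invariant sets must meet the image of $K$; the remaining low-rank cases rest on older results in the style of Barnes and Swinnerton-Dyer. Your sketch is essentially correct only when $r_1+r_2=1$, where the norm form is definite and proper; for the general statement you should either reproduce the dynamical argument or, as the paper does, simply cite the result.
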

\subsection{The $\CFF$ property for norm Euclidean number fields}

 In order to prove our main results about the $\CFF$ property for norm Euclidean fields, we shall impose conditions on a $\mathfrak{P}$-adic type $\tau$ allowing to suitably bound the quantity $\nu_\tau$ defined in Theorem \ref{teo:FCcondition}. In particular, to apply Theorem \ref{teo:FCcondition} we need to control the size of 
$\prod_{\sigma\in \Sigma} \theta(a^\sigma)$ for $a\in\calY_\tau$ such that $|a|_{v_0}>1$. 

First, let us notice that
$$ |N_{K/\QQ}(a)|<  \prod_{\sigma\in \Sigma}\theta(a^\sigma)< \prod_{\sigma\in \Sigma} ( |a^\sigma|+1),$$ 
and we can regard the latter expression as a sum 
$$\prod_{\sigma\in \Sigma} ( |a^\sigma|+1) =|N_{K/\QQ}(a)| + F(|a^\sigma|,\sigma\in\Sigma). $$

We shall exploit norm Euclidean properties of the field to bound $|N_{K/\QQ}(a)|$; on the other hand, in order to bound the second addend $F$, we need  more refined conditions allowing to control each archimedean absolute value of $a$ and not only their product. To this purpose, the following lemma will be useful: 
\begin{lemma}\label{lem:univin}
There exists $T_0>0$ (depending only on $K$) such that, for every $a\in K^\times $, there exists $u\in \calo_K^\times$ satisfying, for every $\sigma\in\Sigma $,
$$ |(au)^\sigma|  \leq T_0 \sqrt[d]{|N_{K/\QQ}(a)|}. $$
\end{lemma}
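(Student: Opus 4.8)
The plan is to use the logarithmic embedding together with Dirichlet's unit theorem. The key observation is that the image $\ell(\calo_K^\times)$ is a full-rank lattice in the trace-zero hyperplane $H=\{\mathbf{t}\in\RR^{r_1+r_2}\ :\ \sum t_i=0\}$ of $\RR^{r_1+r_2}$. Given $a\in K^\times$, consider the point $\ell(a)=(\log|\sigma_1(a)|,\ldots,\log|\sigma_{r_1}(a)|,2\log|\tau_1(a)|,\ldots,2\log|\tau_{r_2}(a)|)$, whose coordinate sum is $\log|N_{K/\QQ}(a)|$. Subtracting off the ``average'' vector $\mathbf{m}_a=\frac{1}{r_1+r_2}\log|N_{K/\QQ}(a)|\cdot(1,\ldots,1)$ yields a point $\ell(a)-\mathbf{m}_a\in H$. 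Since $\ell(\calo_K^\times)$ is a full lattice in $H$, there is a fundamental domain $\calD$ for $\ell(\calo_K^\times)$ in $H$ which is bounded; hence there exists $\mathbf{w}\in\ell(\calo_K^\times)$, say $\mathbf{w}=\ell(u^{-1})$ for some unit $u$, with $\ell(a)-\mathbf{m}_a-\mathbf{w}\in\calD$, i.e. all coordinates of $\ell(ua)-\mathbf{m}_a$ are bounded in absolute value by some constant $B$ depending only on $\calD$, hence only on $K$.

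From this, first I would unwind the definition of the logarithmic embedding to conclude that for each real embedding $\sigma_i$ one has $\bigl|\log|\sigma_i(ua)|-\frac{1}{d}\log|N_{K/\QQ}(a)|\bigr|\le B$, using that $\frac{1}{r_1+r_2}\log|N_{K/\QQ}(a)|$ differs from $\frac{1}{d}\log|N_{K/\QQ}(a)|$ in a way already absorbed (more cleanly: choose $\mathbf{m}_a=\frac{1}{d}\log|N_{K/\QQ}(a)|\cdot(1,\ldots,1,2,\ldots,2)$ with the real entries equal to $1$ and the complex entries equal to $2$, so that $\mathbf{m}_a$ lies on the diagonal direction which is orthogonal to $H$ under the appropriate inner product, or simply note that the difference between these two normalizations is itself a bounded vector since $r_1+r_2$ and $d$ are fixed). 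For the complex embeddings $\tau_j$ one similarly gets $\bigl|2\log|\tau_j(ua)|-\frac{1}{d}\log|N_{K/\QQ}(a)|\bigr|\le B'$. Exponentiating, in all cases $|\sigma(ua)|\le e^{B''}\cdot|N_{K/\QQ}(a)|^{1/d}$ for a constant $B''$ depending only on $K$; setting $T_0=e^{B''}$ gives the claim.

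The main technical point to get right is the bookkeeping between the two normalizations (dividing the norm's logarithm by $r_1+r_2$ versus by $d$) and the fact that the logarithmic embedding weights complex places by $2$; these contribute only fixed bounded discrepancies, so they can be folded into the constant $T_0$, but one must be careful to state this cleanly. I would handle it by working in the hyperplane $H$ with respect to the standard inner product, using that the diagonal vector $(1,\ldots,1)$ is orthogonal to $H$, writing $\ell(a)=\bigl(\ell(a)-\mathrm{pr}_H^\perp(\ell(a))\bigr)+\mathrm{pr}_H^\perp(\ell(a))$, noting $\mathrm{pr}_H^\perp(\ell(a))=\frac{1}{r_1+r_2}\bigl(\sum\text{coords}\bigr)(1,\ldots,1)=\frac{\log|N_{K/\QQ}(a)|}{r_1+r_2}(1,\ldots,1)$, then reducing the $H$-component modulo the lattice $\ell(\calo_K^\times)$. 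The only genuine input is Dirichlet's unit theorem (full rank of the unit lattice) plus compactness of a fundamental domain; everything else is elementary. I do not expect any serious obstacle here — this is a standard consequence of Dirichlet's theorem, and the lemma is essentially the statement that one can reduce an element of $K^\times$, up to units, to have roughly balanced archimedean absolute values.
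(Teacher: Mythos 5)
Your proposal is correct and follows essentially the same route as the paper: the paper takes $\mathbf{b}=\ell\bigl(a/\sqrt[d]{|N_{K/\QQ}(a)|}\bigr)$, which lands in the trace-zero hyperplane, reduces it modulo the full-rank unit lattice $\ell(\calo_K^\times)$ into a bounded region, and exponentiates — exactly your argument with the "clean" normalization $\mathbf{m}_a=\tfrac{1}{d}\log|N_{K/\QQ}(a)|\cdot(1,\ldots,1,2,\ldots,2)$. One caveat: your fallback justification, that the $\tfrac{1}{r_1+r_2}$- and $\tfrac{1}{d}$-normalizations "differ by a bounded vector," is false, since their difference is $\bigl(\tfrac{1}{r_1+r_2}-\tfrac{1}{d}\bigr)\log|N_{K/\QQ}(a)|\cdot(1,\ldots,1)$, which is unbounded as $a$ varies; you must use the weighted diagonal $(1,\ldots,1,2,\ldots,2)$ version (orthogonality to $H$ is irrelevant — all that is needed is that $\ell(a)-\mathbf{m}_a$ lies in $H$ and that each coordinate of $\mathbf{m}_a$ recovers the target bound after exponentiation).
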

\begin{proof} We denote by $\mathcal H$ the hyperplane in $\RR^{r_1}\times \RR^{r_2}$ defined by 
$$x'_1+...+x'_{r_1}+2y'_1+...+2y'_{r_2}=0, $$
and we denote as before by $\ell:K^{\times} \rightarrow \RR^{r_1+r_2}$ the logarithmic embedding.
Since $\ell(\calo_K^\times)$ is a lattice in $\mathcal H$, there exists $T>0$ such that, for every $\mathbf{b} \in \mathcal H$ there is $u\in \calo_K^\times$ with $||\mathbf{b}+\ell(u)||_{\infty}<T$, where $||\cdot||_{\infty}$ is the sup norm in $\RR^{r_1+r_2}$. 

For $a\in K^\times$, let us take $$\mathbf{b}=L \left(\frac {i(a)}{\sqrt[d]{|N_{K/\QQ}(a)|}}\right )=\ell(a)-\frac {\log({|N_{K/\QQ}(a)|})} d (1,\ldots,1) ;$$ by construction, $\mathbf{b}\in \mathcal H$, hence there exists $u\in \calo_K^\times$ such that $||\mathbf{b}+\ell(u)||_\infty<T$.  This implies that, for every $\sigma\in \Sigma$,
$$
\left | \log \left (\left |\frac  {(au)^\sigma}{\sqrt[d]{|N_{K/\QQ}(a)|}}\right | \right ) \right |=
\left | \log \left (\left |\frac {a^\sigma}{\sqrt[d]{|N_{K/\QQ}(a)|}}\right | \right ) +\log(|u^\sigma|) \right | \leq ||\mathbf{b}+\ell(u)||_\infty <T
$$
so that $\left |{(au)^\sigma}\right |<T_0 \sqrt[d]{|N_{K/\QQ}(a)|}$ for a suitable $T_0$, as wanted.
\end{proof}

For $\alpha\in\calo_K$, $\epsilon>0$ and $\mathbf{x}\in \RR^{r_1}\times \CC^{r_2}$, we define
$$U_\epsilon(\alpha)=\{\mathbf{y}\in \RR^{r_1}\times \CC^{r_2}\ |\ |N(i(\alpha)-\mathbf{y})| <\epsilon\}.$$
Then $U_\epsilon(\alpha)$ is an open subset of $\RR^{r_1}\times \CC^{r_2}$. 
\begin{theorem}\label{teo:euclidean}
Assume that $K$ is a norm Euclidean number field such that $M(K)<1$.
Then, the field $K$ satisfies the $\mathfrak{P}$-adic $\CFF$-property for all but finitely many prime ideals $\mathfrak{P}$ of $\calo_K$. 
\end{theorem}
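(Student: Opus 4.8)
The plan is to construct, for all but finitely many $\mathfrak{P}$, a type $\tau=(K,\mathfrak{P},s)$ whose set of representatives $\calY_s^1$ of "large" partial quotients can be chosen so that $\nu_\tau<1$, and then invoke Theorem \ref{teo:FCcondition}(b). The key idea is that a $\mathfrak{P}$-adic floor function amounts to choosing a set $\calY$ of representatives of the cosets of $\mathfrak{P}\calo_{v_0}$ in $K_{v_0}$, containing $0$ and lying in $\calo_{K,\{v_0\}}$; we have freedom in making this choice, and the norm-Euclidean hypothesis $M(K)<1$ together with Cerri's theorem (Theorem \ref{teo:cerri1}), i.e. $\overline{M}(K)=M(K)<1$, is exactly what lets us pick representatives of small norm.

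First I would fix an auxiliary real number $\mu$ with $M(K)<\mu<1$, and set up the mechanism to select, for each coset $C$ of $\mathfrak{P}\calo_{v_0}$ different from $\mathfrak{P}\calo_{v_0}$ itself, a representative $s(C)=a\in\calo_{K,\{v_0\}}$. The cosets correspond (after scaling by a uniformizer) to residues; a representative $a$ of a nontrivial coset satisfies $|a|_{v_0}>1$, and in fact $|a|_{v_0}^{d_{v_0}}\ge p^{\,d_{v_0}/e_{v_0}}=p^{f_{v_0}}$ where we write $d_{v_0}=e_{v_0}f_{v_0}$. The point is to choose $a$ so that its \emph{archimedean} size is controlled. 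Using the inhomogeneous minimum: given any target point, there is $\gamma\in\calo_K$ bringing the norm below $\mu$; combining this with Lemma \ref{lem:univin} (multiply by a suitable unit $u\in\calo_K^\times$, which does not change the coset structure in a way that matters and preserves $|ua|_v\le 1$ at all non-archimedean $v\ne v_0$ since units are non-archimedean-trivial), I can arrange that the chosen representative $a$ satisfies $|a^\sigma|\le T_0\sqrt[d]{|N_{K/\QQ}(a)|}$ for every $\sigma\in\Sigma$, with $T_0$ depending only on $K$. Then
$$\prod_{\sigma\in\Sigma}\theta(a^\sigma)\le \prod_{\sigma\in\Sigma}(|a^\sigma|+1)\le \prod_{\sigma\in\Sigma}\bigl(T_0\sqrt[d]{|N_{K/\QQ}(a)|}+1\bigr).$$
Now $|N_{K/\QQ}(a)|$ equals the product of $|a|_v^{d_v}$ over non-archimedean $v$ times $\prod_\sigma|a^\sigma|$-type terms via the product formula; the crucial consequence of the construction is that $|N_{K/\QQ}(a)|$ is comparable to $|a|_{v_0}^{d_{v_0}}$ up to a bounded factor, and one gets an estimate of the shape
$$\frac{\prod_{\sigma\in\Sigma}\theta(a^\sigma)}{|a|_{v_0}^{d_{v_0}}}\ \le\ \mu\cdot\Bigl(1+\frac{c}{|a|_{v_0}}\Bigr)^{d}$$
for a constant $c=c(K,T_0)$, uniformly over $a\in\calY_s^1$.

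The second step is the "all but finitely many $\mathfrak{P}$" mechanism. Since $|a|_{v_0}\ge p^{1/e_{v_0}}\ge \sqrt{p}$ (using that $p$ is odd, so $p\ge 3$) for every $a\in\calY_s^1$, the correction factor $(1+c/|a|_{v_0})^d$ tends to $1$ as $p\to\infty$. Hence there is an effective bound $p_0=p_0(K,\mu)$ such that for every prime ideal $\mathfrak{P}$ lying over a rational prime $p>p_0$, the displayed quantity is $<1$ for all $a\in\calY_s^1$, giving $\nu_\tau<1$ and therefore the $\mathfrak{P}$-adic $\CFF$ property by Theorem \ref{teo:FCcondition}(b). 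Only the finitely many $\mathfrak{P}$ above primes $p\le p_0$ are excluded, which is the content of the statement. I would also record that the representative-selection can be made to respect the requirement $s(0)=0$, $s^2=s$, and $s(\alpha)=s(\beta)$ when $|\alpha-\beta|_{v_0}<1$ automatically, since $s$ is defined coset-by-coset and its image lies in $\calo_{K,\{v_0\}}\cap$ (bounded archimedean region), which is a \emph{finite} set, so idempotency $s^2=s$ can be enforced by, if necessary, iterating the selection on the finite image (a standard fixed-point argument on a finite set, or more simply: choose the representative within the image to be itself).

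**Main obstacle.** The delicate point is the interplay between the three normalizations that must hold \emph{simultaneously} for each representative $a$: (i) $a$ represents the prescribed coset of $\mathfrak{P}\calo_{v_0}$; (ii) $|a|_v\le 1$ for all non-archimedean $v\ne v_0$ (the "$\{\mathfrak{P}\}$-integrality"); (iii) $|a^\sigma|$ small at \emph{every} archimedean place, not merely $|N_{K/\QQ}(a)|$ small. Conditions (i)–(ii) say $a$ lies in a prescribed coset of $\calo_{K,\{v_0\}}$, a lattice-like object, and within such a coset the inhomogeneous-minimum bound controls only the norm, i.e. the \emph{product} of archimedean absolute values; Lemma \ref{lem:univin} then redistributes this via units, but multiplying by a unit must not destroy (i). The resolution is that $\calo_K^\times\subseteq \calo_{K,\{v_0\}}^\times$ acts on cosets of $\mathfrak{P}\calo_{v_0}$, and one can either absorb the unit by re-indexing which coset one is choosing a representative for (choosing representatives orbit-by-orbit under the unit action, exactly in the spirit of \cite{Eggleton1992}), or observe that after multiplying by $u$ one lands in the coset $u\cdot C$ and simply use this as the definition of $s$ on $u\cdot C$. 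Making this bookkeeping clean — ensuring every coset gets exactly one representative, that $0\mapsto 0$, and that the archimedean bound is uniform — is the technical heart of the argument; once it is in place, the passage to $\nu_\tau<1$ for $p$ large is a short estimate as sketched above.
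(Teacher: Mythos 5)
Your high-level strategy coincides with the paper's (build a type with $\nu_\tau<1$ and invoke Theorem \ref{teo:FCcondition}(b), using the Euclidean minimum to control norms of representatives and Lemma \ref{lem:univin} for archimedean balancing), and you correctly isolate the crux: each representative $a$ must simultaneously lie in a prescribed coset of $\mathfrak{P}\calo_{v_0}$, be a $\{\mathfrak{P}\}$-integer, have small norm, \emph{and} have every individual conjugate $|a^\sigma|$ controlled. But your resolution of this crux does not work. You propose to balance the conjugates of $a$ by multiplying by a unit $u$ and then to ``re-index'', defining $s(uC):=ua$ where $a$ is the balanced representative of $C$. This fails on two counts. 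First, multiplication by $u$ rescales each conjugate by $|u^\sigma|$, so if $a$ is balanced then $ua$ is \emph{not}, except for $u$ in a bounded (hence finite) set of units: the property you arranged for the coset $C$ is destroyed on precisely the other cosets $uC$ of its orbit, where you now need it. Second, the bookkeeping is not available: the unit $u=u(C)$ produced by Lemma \ref{lem:univin} depends on $C$, there is no reason for $C\mapsto u(C)\cdot C$ to be a bijection of the set of nontrivial cosets, and $ua$ need not depend only on the coset $uC$ when the stabilizer of $C$ in $\calo_K^\times$ is nontrivial. So some cosets end up with no balanced representative and the floor function is not defined everywhere.

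The paper resolves the tension differently, and this is the idea missing from your sketch: Lemma \ref{lem:univin} is applied only \emph{once per prime}, to a generator $\pi$ of $\mathfrak{P}$ (giving $|\pi^\sigma|<T_0\sqrt[d]{q}$ for all $\sigma$, with $q=N_{K/\QQ}(\mathfrak{P})$), and the archimedean control on the remaining factor comes from compactness rather than from units. One fixes a compact fundamental domain $\calD$ for $i(\calo_K)$, uses $M(K)=\overline{M}(K)<1$ (Theorem \ref{teo:cerri1}) together with compactness to cover $\calD$ by finitely many $U_{\tilde{\epsilon}}(\alpha_i)$ with a single $\tilde{\epsilon}<1$, and writes the representative as $a=\pi(\beta-\alpha_i)$ with $i(\beta)\in\calD$. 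Then $\beta-\alpha_i$ ranges over a fixed compact set, so $|\beta^\sigma-\alpha_i^\sigma|<H$ uniformly and $|a^\sigma|<HT_0\sqrt[d]{q}$ with no unit acting on $a$, hence no coset conflict. Note that this compactness step cannot be replaced by a bare appeal to $m_K(\beta)<\mu$: the region $\{\mathbf{x}\,:\,|N(\mathbf{x})|<\mu\}$ is unbounded, so a translate of small norm can have arbitrarily large individual conjugates. Two secondary points: your displayed bound with leading constant $\mu$ is not what $|a^\sigma|\le T_0\sqrt[d]{|N_{K/\QQ}(a)|}$ delivers (that route gives leading constant $T_0^d\mu$, which may exceed $1$); one must instead expand $\prod_\sigma(1+|a^\sigma|)$ and use that the top term is \emph{exactly} $|N_{K/\QQ}(a)|\le\tilde{\epsilon}q$, the individual conjugate bounds being needed only for the lower-order terms, which are $O(q^{(d-1)/d})$. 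And the inequality $p^{1/e_{v_0}}\ge\sqrt{p}$ is false when $e_{v_0}>2$, though this is harmless since only finitely many primes ramify.
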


\begin{proof} The image of $\calo_K$ in $\RR^{r_1}\times \CC^{r_2}\simeq \RR^d$ is a lattice; we denote by $\calD\subseteq \RR^{r_1}\times \CC^{r_2}$ a compact fundamental domain for  $i(\calo_K)$.
Since by assumption the Euclidean minimum $M(K)<1$, we have by Theorem \ref{teo:cerri1}, that $\calD$ is covered by open neighbourhood of radius $1$, i.e.
$\calD\subseteq \bigcup_{\gamma\in\calo_K} U_1(\gamma)$. Since $\calD$ is compact and the $U_{1}(\alpha)$'s are open, there exists a finite number of elements $\alpha_1,\ldots,\alpha_s$  of $\calo_K$ and a real $0<\tilde{\epsilon}<1$ such that
$\calD\subseteq \bigcup_{i=1}^s U_{\tilde{\epsilon}}(\alpha_i)$. Let $\mathfrak{P}$ be a prime ideal of $\calo_K$; we put $q=N_{K/\QQ}(\mathfrak{P})=\left | \calo_K/\mathfrak{P}\right |$, and, by Lemma \ref{lem:univin}, we choose a generator $\pi$ of $\mathfrak{P}$ such that $|\pi^\sigma| < T_0\sqrt[d]{q}$ for every embedding $\sigma \in \Sigma$.

We define a $\mathfrak{P}$-adic floor function $s$ as follows: let us consider a non trivial coset $\alpha+\mathfrak{P}\calo_{v_0} \subseteq  K_{v_0}$; by strong approximation, it contains an element $\alpha'\in K$ such that $|\alpha'|_v\leq 1$ for every non archimedean $v\in \calM_K$ and $v\not= v_0$. Then, $\alpha'\in\calo_K[\frac 1 \pi]$.  By translating  $\frac{\alpha'} \pi$ by a suitable element of $\calo_K$, we find a $\beta\in\calo_K[\frac 1 \pi]$ such that $i(\beta)\in \calD$ and $\alpha'\equiv \pi\beta \pmod{\mathfrak{P}}$. Since $\calD$ is covered by $\bigcup_{i=1}^s U_{\tilde{\epsilon}}(\alpha_i)$, there exists $\alpha_i\in\calo_K$ such that $N_{K/\QQ}(\beta-\alpha_i)<\tilde{\epsilon}$. Then, for every $\gamma\in \alpha+\mathfrak{P}\calo_{v_0}$ we put  $s(\gamma):=\pi(\beta-\alpha_i)$. 

We want now to apply Theorem \ref{teo:FCcondition} to show that the type associated to this floor function satisfies $\CFF$ property for all but finitely many prime ideals $\mathfrak{P}$ of $\mathcal{O}_K$. To prove this, let us call $a=\pi(\beta-\alpha_i)$; then, $N_{K/\QQ}(a)\leq \tilde{\epsilon} q$. Since $i(\beta)\in \calD$ which is a compact set and the $\alpha_i$ are finitely many, there exists $H>0$ depending only on $K$ such that $|\beta^\sigma -\alpha_i^\sigma | < H$ for every $\sigma\in \Sigma$, so that $|a^\sigma| < H |\pi^\sigma| <HT_0\sqrt[d]{q}$.
 It follows that, for every subset $S\subsetneq \Sigma$, 
$$
\prod_{\sigma\in S} |a^\sigma|\leq (HT_0)^{|S|} \sqrt[d]{q^{|S|}}.
$$
Therefore,
\begin{align*} \prod_{\sigma\in\Sigma}\theta(a^\sigma)&< \prod_{\sigma\in\Sigma }(1+|a^\sigma|)
= \sum_{S\subseteq \Sigma} \prod_{\sigma\in S} |a^\sigma| =N_{K/\QQ}(a)+ \sum_{S\subsetneq \Sigma} \prod_{\sigma\in S} |a^\sigma|\\
&\leq \tilde{\epsilon} q + \sum_{S\subsetneq \Sigma}(HT_0)^{|S|} \sqrt[d]{q^{|S|}}\leq \tilde{\epsilon} q + H_1 \sqrt[d]{q^{d-1}}
\intertext{for a suitable $H_1$ depending only on $K$, hence}
\prod_{\sigma\in\Sigma}\theta(a^\sigma) & < \epsilon ' q \hbox{\quad for $q\gg 0$,}
\end{align*}
for a suitable $\epsilon'<1$.\\
Since $|a|_{v_0}^{d_{v_0}}\geq |\frac 1 \pi|_{v_0}^{d_{v_0}}=|N_{K/\QQ}(\pi)|^{-1}_p=q$, we find that $\nu_\tau <1$ for $p\gg 0$. Then the claim follows from Theorem \ref{teo:FCcondition}.
\end{proof}

\begin{remark} We point out that the condition $M(K)<1$ in Theorem \ref{teo:euclidean} is verified for \lq\lq almost all\rq\rq\  norm Euclidean number fields. Indeed, in \cite[Corollary 2]{Cerri2006} it is shown that a norm Euclidean number field with $M(K)=1$ must have unit rank $r=r_1+r_2-1\leq 1$. Since $d=r_1+2r_2$, the only exceptions can occur in the following three cases:
\begin{itemize}
    \item[a)] $K$ is quadratic;
    \item[b)] $K$ is cubic with negative discriminant;
    \item[c)] $K$ is a totally complex quartic field.
\end{itemize} All quadratic norm Euclidean fields have $M(K)<1$: this is easily checked for imaginary quadratic fields;  for real quadratic fields, it is shown in \cite[\S 1.3]{Lezowski2014} that the only $K$ such that $M(K)=1$ is $\QQ(\sqrt{65})$, which is not Euclidean.
On the other hand, Davenport proved in \cite{Davenport1950,Davenport1951} that norm Euclidean fields satisfying $b)$ or $c)$ are finitely many.
\end{remark}

\section{Some more effective results for quadratic fields}

In the case of quadratic fields, we can give more explicit results, proving effective bounds on the prime ideals $\mathfrak{P}$ of $K$ such that $K$ satisfies the $\mathfrak{P}$-adic CFF property. Moreover, in some cases we will show how to construct explicit examples of types satisfying the CFF property. We start our investigation with the case of imaginary quadratic fields.

\subsection{Imaginary norm Euclidean quadratic fields} Let $K=\mathbb{Q}(\sqrt{-D})$ with $D$ a square free integer $>0$. It is known that
$$M(K)= \left\{\begin{array}{ll} \frac {D+1} 4 & \hbox{ if } D\equiv 1,2\pmod 4\\
\frac{ (D+1)^2}{16 D}  &\hbox{ if } D\equiv 3\pmod 4    \end{array}\right. $$
(see for example \cite[Prop. 4.2]{Lemmermeyer1995}).
It follows that the only norm Euclidean quadratic imaginary fields are $\mathbb{Q}(\sqrt{-D})$, with $D=1, 2, 3, 7, 11$ and $M(K)<1$ in each of these cases.  
\begin{proposition}
Let $K=\QQ(\sqrt{-D})$ be a imaginary quadratic norm Euclidean field. Let $\mathfrak{P}$ be a prime ideal of $\calo_K$ with odd residual characteristics. Put $\lambda=\sqrt{N_{K/\QQ}(\mathfrak{P})}$. Then
\begin{itemize}
    \item[a)] if $\sqrt{M(K)}< 1-\frac 1 {\lambda^2} $, then $K$ satisfies the $\mathfrak{P}$-adic $\CFF$ property.
     \item[b)] if $\sqrt{M(K)}< \left(1-\frac 1 {\lambda}\right)^2\left(1+\frac 1 {\lambda}\right)  $, then there exists a special type $\tau=(K,\pi,\mathcal{R})$ satisfying the $\CFF$ property.
\end{itemize}
\end{proposition}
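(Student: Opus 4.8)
The plan is to deduce both statements from the criteria already established in Theorem~\ref{teo:FCcondition} and Theorem~\ref{cor:FEC}, exploiting that $K$ has a single archimedean place, which is complex. Since $K$ is norm Euclidean, $\calo_K$ is a PID, so $\mathfrak{P}=(\pi)$ for some $\pi\in\calo_K$; writing $\Sigma=\{\sigma,\bar\sigma\}$ for the pair of conjugate embeddings, one has $|a^\sigma|=|a^{\bar\sigma}|=\sqrt{|N_{K/\QQ}(a)|}$ for every $a\in K$, hence $|\pi^\sigma|=\sqrt q=\lambda$ for every generator $\pi$, and $\prod_{\sigma\in\Sigma}\theta(a^\sigma)=\theta(a^\sigma)^2$. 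Two elementary facts will be used repeatedly: first, since $m_K(\beta)\le M(K)$ for every $\beta\in K$ and $|N_{K/\QQ}(\beta-\gamma)|=|\beta^\sigma-\gamma^\sigma|^2$, for each $\beta\in K$ there is $\gamma\in\calo_K$ with $|\beta^\sigma-\gamma^\sigma|\le\sqrt{M(K)}$; second, if $|a|_{v_0}>1$ then $|a|_{v_0}^{d_{v_0}}\ge p^{f_{v_0}}=q=\lambda^2$, because $|\cdot|_{v_0}$ takes values in $p^{\ZZ/e_{v_0}}$.

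For part (a) I would construct a $\mathfrak{P}$-adic floor function $s$ directly. Put $s=0$ on the trivial coset; on a coset with $v_0(\alpha)=-k$, $k\ge 0$, I seek a value of the form $a=\pi^{-k}b$ with $b\in\calo_K$ and $\mathfrak{P}\nmid b$. The condition $|\alpha-a|_{v_0}<1$ is equivalent to $b\equiv\pi^k\alpha\pmod{\mathfrak{P}^{k+1}}$, which determines the class of $b$ modulo $\mathfrak{P}^{k+1}=(\pi^{k+1})$; applying the first fact to $b_0/\pi^{k+1}$ (any representative $b_0$ of that class), I choose $b$ in the class with $|b^\sigma|\le\lambda^{k+1}\sqrt{M(K)}$, so that $|a^\sigma|=\lambda^{-k}|b^\sigma|\le\lambda\sqrt{M(K)}$. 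Hence every $a\in\calY_s^1$ satisfies $|a^\sigma|\le\lambda\sqrt{M(K)}$; the hypothesis $\sqrt{M(K)}<1-\lambda^{-2}$ says precisely that $\lambda\sqrt{M(K)}<\lambda-\lambda^{-1}=\theta^{-1}(\lambda)$, so $\theta(a^\sigma)<\lambda$ uniformly, and together with the second fact
$$
\nu_\tau=\sup_{a\in\calY_s^1}\frac{\theta(a^\sigma)^2}{|a|_{v_0}^{d_{v_0}}}\le\frac{\theta\!\left(\lambda\sqrt{M(K)}\right)^2}{\lambda^2}<1.
$$
Theorem~\ref{teo:FCcondition}(b) then gives that $K$ has the $\mathfrak{P}$-adic $\CFF$ property.

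For part (b) I would instead exhibit a special type and invoke Theorem~\ref{cor:FEC}. Fix any generator $\pi$, so $\lambda_\sigma=|\pi^\sigma|=\lambda>1$ (using that the residual characteristic is odd). Build $\mathcal{R}$ by selecting, for each class of $\calo_K/\mathfrak{P}$, a representative of minimal complex absolute value, retaining $0$ for the zero class; the first fact applied to $c_0/\pi$ shows $L_\sigma=\max_{c\in\mathcal{R}}|c^\sigma|\le\lambda\sqrt{M(K)}$. The hypothesis $\sqrt{M(K)}<(1-\lambda^{-1})^2(1+\lambda^{-1})$ is equivalent to $\lambda\sqrt{M(K)}<(\lambda-1)(1-\lambda^{-2})$, hence $L_\sigma<(\lambda_\sigma-1)(1-\lambda_\sigma^{-2})$ for both embeddings, and Theorem~\ref{cor:FEC}(b) gives that $\tau=(K,\pi,\mathcal{R})$ satisfies $\CFF$.

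The main point requiring care is verifying that the function $s$ built in part (a) is indeed a $\mathfrak{P}$-adic floor function. Conditions (a), (b) and (d) of that definition are built into the construction: $a\equiv\alpha\pmod{\mathfrak{P}\calo_{v_0}}$ by the choice of the class of $b$, $a=\pi^{-k}b\in\calo_{K,\{v_0\}}$ since $\mathfrak{P}\nmid b$ and $\pi$ is a unit at all non-archimedean places other than $v_0$, and $s$ is constant on cosets because it is defined coset by coset. The one delicate axiom is idempotence $s^2=s$: one must make the representative selections consistently, so that the value $a$ chosen for a coset is returned when the recipe is applied to $a$ itself; this is handled exactly as in the proof of Theorem~\ref{teo:euclidean}.
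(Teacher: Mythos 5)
Your proof is correct and follows essentially the same route as the paper: for part (a) you build a floor function whose values satisfy $|a^\sigma|\le\lambda\sqrt{M(K)}<\lambda-\lambda^{-1}$ so that $\theta(a^\sigma)<\lambda$ and $\nu_\tau<1$, and for part (b) you pick norm-minimal coset representatives to get $L_\sigma\le\lambda\sqrt{M(K)}<(\lambda-1)(1-\lambda^{-2})$ and invoke Theorem~\ref{cor:FEC}(b). Your more explicit description of $s$ via $a=\pi^{-k}b$ with $b$ determined modulo $\mathfrak{P}^{k+1}$ just unpacks the construction the paper imports from Theorem~\ref{teo:euclidean}, so there is no substantive difference.
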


\begin{proof} Let $\mathfrak{P}$ be a prime ideal of $\calo_K$ with odd residual characteristic $p$, and let $\pi$ be a generator of $\mathfrak{P}$. First, notice that $N_{K/\QQ}(\pi)$ is either $p$ or $p^2$, according to the decomposition  of $p$ in $\mathcal{O}_K$. \\
\noindent $a$) Assume $\sqrt{M(K)}< 1-\frac 1 {\lambda^2}$. Since $K$ is norm Euclidean, for every $ \alpha\in K_{v_0}$ there is a representative $\beta$ of $\frac \alpha\pi\pmod{\calo_{v_0}}$ such that $\beta\in \calo_{K,\{v_0\}}$ and  $|N_{K/\QQ}(\beta)|_\infty \leq M(K)$. Therefore, applying the same construction as in Theorem \ref{teo:euclidean}, there is a type $\tau=(K,\mathfrak{P},s)$ such that $|N_{K/\QQ}(a)|_\infty \leq M(K)|N_{K/\QQ}(\pi)|_{\infty}$ for every $a\in\calY_s$ such that $|a|_{v_0}>1$, that is $|a|_\infty \leq \sqrt{M(K)}\lambda<\lambda-\frac 1\lambda$, hence
$$\theta(a)\leq \theta(\sqrt{M(K)}\lambda)<\lambda, $$ and the claim follows by Theorem \ref{teo:FCcondition} $b$). \\
\noindent$b$) Arguing as above we see that there is a complete set of representatives $\mathcal{R}$  of $\mathcal{O}_K/\mathfrak{P}$ such that $|c|_\infty\leq \sqrt{M(K)}\cdot \lambda$ for every $c\in\mathcal{R}$. Let $L=\max\{|c|_\infty\ |\ c\in\mathcal{R}\}$. Then by hypothesis
$$L< (\lambda - 1)\left (1-\frac 1 {\lambda^2}\right ),$$
and if $\sigma$ is the complex conjugation, then $|x^\sigma|_\infty = |x|_\infty$ for every $x\in K$, therefore Theorem \ref{cor:FEC} $b$) can be applied to the type $(K,\pi,\mathcal{R})$, concluding the proof.
\end{proof}
The following list summarises the behaviour of CFF property for imaginary norm Euclidean fields $K=\QQ(\sqrt{-D})$:
\begin{center}
 \begin{tabular}{ l l | c| c | c |c |c} \\ 
 & $D$ & 1 & 2 & 3 & 7 & 11\\
 \hline
 $\CFF$ property & for $p\geq $ & 3 & 5 & 2 & 3& 7\\
 $\CFF$ special type & for $p\geq $& 7& 23& 11 & 13& 127
 \end{tabular}
\end{center}
\medskip
\subsection{Real norm Euclidean quadratic fields: some explicit constructions}

It is well known that a real quadratic field $\QQ(\sqrt{D})$ is norm Euclidean if and only if $D=2, 3, 11, 13, 17, 19, 21, 29, 33, 37, 41, 57, 73$ (see for example \cite{Hardy_Wright}). In \cite{Eggleton1992}, the authors give an explicit proof of the theorem by showing that, in each of these cases, the fundamental region is covered by (finitely many) unit neighborhoods of the plane, giving the precise list for every of these fields. Using this in combination with the construction of the proof of Theorem \ref{teo:euclidean}, one can show how to  construct explicitly a $\mathfrak{P}$-adic floor function for a prime ideal $\mathfrak{P}$ of $\calo_K$.

Let $K=\QQ(\sqrt{D})$ be a real norm Euclidean field; we consider the plane embedding given by
\begin{align*} j: K& \longrightarrow  \RR^{2}\\\
a+b\sqrt D &\longmapsto (a,b);
\end{align*}
this gives a representation of the elements of $K$ as the points of the plane with rational coordinates.

Under this plane embedding, the algebraic integers correspond to the lattice points $\ZZ^2$, if $D \equiv 2,3 \pmod 4$, and to the mid-lattice points $\frac{1}{2}\ZZ^2$ if $D \equiv 1 \pmod 4$.

For any $\lambda \in \calo_K$, we define the neighborhood of $\lambda$ in $K$ of radius $\epsilon$ to be the set
$$ V_{\epsilon}(\lambda)=\{\beta \in \QQ(\sqrt{D})\ \ |\ \ |N_{K/\QQ}(\beta-\lambda)| < \epsilon\}; $$
using the plane embedding, this maps to 
$$ V_{\epsilon}(x,y)=\{(r,s)\in \QQ^2 \ \ |\ \ |(r-x)^2-D(s-y)^2|<\epsilon\}, $$
where $(x,y)=j(\lambda)$. Notice that these are infinite $X$-shaped regions in the plane bounded by conjugate hyperbolas.

It is then clear that, since we are assuming $K$ norm Euclidean, each $\beta \in \QQ(\sqrt D)$ lies in the neighborhood $V_{\epsilon}(\lambda)$ for some $\lambda \in \calo_K$, i.e. each point $(r,s)\in \QQ^2$ lies in some neighborhood $V_{\epsilon}(x,y)$ in the plane, where $(x,y)=j(\lambda)$ for some $\lambda \in \calo_K$. 
\medskip

Let $\mathfrak{P}$ be a prime ideal in $\calo_K$; we can associate to every generator $\pi \in \mathfrak{P}$ a type $\tau_{\pi}=(\QQ, \mathfrak{P}, s_{\pi})$, where the floor function $s_{\pi}$ is defined by the following algorithm.
Given a coset $\alpha + \mathfrak{P} \calo_{v_0}$ in $K_{v_0}$, we can find, by strong approximation, an element $\alpha'\in K$ belonging to this coset such that $|\alpha'|_{v}<1$ for every non-archimedean $v\in \calM_K \setminus \{v_0\}$; in particular, $\alpha' \in \calo_K[\frac{1}{\pi}]$. We can now translate $\frac{\alpha'}{\pi}$ by a suitable element  $\mu\in \calo_K$ so that $i(\beta):=i(\alpha'-\mu)$ belongs to the region
$$ F(D):=\left \{(r,s)\in \QQ^2 \ \ |\ \ -\frac{1}{2}<r\le \frac{1}{2},\ -\frac{1}{2}<s\le \frac{1}{2}\right \}, $$
and such $\beta$ is unique. We call $F(D)$ fundamental region. Notice that $\alpha'\equiv \pi \beta \pmod {\mathfrak{P}}$. By \cite{Eggleton1992}, we have that $F(D)$ is covered by a finite number of neighborhoods or radius $\epsilon <1$ (depending of $D$) $V_{\epsilon}(x_k,y_k)$; hence, $j(\beta)$ lies in (almost) one of these neighbourhood. We choose a neighbourhood $V_{\epsilon}(x', y')$ such that $j(\beta)$ lies in it and, for every $\gamma \in \alpha + \mathfrak{P}\calo_{v_0}$, we put 
$$ s_{\pi}(\gamma):= \pi(\beta-j^{-1}(x', y')). $$

\begin{example}
Let us consider the case $D=17$; since $D\equiv 1 \pmod 4$, then $\calo_K=\ZZ\left [ \frac{1+\sqrt D}{2} \right ]$. 
Let us divide the fundamental region $F(17)$ into six subsets, namely:
\begin{itemize}
    \item $F_1=\{(x,y)\in \QQ^2 \ | \ 0<r\le 1/2,\ -1/4<s\le 1/4\};$
    \item $F_2=\{(x,y)\in \QQ^2 \ |\ -1/2<r\le 0,\ -1/4<s\le 1/4\};$
    \item $F_3=\{(x,y)\in \QQ^2 \ |\ 0<r\le 1/2,\ 1/4<s\le 1/2\};$
    \item $F_4=\{(x,y)\in \QQ^2 \ |\ 0<r\le 1/2,\ -1/2<s\le -1/4\};$
    \item $F_5=\{(x,y)\in \QQ^2 \ |\ -1/2<r\le 0,\ 1/4<s\le 1/2\};$
    \item $F_6=\{(x,y)\in \QQ^2 \ |\ -1/2<r\le 0,\ -1/2<s\le -1/4\}$.
\end{itemize}
Then, $F(17)$ is equal to the union of these regions, and the union is disjoint, hence every $\beta \in F(17)$ belongs to one $F_k$. We have now to associate to every $F_k$ a unit neighborhood $V(x,y)$ that covers the corresponding region; this can be of course done in many ways. 

We use an argument analogous to \cite{Eggleton1992}. 
By easy calculations, we have that the point $(1/2, 1/4)\in F_1$ lies of the top boundary of the neighborhood $V_{13/16}(1,0)$, hence the preimage of every point in $F_1$ satisfies $N_{K/\QQ}(\beta-1)\le 13/16$. Similarly, the point $(1/2,1/4)$ lies on the bottom boundary of the neighborhood $V_{13/16}(1,1/2)$, hence $F_3$ is contained in its closure. Using the symmetry properties of $F(17)$, it is easy to see that $F_2 \subset \overline{V_{13/16}(-1,0)}$, $F_4\subset \overline{V_{13/16}(1,-1/2)}$, $F_5\subset \overline{V_{13/16}(-1,1/2)}$ and $F_6\subset \overline{V_{13/16}(-1,-1/2)}$. 
For every $k=1, \ldots, 6$, let us denote by $\delta_k$ the preimage in $\calo_K$ of the center of the corresponding neighborhood, i.e. $\delta_k:=j^{-1}(x_k,y_k)$.
Using this, we can perform the algorithm described above. 

Given a prime ideal $\mathfrak{P} \subset \calo_{v_0}$, choose a suitable generator $\pi$ of $\mathfrak{P}\calo_{v_0}$. Then, for every coset $\alpha+\mathfrak{P}\calo_{v_0}$, choose $\alpha' \in \alpha+\mathfrak{P}\calo_{v_0} \cap \calo_K[1/\pi]$, and translate it by an element $\mu\in \calo_K$ so that the image of $\beta:=\alpha-\mu$ lies in the fundamental region; then $j(\beta)\in F_k$ for some $k=1, \ldots, 6$. 

Take any $\gamma\in \alpha+\mathfrak{P}\calo_{v_0}$; then, we denote by 
$$ s_{\pi}(\gamma):=\pi (\beta-\delta_i). $$ 

Let us show for example that, if $p$ is an odd prime which is inert in $\calo_K$, then this choice of the floor function gives rise to a type satisfying $\CFF$ property.

If $p$ is inert, then we can take $\pi=p$, hence $N_{K/\QQ}(\mathfrak{P})=p^2$. To apply Theorem \ref{teo:FCcondition}, we have to estimate $\theta(a^{\sigma})$ for every $a\in \calY_s$ such that $|a|_{v_0}>1$ and every embedding $\sigma$ of $K$ into $\RR$, which are exactly the identity and the one sending $\sqrt{17}$ to $-\sqrt{17}$. By the above choice of the neighborhoods covering the fundamental region and the corresponding construction of the floor function, we have that, for every $\beta\in F_k$ and for every centre of the corresponding neighborhood $\delta_k$, $|\beta^{\sigma}-\delta_k^{\sigma}|\le \sqrt{5}/4$, and $N_{K/\QQ}(\beta^{\sigma}-\delta_k^{\sigma})\le 1/4$, hence for every $a$, we have 
$N_{K/\QQ}(a)\le \frac{13}{16}p^2$.

It follows that
$$ \prod_{\sigma \in \Sigma}\theta(a^{\sigma}) < \prod_{\sigma\in \Sigma}(1+|a^{\sigma}|)\le 1+\frac{\sqrt 5}{2}p + \frac{13}{16}p^2.$$
Since $|a|_p^{d_p}\ge p^2$, we have that $\nu_{\tau_\pi}<1$ if 
$$ 1+\frac{\sqrt 5}{2}p + \frac{13}{16} p^2< p^2,$$
which holds for every prime $p\ge 3$.
We finally point out that a similar argument involving another choice of the generator of the prime ideal $\mathfrak P$ can be used in the case where $p$ split, as done for example in Lemma \ref{lem:199}.
\end{example}

\subsection{The $\CFF$ property for $\QQ \left (\sqrt{2} \right )$}

It is well known that $K=\QQ(\sqrt{2})$  is norm Euclidean. We can regard $K$ as a subfield of $\RR$, so that $\Sigma=\{id,\sigma\}$, and $\sigma$ is the embedding sending $\sqrt 2$ in $-\sqrt 2$. The fundamental unit is $u=1+\sqrt{2}$. \\
This section is devoted to prove the following result.
\begin{theorem}\label{teo:Qsqrt2}
The field $\QQ(\sqrt{2})$ has the $\mathfrak{P}$-adic $\CFF$ property, for every prime ideal $\mathfrak{P}$ of odd residual characteristics.
\end{theorem}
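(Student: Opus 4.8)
The plan is to exhibit, for each prime ideal $\mathfrak{P}$ of $\calo_K$ of odd residual characteristic $p$, a $\mathfrak{P}$-adic floor function $s$ whose type $\tau=(K,\mathfrak{P},s)$ satisfies $\nu_\tau<1$, so that Theorem~\ref{teo:FCcondition}(b) yields the $\CFF$ property. I use the plane embedding $j\colon a+b\sqrt2\mapsto(a,b)$, under which $\calo_K$ is the lattice $\ZZ^2$ and a fundamental region is the half-open square $F=\{(r,s)\in\QQ^2:-\tfrac12<r\le\tfrac12,\ -\tfrac12<s\le\tfrac12\}$. Coordinate-wise rounding shows that every $\beta\in K$ has a unique representative $\beta_0$ modulo $\calo_K$ with $j(\beta_0)\in F$, and then $|N_{K/\QQ}(\beta_0)|\le\tfrac12$ (so $m_K(\alpha)\le\tfrac12$ for every $\alpha$; this is where norm-Euclideanity of $\QQ(\sqrt2)$ enters) and $|\beta_0^\tau|\le\tfrac{1+\sqrt2}{2}$ for both $\tau\in\Sigma$. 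Since $\QQ(\sqrt2)$ is unramified at every odd prime, $p$ is either inert or split; I treat the two cases separately, the point being that the denominator $|a|_{v_0}^{d_{v_0}}$ in $\nu_\tau$ is $\ge p^2$ when $p$ is inert but only $\ge p$ when $p$ splits.

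\textbf{Inert case.} Put $\pi=p$ and define $s$ as in the proof of Theorem~\ref{teo:euclidean} with the trivial centre: given a nontrivial coset of $\mathfrak{P}\calo_{v_0}$, pick by strong approximation a representative $\alpha'\in\calo_{K,\{v_0\}}$, let $\beta$ be the rounded representative of $\alpha'/p$ modulo $\calo_K$ (so $j(\beta)\in F$), and set $s$ equal to $p\beta$ on that coset. Every $a\in\calY_s^1$ then equals $p\beta$ for some $\beta$ with $j(\beta)\in F$, so $\prod_{\tau\in\Sigma}\theta(a^\tau)=\theta(p\beta)\,\theta(p\beta^\sigma)$ while $|a|_{v_0}^{d_{v_0}}\ge p^2$. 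It remains to maximise $\theta(p\beta)\theta(p\beta^\sigma)$ over $\beta\in F$; using the identities $(\log\theta)'(x)=(x^2+4)^{-1/2}$ and $(\log\theta)''(x)<0$ for $x>0$, a short analysis of the interior critical points and of the boundary of $F$ shows the maximum is attained at the ``deepest hole'' $\beta=\tfrac{\sqrt2}{2}$ (that is $j(\beta)=(0,\tfrac12)$), where the value is $\theta\!\left(\tfrac{p}{\sqrt2}\right)^2=\tfrac14\bigl(p^2+4+p\sqrt{p^2+8}\bigr)$. One checks that this is $<p^2$ precisely when $p^4-4p^2+2>0$, i.e. when $p^2>2+\sqrt2$, hence for every odd prime $p$; thus $\nu_\tau<1$.

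\textbf{Split case.} Now $\mathfrak{P}\bar{\mathfrak{P}}=(p)$ and $N_{K/\QQ}(\mathfrak{P})=p$. Using the fundamental unit $1+\sqrt2$ (equivalently Lemma~\ref{lem:univin}) I choose a generator $\pi$ of $\mathfrak{P}$ as balanced as the unit group permits, so that $\max(|\pi|,|\pi^\sigma|)\le\sqrt{(1+\sqrt2)p}$. Defining $s$ by the same recipe with $a=\pi\beta$, one has $\prod_{\tau}\theta(a^\tau)=\theta(\pi\beta)\theta(\pi^\sigma\beta^\sigma)$ and $|a|_{v_0}^{d_{v_0}}\ge p$, so the requirement is $\theta(\pi\beta)\theta(\pi^\sigma\beta^\sigma)<p$ for all $\beta\in F$. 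Since $|\pi\beta|\cdot|\pi^\sigma\beta^\sigma|=p\,|N_{K/\QQ}(\beta)|\le p/2$ and the balance of $\pi$ together with $|\beta^\tau|\le\tfrac{1+\sqrt2}{2}$ keeps both $|\pi\beta|$ and $|\pi^\sigma\beta^\sigma|$ of order $\sqrt p$ with bounded product, plugging into the closed form $\theta(u)\theta(v)=\tfrac14\bigl(uv+\sqrt{(uv)^2+4u^2}+\sqrt{(uv)^2+4v^2}+\sqrt{(uv)^2+4(u^2+v^2)+16}\bigr)$ yields $\theta(\pi\beta)\theta(\pi^\sigma\beta^\sigma)\le p/2+O(1)$, which is $<p$ for all but finitely many split primes. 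For each of the finitely many remaining small split primes (the smallest being $p=7$) I replace the trivial centre by a finite covering of $F$ by subregions, each equipped with a nearby centre $\delta_k\in\calo_K$ chosen to control simultaneously $|N_{K/\QQ}(\beta-\delta_k)|$ and the two conjugate sizes $|\beta^\tau-\delta_k^\tau|$ — exactly the device used for $\QQ(\sqrt{17})$ above — and verify $\nu_\tau<1$ by a direct computation.

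The delicate part is the split case: the $\mathfrak{P}$-adic contribution to $\nu_\tau^{-1}$ is only $p$, so the archimedean estimate must be essentially sharp, and the genuine obstacle is the optimisation over the fundamental square $F$ for the smallest split prime $p=7$ (and, to a lesser degree, $p=17,23$), where the inequality $\theta(\pi\beta)\theta(\pi^\sigma\beta^\sigma)<p$ does hold but only by a small margin, so one must either evaluate the supremum over $F$ exactly or resort to the explicit Eggleton-type covering just described.
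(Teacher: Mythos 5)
Your overall strategy coincides with the paper's: construct the floor function by coordinate-wise rounding into the fundamental square, bound $\prod_\sigma\theta(a^\sigma)$ against $|a|_{v_0}^{d_{v_0}}$, and invoke Theorem~\ref{teo:FCcondition}(b); the paper likewise splits into a generic estimate that works for large $p$ (its Lemma~\ref{lem:199}) and a separate treatment of the small residual characteristics. Your inert case is in fact handled more sharply than in the paper: the paper's crude bound $(|a|+1)(|a^\sigma|+1)\le |N(a)|+|a|+|a^\sigma|+1$ fails for $p=3,5$ inert and forces those primes into its delicate final proposition, whereas your direct evaluation of the supremum at the deepest hole $\beta=\sqrt2/2$, giving $\theta(p/\sqrt2)^2=\tfrac14(p^2+4+p\sqrt{p^2+8})<p^2$ for all odd $p$, disposes of every inert prime at once. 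That said, the claim that the maximum of $\theta(p\beta)\theta(p\beta^\sigma)$ over the square is attained at $(0,\tfrac12)$ is asserted with only a one-line sketch; since for fixed product $uv$ the quantity $\theta(u)\theta(v)$ actually \emph{increases} as the factors are unbalanced, the location of the maximum is not obvious and the boundary analysis needs to be written out (it does appear to check out numerically).

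The genuine gap is the split case for $p\in\{7,17,23\}$ (and $31$, which also survives your improved balancing $\max(|\pi|,|\pi^\sigma|)\le\sqrt{(1+\sqrt2)p}$). This is precisely where the paper has to work hardest: for these primes the generic inequality $uv+u+v+1<p$ is false for \emph{every} choice of generator, and the paper's proof proceeds by fixing a generator minimizing $\lambda+\lambda_\sigma$, reducing to the two one-variable functions $Z_1(X)=\theta(X\pi)\theta((\sqrt2-X)\pi^\sigma)$ and $Z_2(X)=\theta(X\pi)\theta((X-1)\pi^\sigma)$ on explicit intervals, locating the unique critical point of $Z_2$, and checking four explicit values against $\epsilon p^f$. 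You correctly identify this as the crux, but you do not carry it out: "verify $\nu_\tau<1$ by a direct computation" using an Eggleton-type covering is a promissory note, and it is not shown that any covering of $F$ by neighbourhoods with nontrivial centres actually achieves the required margin for $p=7$, where the inequality holds only barely. Since the truth of the theorem for these three primes rests entirely on that unexecuted verification, the proof is incomplete as written; the exact optimisation over the parallelogram (the route the paper takes) is the step that must be supplied.
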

Let $\mathfrak{P}$ be a prime ideal in $\calo_K$ with  residual characteristics $p>2$.  We can  associate to every generator $\pi$ of $\mathfrak{P}$ a (not uniquely determined) type $\tau_\pi=(\QQ,\mathfrak{P},s_\pi)$  as follows: 
choose a coset $\alpha+\mathfrak{P}\calo_{v_0}$  in $ K_{v_0}$; choose $b\in K$  such that $|b-\alpha|_{v_0}< 1$ and $|b|_v\leq 1 $ for every non archimedean $v\in\calM_K\setminus\{v_0\}$; then we can write $b=\frac A {\pi^k}$ for some $k\geq 1$. By dividing $A$ by $\pi^{k+1}$, we can find $\beta\in\calo_K$ such that 
$\frac A {\pi^{k+1}}=\beta+\gamma$
with $\gamma=x+y\sqrt{2}$, $x,y\in\QQ$, $|x|_\infty, |y|_\infty\leq \frac 1 2 $. Then, we see that $\gamma\in \calo_K[\frac 1 \pi]$ and, if we put $a=\pi\gamma$, we find $|a-\alpha|_{v_0}< 1$ and $|a|_v\leq 1 $ for every non archimedean $v\in\calM_K$. We define $s_\pi(\alpha+\mathfrak{P})=a$.\\

We denote by $N_{K/\QQ}(\mathfrak{P})=p^f$ ($f\in\{1,2\}$), and we put $\lambda=|\pi|_\infty$, $\lambda_\sigma=|\pi^\sigma|_\infty=\frac {p^{f}} \lambda $. Then we have
$$|a|_\infty \leq \frac 1 2\left (1+\sqrt{2} \right )\lambda,\quad |a^\sigma |_\infty \leq \frac 1 2 \left (1+\sqrt{2} \right )\lambda_\sigma;$$ and
$$N_{K/\QQ}(a)=p^f(x^2-2y^2)\leq \frac 1 2 p^f.$$
Hence,
\begin{align*}
    \theta(a)\theta(a^\sigma)<(|a|_\infty+1)(|a^\sigma|_\infty+1) &\leq |N_{K/\QQ}(a)|_\infty +\ |a|_\infty + |a^\sigma |_\infty+1\\
    &\leq \frac 1 2 p^f+ \frac 1 2 \left (1+\sqrt{2} \right ) \left (\lambda +\frac {p^f} \lambda \right )+1.
\end{align*}
By imposing the last quantity to be less than $p^f$, we obtain
\begin{equation*}\label{eq:disug} 
\lambda+\frac {p^f}\lambda< (\sqrt{2}-1)(p^f -2),
\end{equation*}
that is $F_p(\lambda)< 0$, where
$$F_p(X)=X^2-(\sqrt{2}-1)(p^f -2)X+p^f.$$
It follows by Theorem \ref{teo:FCcondition} that the type $\tau_\pi$ has the $\CFF$ property for every $\lambda$ satisfying $F_p(\lambda)<0$.    

\begin{lemma}\label{lem:199} Assume that the residual characteristics $p$ satisfies:
\begin{itemize}
    \item $p\geq 41$ if $p$ splits  in $\calo_K$; 
    \item $p\geq 11$ if $p$ is inert  in $\calo_K$; 
\end{itemize} then, the $\mathfrak{P}$-adic $\CFF$ property holds for $K$.
\end{lemma}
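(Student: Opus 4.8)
The plan is to feed everything into the numerical criterion established just above this statement: for any generator $\pi$ of $\mathfrak{P}$, writing $\lambda=|\pi|_\infty$ and $p^f=N_{K/\QQ}(\mathfrak{P})$, the type $\tau_\pi$ satisfies $\CFF$ as soon as $F_p(\lambda)<0$, where $F_p(X)=X^2-(\sqrt2-1)(p^f-2)X+p^f$. Since $\calo_K$ is a PID, $\mathfrak{P}=(\pi_0)$ is principal, and as $\pi$ ranges over the generators of $\mathfrak{P}$ the value $|\pi|_\infty$ ranges exactly over the geometric progression $\{\lambda_0u^n\ :\ n\in\ZZ\}$, where $\lambda_0=|\pi_0|_\infty$ and $u=1+\sqrt2$ is the fundamental unit (using $\calo_K^\times=\{\pm u^n\}$ and $u>0$). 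Thus the task reduces to showing that, under each hypothesis, some term of this progression lies strictly between the two (positive) real roots $\rho_1\le\rho_2$ of $F_p$; note $\rho_1\rho_2=p^f$.

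In the inert case $f=2$ and $\mathfrak{P}=p\calo_K$, so I would simply take $\pi=p$, giving $\lambda=p$. Since $\rho_1\rho_2=p^2$, this $\lambda$ is the geometric mean of the two roots, so $F_p(p)=p\bigl(2p-(\sqrt2-1)(p^2-2)\bigr)<0$ exactly when the roots are distinct, i.e. when $g(p):=(\sqrt2-1)(p^2-2)-2p>0$. The function $g$ is increasing for $p\ge3$ and positive at $p=7$, hence positive for all $p\ge7$; in particular $F_p(p)<0$ for every inert prime $p\ge11$, and the $\mathfrak{P}$-adic $\CFF$ property follows.

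In the split case $f=1$ and $\rho_1\rho_2=p$. Here one cannot take $\lambda=\sqrt p$: if $\pi=a+b\sqrt2$ had $|\pi|_\infty=\sqrt p$, then also $|\pi^\sigma|_\infty=\sqrt p$, forcing $ab=0$ and then $a^2=p$ or $2b^2=p$, impossible. Instead I would argue that a geometric progression of common ratio $u$ must meet the open interval $(\rho_1,\rho_2)$ whenever $\rho_2/\rho_1>u$. Writing $\rho_2=\sqrt p\,e^{\delta}$ and $\rho_1=\sqrt p\,e^{-\delta}$ with $\delta\ge0$, the identity $\rho_1+\rho_2=(\sqrt2-1)(p-2)$ becomes $2\sqrt p\cosh\delta=(\sqrt2-1)(p-2)$, so $\rho_2/\rho_1=e^{2\delta}>u$ is equivalent to $\cosh\delta>\tfrac12(\sqrt u+1/\sqrt u)$. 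Using $u^{-1}=\sqrt2-1$ one has $(\sqrt u+1/\sqrt u)^2=u+u^{-1}+2=2(1+\sqrt2)$, so the condition rewrites as $(\sqrt2-1)(p-2)>\sqrt{2(1+\sqrt2)p}$, i.e. $h(p):=(3-2\sqrt2)(p-2)^2-2(1+\sqrt2)p>0$. Since $h$ is increasing for $p\ge17$ and positive at $p=41$, it is positive for all $p\ge41$, which produces a generator $\pi$ with $F_p(|\pi|_\infty)<0$ and hence the $\mathfrak{P}$-adic $\CFF$ property.

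The genuinely delicate point is the split case. For inert primes no adjustment is needed, because the natural generator $p$ already sits at the geometric mean of the admissible interval and the discriminant condition alone suffices; but for split primes $|\pi|_\infty$ is always bounded away from $\sqrt p$, and one must slide it by powers of the fundamental unit into the window $(\rho_1,\rho_2)$. This pigeonhole step succeeds only when the window is multiplicatively wider than $u=1+\sqrt2$, which is exactly what the threshold $p\ge41$ encodes; the remainder is the routine monotonicity check of $g$ and $h$.
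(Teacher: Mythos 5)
Your proof is correct, and it rests on the same criterion the paper sets up just before the lemma (namely that $\tau_\pi$ satisfies $\CFF$ whenever $F_p(\lambda)<0$ for $\lambda=|\pi|_\infty$), but in the split case you take a genuinely different and in fact sharper route. The paper normalizes the generator into the fixed window $\tfrac{\sqrt p}{u}<\lambda\le\sqrt p$ and asks for $F_p<0$ on all of that window; since the window's lower endpoint can sit as low as $\sqrt p/u$, this forces the root ratio $\rho_2/\rho_1$ to exceed $u^2$, which only happens for split $p\ge 71$, and the paper then disposes of $p=31,41,47$ by exhibiting explicit generators ($1+4\sqrt2$, $3+5\sqrt2$, $5+6\sqrt2$). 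You instead use the full geometric progression $\{\lambda_0u^n\}$ of generator absolute values and a pigeonhole: a progression of ratio $u$ meets the open interval $(\rho_1,\rho_2)$ as soon as $\rho_2/\rho_1>u$, and your computation correctly converts this to $h(p)=(3-2\sqrt2)(p-2)^2-2(1+\sqrt2)p>0$, which holds for all $p\ge 41$ by monotonicity from $p=17$ and positivity at $p=41$ (note $h(p)>0$ also forces the discriminant of $F_p$ to be positive, so the roots are automatically real and distinct). This eliminates the case-by-case check entirely, at the mild cost of not naming the generator. The inert case is identical in both treatments: $\pi=p$ is the geometric mean of the roots, and $g(p)>0$ for $p\ge 7$ covers all inert $p\ge 11$.
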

\begin{proof}
If $p$ is inert in $\calo_K$, take $\pi=p$ in the above considerations. Then $\lambda=p$ and we see that $F_p(p)<0$ except for $p=3,5$. Therefore the type $\tau_p$ satisfies the $\CFF$ property.\\ Assume now that $p$ splits and let $\pi$ be the unique generator of $\mathfrak{P}$ such that $0<\pi\leq  \sqrt{p}$ and $u\pi>\sqrt{p}$. It follows that $\frac {\sqrt{p}} u <\lambda <\sqrt{p}$ and is straightforward to verify that $F_p(\lambda)< 0$ for $p\geq 71$. Therefore, the type $\tau_\pi$ satisfies the $\CFF$ property for a splitting $p\geq 71$.\\
It remains to consider the cases $p\in\{31,41,47\}$, $p$ splitting. In this cases it is straightforward to see that by setting $\pi=1+4\sqrt{2}, 3+5\sqrt{2}, 5+6\sqrt{2}$ respectively we fulfill the requirement $F_p(\lambda)<0$, so that we get types $\tau_\pi$ satisfies the $\CFF$ property also in these last cases.
\end{proof}

In order to complete the proof of Theorem \ref{teo:Qsqrt2} it remains to consider the residual characteristics 
$p$ in the set $S=\{3,5,7,17,23\}$. Notice that
\begin{itemize}
    \item $p$ is inert for $p=3,5$;
    \item $p$  splits for $p=7,17,23$.
\end{itemize}
Moreover, in these cases $F_p(X)$ is strictly positive over $\RR$, so that the above technique is not applicable. For the prime in $S$ the following proposition holds.
\begin{proposition}
Let $p\in S$  and $\mathfrak{P}$ be a prime ideal in $\calo_K$ with residual characteristics $p$. There exists a generator $\pi$ of $\mathfrak{P}$ such that the type $\tau_\pi=(\QQ(\sqrt{2}),\mathfrak{P},s_\pi)$ satisfies the $\CFF$ property. 
\end{proposition}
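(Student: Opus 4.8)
The height-decrease machinery of Theorem~\ref{teo:FCcondition} cannot be applied verbatim here: since $F_p(X)>0$ for all $X\in\RR$, the estimate $\theta(a)\theta(a^\sigma)<\frac12 p^f+\frac{1+\sqrt2}{2}(\lambda+\lambda_\sigma)+1$ established above never forces $\nu_{\tau_\pi}\le 1$. The plan is to observe that this estimate is wasteful only for the partial quotients $a\in\calY_{s_\pi}^1$ of minimal $v_0$-adic size, namely those with $|a|_{v_0}=p$ (equivalently, of $v_0$-adic valuation $-1$), and to treat these separately. First I would check that every $a=\pi\gamma\in\calY_{s_\pi}^1$ with $|a|_{v_0}\ge p^2$ already satisfies $\theta(a)\theta(a^\sigma)<|a|_{v_0}^{d_{v_0}}$: indeed $|a|_\infty\le\frac{1+\sqrt2}{2}\lambda$, $|a^\sigma|_\infty\le\frac{1+\sqrt2}{2}\lambda_\sigma$ and $|N_{K/\QQ}(a)|\le\frac12 p^f$ exactly as before, whereas now $|a|_{v_0}^{d_{v_0}}\ge p^{2f}$; choosing $\pi$ as furnished by Lemma~\ref{lem:univin} (or, when $\mathfrak{P}$ splits, with $0<\pi\le\sqrt p$ as in Lemma~\ref{lem:199}) keeps $\lambda+\lambda_\sigma$ of order $\sqrt{p^f}$, so the required inequality reduces to $\frac12 p^f+O_K(\sqrt{p^f})+1<p^{2f}$, which holds for every $p\ge 3$. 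Hence only the valuation-$-1$ partial quotients can obstruct $\nu_{\tau_\pi}<1$.

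Next I would show that these form an explicit finite set over which one still has freedom in the choice of $s_\pi$. If $a=\pi\gamma$ with $|a|_{v_0}=p$ then $v_0(\gamma)=-2$, and since $\gamma\in\calo_{K,\{v_0\}}$ we get $\pi^2\gamma\in\calo_K$ with $\pi\nmid\pi^2\gamma$; writing $\gamma=x+y\sqrt2$ with $|x|_\infty,|y|_\infty\le\frac12$ and expanding $\pi^2\gamma$ in the basis $\{1,\sqrt2\}$ gives a linear system with integer right-hand side and determinant $N_{K/\QQ}(\pi)^2=p^{2f}$, so $(x,y)\in\frac1{p^{2f}}\ZZ^2$; intersecting with $|x|_\infty,|y|_\infty\le\frac12$ leaves only finitely many $\gamma$, hence finitely many such $a$. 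Moreover $\gamma$ may be replaced by any representative of its class modulo $\calo_K$, i.e. $i(\pi\gamma)$ by any point in its coset of the lattice $i(\mathfrak{P})$ in $\RR^2$, and one selects the representative with smallest coordinates (using, as in the worked example for $D=17$, that a fundamental domain of $i(\mathfrak{P})$ is covered by narrow unit-neighbourhoods). For each of the finitely many primes $p\in S$ — $\mathfrak{P}$ inert for $p=3,5$, split for $p=7,17,23$ — and each prime ideal above it, I would then choose $\pi$ together with this refined floor function so that every valuation-$-1$ partial quotient $a$ satisfies $(|a|_\infty+1)(|a^\sigma|_\infty+1)\le p^f$; since $\theta(t)<|t|_\infty+1$ for $t\ne 0$, this gives $\theta(a)\theta(a^\sigma)<p^f=|a|_{v_0}^{d_{v_0}}$ strictly, so $\nu_{\tau_\pi}<1$ and Theorem~\ref{teo:FCcondition}(b) yields the $\CFF$ property.

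If for some $p\in S$ no admissible choice makes all of these estimates strict (the best possible being equality $\theta(a)\theta(a^\sigma)=p^f$ for some valuation-$-1$ quotients), then Theorem~\ref{teo:FCcondition}(a) still gives $\CFP$, and I would upgrade to $\CFF$ by excluding infinite periodic expansions directly: by the first step a periodic tail contains no partial quotient with $|a|_{v_0}\ge p^2$, hence is a cyclic word over the finite set $\{a:\theta(a)\theta(a^\sigma)=p^f\}$; for each of the finitely many such cycles $(a_{j_1},\dots,a_{j_\ell})$ one computes the fixed points of the M\"obius map attached to $\mathcal{A}_{j_1}\cdots\mathcal{A}_{j_\ell}$ (see \eqref{eq:matriciA}), discards those not in $K$, and checks that running the algorithm \eqref{eq:alpharel} from the remaining ones does not reproduce the cycle, a contradiction. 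The main obstacle is exactly this control of the valuation-$-1$ partial quotients: the crude archimedean estimate is too weak there, so one must exploit the integrality $\pi^2\gamma\in\calo_K$ together with a careful choice of generator and coset representatives; and since the primes in $S$ behave differently (inert versus split, the size of the fundamental unit $1+\sqrt2$, whether a convenient $\pi$ exists), this step is an unavoidable finite but delicate case analysis, in the spirit of Lemma~\ref{lem:199} and the $D=17$ example.
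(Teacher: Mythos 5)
Your reduction to the valuation $-1$ partial quotients is sound (for $|a|_{v_0}^{d_{v_0}}\geq p^{2f}$ the crude estimate does win), but the whole difficulty of the proposition is concentrated in the step you merely assert: that one can choose $\pi$ and coset representatives so that every valuation $-1$ quotient satisfies $(|a|_\infty+1)(|a^\sigma|_\infty+1)\leq p^f$. You never prove this, and it is genuinely doubtful. The only freedom you have for a fixed coset is translation by the lattice $i(\mathfrak{P})$ (multiplying by units changes the residue class, so it cannot be used to balance $|a|_\infty$ against $|a^\sigma|_\infty$), and the bound $(|a|_\infty+1)(|a^\sigma|_\infty+1)=|N_{K/\QQ}(a)|+|a|_\infty+|a^\sigma|_\infty+1$ treats the two embeddings as independent: with the box representatives one only gets $|N_{K/\QQ}(a)|\leq \frac12 p^f$ and $|a|_\infty+|a^\sigma|_\infty\leq \frac{1+\sqrt2}{2}(\lambda+\lambda_\sigma)$, and summing these worst cases exceeds $p^f$ for every $p\in S$ --- that is exactly why $F_p>0$ on $\RR$. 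Asking for better representatives coset by coset is an inhomogeneous-minimum type statement (does $\{(u,v):(|u|+1)(|v|+1)\leq p^f\}$ cover $\RR^2$ under $i(\mathfrak{P})$-translation?) that you neither formulate nor verify; the Euclidean minimum $M(K)=\frac12$ only controls the product $|uv|$, not the sum $|u|+|v|$, so the appeal to the $D=17$-style unit neighbourhoods does not deliver what you need. The residual fallback via CFP plus exclusion of finitely many periodic cycles is likewise only a sketch.

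The paper closes the gap differently and without any case analysis over cosets: it keeps the same floor function ($a=\pi(x+y\sqrt2)$ with $|x|_\infty,|y|_\infty\leq\frac12$) and exploits the \emph{correlation} between the two embeddings rather than trying to shrink each one. In the coordinates $X=x+y\sqrt2$, $Y=x-y\sqrt2$ the image of the square is a thin parallelogram $\mathcal P$ (bounded by $|X+Y|\leq 1$, $|X-Y|\leq\sqrt2$), on which $|X|$ and $|Y|$ cannot be large simultaneously; one then bounds the exact quantity $Z(X,Y)=\theta(X\pi)\theta(Y\pi^\sigma)$ (not the lossy $(|a|_\infty+1)(|a^\sigma|_\infty+1)$) by monotonicity in $X$, reducing to two one-variable functions $Z_1,Z_2$ on the relevant sides, locating the single critical point of $Z_2$, and checking four explicit values against $\epsilon p^f$ with $\epsilon<1$. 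This yields $\nu_{\tau_\pi}<1$ uniformly over all partial quotients, including the valuation $-1$ ones. If you want to salvage your route, you must replace the asserted inequality by an actual verification for each of the finitely many valuation $-1$ cosets for each $p\in S$ (several hundred cosets for $p=23$), or adopt the parallelogram maximization, which is the missing idea.
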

\begin{proof}
Let us choose a generator $\pi$ of $\mathfrak{P}$ minimizing the distance from $\sqrt{p}$; for example,
\begin{itemize}
    \item for $p=3,5$ we set $\pi=p$; 
    \item for $p=7,17,23$, we choose $\pi$ such that $1<\pi<p$ and $\lambda+ \lambda_\sigma$ is minimum, where $\lambda=|\pi|_\infty$.
\end{itemize} 
Let $\alpha\in K_{v_0}$ be such that $|\alpha|_{v_0}>1$ and put $a=s_\pi(\alpha)$. Our goal is to show that there exists $\epsilon<1$ such that
$$|\theta(a)\theta(a^\sigma)|\leq \epsilon p^f,$$
so that we can apply Theorem \ref{teo:FCcondition} to prove the theorem. By recalling the construction of $\tau_\pi$,
 this amounts to show that the inequality 
$$\tilde{Z}(x,y)=\theta((x+\sqrt{2}y)\pi)\theta((x-\sqrt{2}y)\pi^\sigma)\leq \epsilon p^f$$
holds on the square $\{(x,y)\ |\ -\frac 1 2\leq  x,y\leq \frac 1 2\}$. By symmetry, it suffices to bound $\tilde{Z}$ on the square $\{(x,y)\ |\ 0 \leq  x,y\leq \frac 1 2\}$. By denoting $X=x+y\sqrt{2}$, $Y=x-y\sqrt{2}$, this amounts to show that 
$$Z(X,Y)=\theta(X\pi)\theta(Y\pi^\sigma)\leq \epsilon p^f, $$
on the parallelogram $\mathcal P$ delimited by the lines
$$r_1:X-Y=0, \quad\quad r_2:X-Y=\sqrt{2},\quad\quad s_1:X+Y=0, \quad\quad s_2:X+Y=1.$$
Since $Z$ is increasing in the variable $X$, its maximum on $\mathcal P$ must be achieved on the sides lying on the two lines $r_2$ and $s_2$. We are thus led to show that the two functions 
\begin{align*}
    Z_1(X)& =\theta(X\pi)\theta((\sqrt{2}-X)\pi^\sigma)\\
    Z_2(X)& =\theta(X\pi)\theta((X-1)\pi^\sigma)
\end{align*}
are bounded by $\epsilon\, p$ on the two intervals $I_1=[\frac {\sqrt{2}} 2,\frac {1+\sqrt{2}} 2 ]  $ and $I_2= [\frac 1 2,\frac {1+\sqrt{2}} 2 ]  $ respectively. The function $Z_1$ is increasing on $I_1$, therefore it achieves its maximum in $X=\frac {1+\sqrt{2}} 2$. 

We split the interval $I_2$ in two sub-intervals $I_2'=[\frac 1 2,1]$ and $I_2''=[1, \frac {1+\sqrt{2}} 2]$; the function $Z_2$ is increasing on $I_2''$, therefore its maximum is achieved in $X=\frac {1+\sqrt{2}} 2$. It follows that the values of $Z$ over $\mathcal P$ are bounded by the maximum between $Z_1(\frac {1+\sqrt{2}} 2)$, $Z_2\left (\frac 1 2 \right )$, $Z_2(\frac {1+\sqrt{2}} 2)$ and the local extremal values of $Z_2$. 
A direct calculation shows that the derivative of $Z_2$ has a unique zero in 
$$T=\frac 1 2 \left( 1+ 4\frac{\pi^2-(\pi^\sigma)^2} {p^2}\right), $$ 
and it is straightforward to show that 
$$ Z_1\left (\frac {1+\sqrt{2}} 2\right ), Z_2\left (\frac 1 2 \right ), Z_2\left(\frac {1+\sqrt{2}} 2\right ), Z_2(T)\leq \epsilon p^f,$$
for a suitably chosen $\epsilon <1$. The claim follows by applying Theorem \ref{teo:FCcondition}.
\end{proof}

\section{$\CFF$ property, class group and Euclidean ideal classes}

All the previous results about the  $\CFF$ property concern norm Euclidean number fields. In the following we investigate more deeply the relationship between $\CFF$ property and the structure of the ideal class group $\Cl(K)$, with the aim to show that our hypothesis is not too restrictive. In what follows, for every fractional ideal $I$, we will benote its class in $\Cl(K)$ by $[I]$.
\begin{proposition}\label{prop:Clciclico} Assume that the field $K$ satisfies the $\mathfrak{P}$-adic $\CFF$ property. Then, $\Cl(K)$ is cyclic generated by $[\mathfrak{P}]$. In particular, if $\mathfrak{P}$ is principal then $\calo_K$ is a $\mathrm{PID}$.
\end{proposition}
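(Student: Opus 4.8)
The strategy is to take an arbitrary element $\alpha\in K$ with a finite continued fraction expansion of type $\tau$ and read off, from the convergents, a relation between ideals of $\calo_K$ that forces $[\mathfrak{P}]$ to generate $\Cl(K)$. The key observation is that every element of $\calY_s$ lies in $\calo_{K,\{v_0\}}=\calo_K[\mathfrak{P}^{-1}]$ (localised away from $\mathfrak{P}$), since $|s(\alpha)|_v\le 1$ for all non-archimedean $v\ne v_0$ by property (b) of a $\mathfrak{P}$-adic floor function. Hence for any $a\in\calY_s$ the fractional ideal $(a)$ is a power of $\mathfrak{P}$ (possibly with a negative exponent), i.e. $[(a)]\in\langle[\mathfrak{P}]\rangle$ in $\Cl(K)$.

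First I would fix a fractional ideal $I$ of $\calo_K$ and show its class lies in $\langle[\mathfrak{P}]\rangle$. Since $\Cl(K)$ is generated by classes of prime ideals, it suffices to treat $I=\mathfrak{Q}$ a prime ideal $\ne\mathfrak{P}$; choosing a uniformiser one can produce an element $\alpha\in K$ whose denominator ideal is exactly $\mathfrak{Q}$ (more precisely, pick $\alpha$ with $v_{\mathfrak{Q}}(\alpha)=-1$ and $v_{\mathfrak{P}}(\alpha)$ arbitrary and $v_w(\alpha)\ge 0$ elsewhere, which is possible by approximation). By the $\CFF$ hypothesis, $\alpha=Q_n=A_n/B_n$ for some finite continued fraction $[a_0,\dots,a_n]$ with all $a_i\in\calY_s$. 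Then the recurrences $A_n=a_nA_{n-1}+A_{n-2}$, $B_n=a_nB_{n-1}+B_{n-2}$ show inductively that $A_n,B_n\in\calo_K[\mathfrak{P}^{-1}]$, so the principal fractional ideals $(A_n)$ and $(B_n)$ are supported only at $\mathfrak{P}$. Writing $\alpha=A_n/B_n$ and comparing $\mathfrak{Q}$-valuations (for $\mathfrak{Q}\ne\mathfrak{P}$) one gets $v_{\mathfrak{Q}}(A_n)-v_{\mathfrak{Q}}(B_n)=v_{\mathfrak{Q}}(\alpha)$, which is $0$ for every $\mathfrak{Q}\ne\mathfrak{P},\mathfrak{Q}$ — but both sides are $0$ anyway since $A_n,B_n$ are $\mathfrak{P}$-integral away from $\mathfrak{P}$. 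The cleanest way to package this: the \emph{numerator ideal} of $\alpha$, namely $\prod_{\mathfrak{Q}}\mathfrak{Q}^{\max(0,v_{\mathfrak{Q}}(\alpha))}$, equals a power of $\mathfrak{P}$ times (the appropriate part of) $(A_n)$, hence is in the group generated by $[\mathfrak{P}]$; and the denominator ideal of $\alpha$ likewise. Since $\alpha$ was chosen so that its denominator ideal is $\mathfrak{Q}$, we conclude $[\mathfrak{Q}]\in\langle[\mathfrak{P}]\rangle$.

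More efficiently, I would avoid convergents entirely and argue: given any $\alpha\in K^\times$ with finite expansion $[a_0,\dots,a_n]$, an easy induction on $n$ using $\alpha=a_0+1/(a_1+1/(\cdots+1/a_n))$ shows that the fractional ideal $(\alpha)$ is a product of powers of $\mathfrak{P}$ and of the fractional ideals $(a_i)$, each of which is a power of $\mathfrak{P}$; hence $(\alpha)$ itself is a power of $\mathfrak{P}$ as a \emph{fractional} ideal modulo principal ideals — more precisely $[(\alpha)]=1$ trivially, so instead one tracks that $\alpha\in\calo_K[\mathfrak{P}^{-1}]$ whenever all $a_i$ are (true since $a_0\in\calY_s\subseteq\calo_K[\mathfrak{P}^{-1}]$ and inverting/adding elements of $\calo_K[\mathfrak{P}^{-1}]$ stays inside it, as long as no denominator at $\mathfrak{P}$ vanishes, which holds because $|a_i|_{v_0}>1$ for $i\ge1$). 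Thus \emph{every} element of $K$ expressible by a finite $\tau$-expansion lies in $\calo_K[\mathfrak{P}^{-1}]$; by $\CFF$ this is all of $K$, so $\calo_K[\mathfrak{P}^{-1}]=K$, which is absurd unless... no — the correct deduction is that $\calo_K[\mathfrak{P}^{-1}]=K$ is false, so one must be more careful: $\alpha$ need not itself be in $\calY_s$, and $a_0=s(\alpha)$ only approximates $\alpha$ in $K_{v_0}$. The right statement is that $\alpha-s(\alpha)$ has positive $v_0$-valuation but can have negative valuation at other primes, so in fact $A_n/B_n$ can have a genuine $\mathfrak{Q}$-denominator coming from $B_n$. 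So I return to the convergent argument: $(B_n)$ is a fractional ideal supported only at $\mathfrak{P}$ (by the induction above, $B_n\in\calo_K[\mathfrak{P}^{-1}]$), hence $[(B_n\calo_K)]$, and therefore the class of the denominator ideal of $\alpha=A_n/B_n$, lies in $\langle[\mathfrak{P}]\rangle$; since $\alpha$ was built to have denominator ideal $\mathfrak{Q}$, we are done.

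The main obstacle is the last point just discussed: one must be careful that $A_n,B_n$ really are $\mathfrak{P}$-integral away from $\mathfrak{P}$ — this is exactly property (b) of the floor function, which guarantees $a_i\in\calo_{K,\{v_0\}}$, and the ring $\calo_{K,\{v_0\}}$ is closed under the ring operations, so the recurrences keep $A_n,B_n$ inside it; the only subtlety is that the inversions in the continued-fraction algorithm never leave this ring, which is ensured by $|a_i|_{v_0}>1$ for $i\ge 1$ together with \eqref{eq:K}, giving $B_n\ne 0$ in $K_{v_0}$ and hence a legitimate fractional ideal. Once $[(B_n)]\in\langle[\mathfrak{P}]\rangle$ is established, writing $(\alpha)=(A_n)(B_n)^{-1}$ shows the denominator ideal of $\alpha$ (its $\mathfrak{P}$-free part) is in $\langle[\mathfrak{P}]\rangle$; choosing $\alpha$ with prescribed prime denominator $\mathfrak{Q}$ then gives $[\mathfrak{Q}]\in\langle[\mathfrak{P}]\rangle$ for every prime $\mathfrak{Q}$, so $\Cl(K)=\langle[\mathfrak{P}]\rangle$. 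The final sentence is immediate: if $\mathfrak{P}$ is principal then $[\mathfrak{P}]=1$, so $\Cl(K)$ is trivial and $\calo_K$ is a PID.
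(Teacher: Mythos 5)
Your overall strategy is the one the paper follows: use $\CFF$ to write a suitably chosen $\alpha\in K$ as a finite expansion with partial quotients in $\calo_{K,\{v_0\}}$, hence as $A_n/B_n$ with $A_n,B_n\in\calo_{K,\{v_0\}}$, and read off that the prime-to-$\mathfrak{P}$ denominator of $\alpha$ has class in $\langle[\mathfrak{P}]\rangle$. But the decisive step has a genuine gap. You claim that $B_n\in\calo_K[\mathfrak{P}^{-1}]$ implies the fractional ideal $(B_n)$ is ``supported only at $\mathfrak{P}$''. This is false: membership in $\calo_{K,\{v_0\}}$ only gives $v_{\mathfrak{Q}}(B_n)\geq 0$ for every prime $\mathfrak{Q}\neq\mathfrak{P}$, so $(B_n)$ may well be divisible by other primes (indeed it must be, since you arranged $v_{\mathfrak{Q}}(\alpha)=-1$). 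What you actually need is that the denominator ideal of $\alpha$ \emph{equals} the prime-to-$\mathfrak{P}$ part of $(B_n)$ (whose class is automatically $[\mathfrak{P}]^{-v_{\mathfrak{P}}(B_n)}$, $(B_n)$ being principal), and that fails if $A_n$ and $B_n$ share a factor at some $\mathfrak{Q}\neq\mathfrak{P}$: the denominator ideal is then only a \emph{divisor} of that part, and a divisor of an ideal with class in $\langle[\mathfrak{P}]\rangle$ need not have class in $\langle[\mathfrak{P}]\rangle$. Concretely, if $h_K=2$, $\mathfrak{P}$ is principal, $\mathfrak{Q}$ is non-principal with $\mathfrak{Q}^2=(q)$ and $(c)=\mathfrak{Q}J$ with $J$ coprime to $\mathfrak{Q}$, then $\alpha=c/q$ is a quotient of elements of $\calo_{K,\{v_0\}}$ whose denominator ideal is $\mathfrak{Q}$, of non-trivial class; so nothing can be concluded from integrality of numerator and denominator alone.

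The missing ingredient is exactly the one the paper's proof uses: the unimodularity $A_nB_{n-1}-A_{n-1}B_n=(-1)^{n-1}$, which shows that $A_n$ and $B_n$ generate the unit ideal of $\calo_{K,\{v_0\}}$, i.e.\ $\min\bigl(v_{\mathfrak{Q}}(A_n),v_{\mathfrak{Q}}(B_n)\bigr)=0$ for every prime $\mathfrak{Q}\neq\mathfrak{P}$. With this coprimality inserted, your computation closes: for $\alpha$ with $v_{\mathfrak{Q}}(\alpha)=-1$ and $v_w(\alpha)\geq 0$ at all other non-archimedean $w\neq v_0$, one gets $(B_n)=\mathfrak{Q}\cdot\mathfrak{P}^{v_{\mathfrak{P}}(B_n)}$ and hence $[\mathfrak{Q}]=[\mathfrak{P}]^{-v_{\mathfrak{P}}(B_n)}$, which is your desired conclusion. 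The paper packages the same idea slightly differently: it notes that every $\alpha\in K$ equals $A/B$ with $A,B$ coprime in $\calo_{K,\{v_0\}}$, deduces that the $\calo_K$-ideal $(A,B)$ is a power of $\mathfrak{P}$ up to principal ideals, and applies this to $\alpha/\beta$ for an arbitrary two-generated ideal $I=(\alpha,\beta)$ (citing Cooke) rather than reducing to prime ideals. Your final sentence on the principal case is fine.
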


\begin{proof}
Let $n_0$ be the order of  $[\mathfrak{P}]$ in $\Cl(K)$ and let $\eta$ be a generator of $\mathfrak{P}^{n_0}$; then, $\calo_{K,\{v_0\}}=\calo_K[\frac 1 \eta]$.
Since $K$ satisfies the $\mathfrak{P}$-adic $\CFF$ property, then every element of $K$ can be expressed as a quotient $\frac{A}{B}$ of elements in $\calo_{K,\{v_0\}}$ such that $A$ and $B$ are coprime in $\calo_{K,\{v_0\}}$, i.e., the ideal generated by $A$ and $B$ in $\calo_{K,\{v_0\}}$ is trivial. This implies that a power of $\eta$ can be written as an $\calo_K$-linear combination of $A$ and $B$, hence the class of the fractional ideal  generated by $A$ and $B$ is a power of $[\mathfrak{P}]$. Now, let $I$ be any ideal of $\calo_K$; then $I$ admits a set of generators of cardinality two, i.e. $I=(\alpha,\beta)$, see e. g. \cite[\S 1.1 Cor. 5]{Narkiewicz2004}). By applying the above argument to $\frac \alpha\beta$ we find by \cite[Proposition 18]{Cooke1976b} that $[I]$ is a power of $[\mathfrak{P}]$ in $\Cl(K)$.
\end{proof}

\begin{corollary} Assume that  the field $K$ satisfies the $\mathfrak{P}$-adic $\CFF$ property for all but finitely many prime ideals $\mathfrak P$. Then $\calo_K$ is a $\mathrm{PID}$.
\end{corollary}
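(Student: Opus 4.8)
The corollary follows almost immediately from Proposition \ref{prop:Clciclico}, so the plan is short. The idea is to use the hypothesis not merely for a single prime but for infinitely many, which forces the class group to be generated by each of infinitely many distinct ideal classes simultaneously; the only way this can happen is if the class group is trivial.

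Concretely, I would argue as follows. By Proposition \ref{prop:Clciclico}, for every prime ideal $\mathfrak{P}$ for which $K$ satisfies the $\mathfrak{P}$-adic $\CFF$ property, the group $\Cl(K)$ is cyclic and is generated by the class $[\mathfrak{P}]$. Since $\Cl(K)$ is a finite abelian group, fix its order $h=|\Cl(K)|$. There are infinitely many prime ideals $\mathfrak{P}$ of $\calo_K$ satisfying the $\CFF$ property by hypothesis, and only finitely many prime ideals can have a given class in $\Cl(K)$ that is \emph{not} a generator — no, more simply: among these infinitely many prime ideals, infinitely many lie in any prescribed ideal class, by Chebotarev (or just because each of the finitely many classes must receive infinitely many of them). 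In particular, some such $\mathfrak{P}$ lies in the trivial class, i.e.\ is principal. For that $\mathfrak{P}$, Proposition \ref{prop:Clciclico} gives that $\Cl(K)$ is generated by $[\mathfrak{P}]=[\calo_K]$, the identity element, so $\Cl(K)$ is trivial and $\calo_K$ is a $\mathrm{PID}$.

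Alternatively, and avoiding Chebotarev entirely: every ideal class contains a prime ideal (a standard fact), so in particular there is a prime ideal $\mathfrak{Q}$ whose class generates $\Cl(K)$ — but we want something else. The cleanest route is: take two distinct primes $\mathfrak{P}_1, \mathfrak{P}_2$ among the infinitely many satisfying $\CFF$ whose classes are chosen so that $[\mathfrak{P}_1]$ and $[\mathfrak{P}_2]$ generate subgroups whose intersection is forced to be everything; since each individually generates all of $\Cl(K)$ we already know $\Cl(K)$ is cyclic of some order $h$, and then picking a prime ideal in the trivial class (which exists and, being one of infinitely many primes, may be taken to satisfy $\CFF$) finishes it. Either way the substance is entirely in Proposition \ref{prop:Clciclico}; the corollary is a formal consequence.

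I do not anticipate a real obstacle here: the only point requiring a little care is the assertion that among the cofinitely many primes satisfying $\CFF$ there is a principal one, which follows because the principal class, like every class, contains infinitely many prime ideals (e.g.\ by the Chebotarev density theorem applied to the Hilbert class field, or by Dirichlet's theorem combined with the splitting behaviour), so it cannot be contained in the finite exceptional set. Given that, invoking Proposition \ref{prop:Clciclico} with such a $\mathfrak{P}$ yields that $\Cl(K)$ is generated by the identity, hence trivial.
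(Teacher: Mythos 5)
Your proposal is correct and follows essentially the same route as the paper: since the principal ideal class contains infinitely many prime ideals, at least one such $\mathfrak{P}$ avoids the finite exceptional set, and Proposition \ref{prop:Clciclico} then forces $\Cl(K)$ to be generated by the trivial class. The digressions about Chebotarev and pairs of primes are unnecessary but harmless; the core argument matches the paper's proof.
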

\begin{proof}
This is a direct consequence of Proposition \ref{prop:Clciclico} since every class of ideals contains infinitely many prime ideals (see for example \cite[\S 7.2, Corollary 6 and p.93 for notation]{Narkiewicz2004}). In particular, our hypothesis ensures that $K$ satisfies the $\mathfrak{P}$-adic $\CFF$ property for at least one principal ideal $\mathfrak{P}$. \end{proof}

 Proposition \ref{prop:Clciclico} allows to give examples of number fields $K$ for which the $\CFF$ property fails to hold. For example, if $K=\QQ(\sqrt{-D})$ is an imaginary quadratic field and $r$ is the number of the distinct prime divisors of its discriminant, then $\Cl(K)$ maps onto $(\ZZ/2\ZZ)^{r-1}$ (see \cite[Theorem 8.23]{Narkiewicz2004}); hence it is not cyclic for $r\geq 3$. It follows that such fields do not satisfy the $\mathfrak{P}$-adic $\CFF$ property for any prime $\mathfrak{P}$. \begin{remark}
 If $\Cl(K)$ is cyclic generated by $[\mathfrak{P}]$ then $\calo_{K,\{v_0\}}$ is a $\mathrm{PID}$. By assuming the Generalized Riemann Hypothesis, it is Euclidean \cite[Theorem 10]{Treatman1998} (not necessarily with respect to the norm), and therefore it satisfies the Euclidean chain condition \cite[\S 14.1]{OMeara1965}, i.e. every $\alpha\in K$ can be expressed as a finite continued fraction
 \begin{eqnarray}\label{eq:ECC}
 \alpha&= &
b_0+\cfrac{1}{b_1+\cfrac{1}{\ddots+\cfrac{1}{b_n}}}=[b_0,\ldots, b_n], \hbox{ with $b_i\in \calo_{K,\{v_0\}}$ for $i\geq 0$. }
\end{eqnarray}
 If the $\CFF$ property holds for $K$, then every $\alpha\in K$ admits a representation of the form $\eqref{eq:ECC}$ satisfying the following additional properties, i.e.
 \begin{itemize}
     \item $|b_i|_{v_0}>1$ for $i>1$;
     \item if $|b_0|_{v_0}<1$, then $b_0=0$;
     \item if $b_i\equiv b_j\pmod{\mathfrak{P}}$, then $b_i=b_j$.
 \end{itemize}

 \end{remark}

 We conclude our work by presenting two results which attempt to go beyond the Euclidean assumption that we made along this paper. The next Theorem \ref{teo:euclideanclass} extends Theorem \ref{teo:euclidean} to prime ideals lying in a \emph{norm Euclidean ideal class} in the sense of \cite{LenstraJr1979}. For a fractional ideal $I$ of $K$ we will consider the following property:
 $$ \textrm{\textit{if $\beta\in K$, there exists $\alpha\in I$ such that $|N_{K/\QQ}(\alpha-\beta)|<N_{K/\QQ}(I)$}}.$$
 
 This property depends only on the class $[I]$ in $\Cl(K)$; such a class is called a \emph{norm Euclidean class}. Theorem 0.3 of \cite{LenstraJr1979} shows that $\Cl(K)$ contains at most one norm Euclidean class and, if there is one, it generates $\Cl(K)$. As it is shown in \cite{McGown2012}, for an ideal class $\calC$ one can give an analogous definition of Euclidean minimum $M_\calC$ and of inhomogeneous Euclidean minimum $\overline{M} _\calC$; moreover, if the rank $r=r_1+r_2-1$ of units is $>1$, one has $M_\calC=\overline{M}_\calC$. In particular if $r>1$, then a class $\calC$ is norm Euclidean if and only if $\overline{M}_\calC<1$.
 
 We notice that non principal Euclidean classes exist for example for fields like $\QQ(\sqrt{-15})$ and $\QQ(\sqrt{-20})$ (see \cite[Prop. 2.1]{LenstraJr1979}), and $\QQ(\sqrt{10})$, $\QQ(\sqrt{15})$, $\QQ(\sqrt{85})$ (see \cite[2.5]{LenstraJr1979}); other examples can be found in \cite{Lezowski2012}.
 
 For number fields having norm Euclidean class with inhomogeneous Euclidean minimum $<1$, one can generalise the proof of Theorem \ref{teo:euclideanclass} to prove the following result.
 
 \begin{theorem}\label{teo:euclideanclass} Let $K$ be a number field and assume that $K$ has a norm Euclidean ideal class $\calC$ such that $\overline{M}_\calC <1  $. Then $K$ satisfies the $\mathfrak{P}$-adic $\CFF$ property for all but finitely many $\mathfrak{P}\in\calC$.
 \end{theorem}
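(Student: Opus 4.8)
The plan is to run the argument of Theorem~\ref{teo:euclidean} with two substitutions: replace the lattice $i(\calo_K)$ by $i(\mathfrak{b}_0)$, where $\mathfrak{b}_0$ is a fixed integral ideal of $\calo_K$ with $[\mathfrak{b}_0]=\calC$, and replace a generator of the (possibly non‑principal) prime $\mathfrak{P}$ by a generator $\rho\in K^\times$ of the \emph{principal} fractional ideal $\mathfrak{P}\mathfrak{b}_0^{-1}$, which exists because $[\mathfrak{P}]=[\mathfrak{b}_0]=\calC$ in $\Cl(K)$. We discard the finitely many $\mathfrak{P}\in\calC$ of even residue characteristic or dividing $\mathfrak{b}_0$; for the remaining ones $\rho$ is a uniformizer at $v_0$, its poles lie only among the fixed primes dividing $\mathfrak{b}_0$, and $|N_{K/\QQ}(\rho)|=q/N_{K/\QQ}(\mathfrak{b}_0)$ with $q:=N_{K/\QQ}(\mathfrak{P})$. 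By Lemma~\ref{lem:univin} we may also choose $\rho$ (replacing it by $u\rho$ for a suitable unit $u$) so that $|\rho^\sigma|\le T_0\bigl(q/N_{K/\QQ}(\mathfrak{b}_0)\bigr)^{1/d}$ for every $\sigma\in\Sigma$.

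Since $\overline{M}_\calC<1$ by hypothesis, fix $c$ with $\overline{M}_\calC<c<1$; then every $\mathbf{x}\in\RR^{r_1}\times\CC^{r_2}$ satisfies $|N(\mathbf{x}-i(\alpha))|<c\,N_{K/\QQ}(\mathfrak{b}_0)$ for some $\alpha\in\mathfrak{b}_0$. Choosing a compact fundamental domain $\calD_0$ for $i(\mathfrak{b}_0)$ and invoking compactness, finitely many centres $\alpha_1,\dots,\alpha_s\in\mathfrak{b}_0$ suffice, so $\calD_0\subseteq\bigcup_{k=1}^s U_{\tilde\epsilon}(\alpha_k)$ with $\tilde\epsilon:=c\,N_{K/\QQ}(\mathfrak{b}_0)$. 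I would then define the floor function $s$ on a coset $\gamma+\mathfrak{P}\calo_{v_0}$ as follows: use strong approximation (Theorem~\ref{teo:strong_approx}) to pick $\alpha'\in K$ in the coset with $|\alpha'|_v\le 1$ for all non‑archimedean $v\ne v_0$; translate $\alpha'/\rho$ by an element of $\mathfrak{b}_0$ into $\calD_0$, calling the result $\beta$; pick a centre $\alpha_k$ with $|N_{K/\QQ}(\beta-\alpha_k)|<\tilde\epsilon$; and set $s(\gamma):=\rho(\beta-\alpha_k)$.

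The heart of the argument is then to verify that this $s$ is a genuine $\mathfrak{P}$-adic floor function. From the identity $\rho\,\mathfrak{b}_0=\mathfrak{P}$ of fractional ideals one gets $\rho\mu,\rho\alpha_k\in\mathfrak{P}\subseteq\mathfrak{P}\calo_{v_0}$ for $\mu,\alpha_k\in\mathfrak{b}_0$, whence $s(\gamma)\equiv\rho\beta\equiv\alpha'\equiv\gamma\pmod{\mathfrak{P}\calo_{v_0}}$; since $\alpha'\in\calo_{K,\{v_0\}}$ and $\rho$ has poles only along $\mathfrak{b}_0$, one checks $\alpha'/\rho,\beta,\beta-\alpha_k\in\mathfrak{b}_0\calo_{K,\{v_0\}}$, so $s(\gamma)\in\rho\,\mathfrak{b}_0\calo_{K,\{v_0\}}=\mathfrak{P}\calo_{K,\{v_0\}}\subseteq\calo_{K,\{v_0\}}$; property (a) follows from $s(\gamma)\equiv\gamma$, and $s$ depends only on the coset because two admissible choices of $\alpha'$ differ by an element of $\mathfrak{P}\calo_{v_0}\cap\calo_{K,\{v_0\}}$, whose quotient by $\rho$ lies in $\mathfrak{b}_0$, leaving the class of $\alpha'/\rho$ modulo $i(\mathfrak{b}_0)$ (hence $\beta$, and hence $\alpha_k$ once a fixed selection rule is adopted) unchanged; this also yields $s^2=s$ and $s(0)=0$.

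Finally I would estimate $\nu_\tau$. For $a=\rho(\beta-\alpha_k)\in\calY_s^1$ one has $|N_{K/\QQ}(a)|<\bigl(q/N_{K/\QQ}(\mathfrak{b}_0)\bigr)\cdot c\,N_{K/\QQ}(\mathfrak{b}_0)=cq$ and, since $i(\beta)$ stays in the compact set $\calD_0$ and the $\alpha_k$ are finitely many, $|a^\sigma|=|\rho^\sigma|\,|\beta^\sigma-\alpha_k^\sigma|\le C_1\,q^{1/d}$ for a constant $C_1$ depending only on $K$ and $\mathfrak{b}_0$. Expanding as in the proof of Theorem~\ref{teo:euclidean},
\[
\prod_{\sigma\in\Sigma}\theta(a^\sigma)<\prod_{\sigma\in\Sigma}(1+|a^\sigma|)=|N_{K/\QQ}(a)|+\sum_{S\subsetneq\Sigma}\prod_{\sigma\in S}|a^\sigma|\le cq+C_2\,q^{\frac{d-1}{d}},
\]
which is $<q$ once $q$ is large; on the other hand $|a|_{v_0}>1$, together with $\rho$ being a uniformizer at $v_0$ and $\mathfrak{b}_0$ prime to $\mathfrak{P}$, forces $|a|_{v_0}^{d_{v_0}}$ to be an integral power of $q$, hence $\ge q$. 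Thus $\nu_\tau<1$ for all $\mathfrak{P}\in\calC$ of large enough norm, and Theorem~\ref{teo:FCcondition}(b) gives the $\CFF$ property for the associated type; combined with the earlier finite exclusions this proves the statement. The obstacle I anticipate is the third step — tracking valuations through the auxiliary ideal $\mathfrak{b}_0$ to see that $s$ is well defined and lands in $\calo_{K,\{v_0\}}$ — together with the need for all constants ($T_0$, $C_1$, $C_2$, and the covering of $\calD_0$) to be uniform in $\mathfrak{P}$, which holds precisely because $\mathfrak{b}_0$ is fixed in advance. One should also note that it is the \emph{inhomogeneous} minimum $\overline{M}_\calC$, not $M_\calC$, that is needed, since the covering of $\calD_0$ must be valid at every real point and not only at points of $i(K)$.
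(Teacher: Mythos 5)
Your proposal is correct and follows essentially the same route as the paper: fix an auxiliary integral ideal in the class $\calC$ (the paper takes a prime $\mathfrak{Q}\in\calC$, you take a general $\mathfrak{b}_0$, which is immaterial), use Lemma~\ref{lem:univin} to control the archimedean sizes of a generator of $\mathfrak{P}\mathfrak{b}_0^{-1}$, cover a compact fundamental domain of the auxiliary lattice by finitely many norm-neighbourhoods of radius $<N_{K/\QQ}(\mathfrak{b}_0)$ using $\overline{M}_\calC<1$, and bound $\nu_\tau$ exactly as in Theorem~\ref{teo:euclidean}. Your extra care in checking that $s$ lands in $\calo_{K,\{v_0\}}$ and is well defined on cosets is a point the paper leaves implicit, but it is consistent with its construction.
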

 
 \begin{remark} If $r=r_1+r_2-1 >1$, then it is known that $M_\calC = \overline{M}_\calC <1  $  for every norm Euclidean class $\calC$. On the other hand, there are examples where $M_\calC =1  $, when $r\leq  1$ (see \cite{Lezowski2012, McGown2012}). It would be nice to replace the hypothesis $\overline{M}_\calC <1  $ in Theorem \ref{teo:euclideanclass} with the more natural hypothesis ${M}_\calC <1  $; however, unlike the case $\calC=\calo_K$, we do not know if an analogue of Theorem \ref{teo:cerri1} is true for an arbitrary norm Euclidean ideal classes. 
 \end{remark}
 
 \begin{proof}[Proof of Theorem \ref{teo:euclideanclass}]
  Fix $\mathfrak{Q}$ a prime ideal in $\calC$ and let  $\calD=\calD_{\mathfrak{Q}}\subseteq \RR^{r_1}\times\CC^{r_2}$ be a fundamental domain for the lattice $i(\mathfrak{Q})$. Since $\calD$ is compact, there exists  a finite set $\alpha_1,\ldots,\alpha_s\in\mathfrak{Q}$ and $\epsilon<1$ such that $\calD\subseteq \bigcup_{i=1}^s U_{ \epsilon N_{K/\QQ}(\mathfrak{Q})}(\alpha_i)$. 
 
 For every prime ideal $\mathfrak{P}$ in $\calC$ we choose, by Lemma \ref{lem:univin}, an element $\gamma_{\mathfrak{P}}\in K$ such that $\gamma_{\mathfrak{P}}\mathfrak{Q}=\mathfrak{P}$ and $|\gamma_{\mathfrak{P}}^\sigma|_\infty \leq T_{0} \sqrt[d]{|N_{K/\QQ}(\gamma)|_{\infty}}$ for every $\sigma\in\Sigma$.
 Then, $\calD_\mathfrak{Q}=i(\gamma_{\mathfrak{P}}) \calD$ is a fundamental domain for $i(\mathfrak{P})$. We construct a type $\tau_{\mathfrak{P}}=(K,\mathfrak{P},s_{\mathfrak{P}})$ by mimicking the proof of Theorem \ref{teo:euclidean}: given a coset $\alpha+\mathfrak{P}_{v_{\mathfrak{P}}}$ in $K_{v_{\mathfrak{P}}}$ we find a representative $\beta\in \calD_{\mathfrak{P}}\cap \calo_{K,\{v_\mathfrak{P}\}}=\gamma_{\mathfrak{P}}(\calD\cap \calo_{K,\{v_\mathfrak{P}\}})$ ; then  $|N_{K/\QQ}(\gamma_{\mathfrak{P}}^{-1}\beta-\alpha_i)|_{\infty}< \epsilon N_{K/\QQ}(\mathfrak{P}) $ for some $i=1,\ldots,s$, so that $|N_{K/\QQ}(\beta-\gamma_{\mathfrak{P}}\alpha_i)|_{\infty}< \epsilon N_{K/\QQ}(\mathfrak{P}) $. We define $$a=s_\mathfrak{P}(\alpha+\mathfrak{P}_{v_{\mathfrak{P}}})=\beta-\gamma_{\mathfrak{P}}\alpha_i. $$
 By construction we have
 \begin{equation}\label{eq:limitanorma} 
 |N_{K/\QQ}(a)|_{\infty}< \epsilon N_{K/\QQ}(\mathfrak{P}).
 \end{equation}
 Moreover, since $\calD$ is bounded and the $\alpha_i$ are finitely many, there exists a constant $C$ (depending on $\mathfrak{Q}$) such that 
 \begin{equation}\label{eq:limitacomponenti} 
 |a^\sigma |_\infty \leq C|\gamma^\sigma|_\infty\leq CT_0\sqrt[d]{|N_{K/\QQ}(\gamma)|_{\infty}}=\frac {CT_0}{\sqrt[d]{N_{K/\QQ}(\mathfrak{Q})}}\sqrt[d]{N_{K/\QQ}(\mathfrak{P})}.\end{equation} 
 As in the proof of Theorem \ref{teo:euclidean}, from \eqref{eq:limitanorma} and \eqref{eq:limitacomponenti} we conclude that 
 $$
 \prod_{\sigma\in\Sigma}\theta(a^\sigma)< \prod_{\sigma\in\Sigma}(|a^\sigma|+1) <  \epsilon N_{K/\QQ}(\mathfrak{P})\quad\hbox{for } N_{K/\QQ}(\mathfrak{P})\gg 0.$$
 Therefore, $\nu_{\tau_\mathfrak{P}}<1$ for $N_{K/\QQ}(\mathfrak{P})\gg 0$, so that $\tau_\mathfrak{P}$ satisfies the $\CFF$ property by Theorem \ref{teo:FCcondition}.
 \end{proof}
 
\noindent For an arbitrary number field $K$ we have the following result:
 
 \begin{theorem}\label{teo:generalnumberfields} Let $K$ be a number field. Then, for all but finitely many prime ideals $\mathfrak{P}$, there exists a type $\tau_\mathfrak{P}=(K,\mathfrak{P},s_\mathfrak{P})$ with the following property:\\
 if $\alpha\in K$ and the continued fraction $\tau_\mathfrak{P}$-expansion $[a_0,a_1,a_2,\ldots ]$ for $\alpha$ is infinite, then $a_j\in \mathfrak{P}^{-1}$ for infinitely many $j\in \NN$.
\end{theorem}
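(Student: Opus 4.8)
The plan is to extract more from the height estimate in the proof of Theorem~\ref{teo:FCcondition} than was used there. Going through that argument, for any type $\tau=(K,\mathfrak{P},s)$ and any $\alpha\in K$ with $\tau$-expansion $[a_0,a_1,\dots]$ and complete quotients $\alpha_n$ there is a constant $C=C(\alpha)>0$ with
$$H(\alpha_{n+1})^d\ \le\ C\,H(\alpha)^d\prod_{j=1}^{n}\frac{\prod_{\sigma\in\Sigma}\theta(a_j^\sigma)}{|a_j|_{v_0}^{d_{v_0}}},$$
since $|V_{n-1}|_{v_0}^{d_{v_0}}=\prod_{j=1}^n|a_j|_{v_0}^{-d_{v_0}}$ by Proposition~\ref{prop:comblinnulla}. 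For $j\ge 1$ we have $a_j\in\calY_s^1$, so $v_{\mathfrak{P}}(a_j)\le -1$, and $a_j\in\calo_{K,\{v_0\}}$; hence $a_j\in\mathfrak{P}^{-1}$ is \emph{equivalent} to $v_{\mathfrak{P}}(a_j)=-1$. It is therefore enough to exhibit, for all but finitely many $\mathfrak{P}$, a type $\tau_{\mathfrak{P}}$ and a real number $\rho_{\mathfrak{P}}<1$ such that
$$\frac{\prod_{\sigma\in\Sigma}\theta(a^\sigma)}{|a|_{v_0}^{d_{v_0}}}\ \le\ \rho_{\mathfrak{P}}\qquad\text{for every }a\in\calY_{s}^1\text{ with }v_{\mathfrak{P}}(a)\le -2 .$$
Granting this, if some $\alpha\in K$ had an infinite $\tau_{\mathfrak{P}}$-expansion with only finitely many $j$ such that $a_j\in\mathfrak{P}^{-1}$, then $v_{\mathfrak{P}}(a_j)\le -2$ for all $j\ge N$ and suitable $N$; splitting the product at $N$, the first $N-1$ factors contribute a fixed constant and each later factor is $\le\rho_{\mathfrak{P}}$, so $H(\alpha_{n+1})\to 0$, contradicting $H\ge 1$.

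To build $\tau_{\mathfrak{P}}$ I would imitate the proof of Theorem~\ref{teo:euclidean}, keeping track of the $\mathfrak{P}$-adic valuation and using only that the inhomogeneous minimum $\overline{M}(K)$ is \emph{finite} (in contrast to Theorem~\ref{teo:euclidean}, which needs $M(K)<1$). Suppose first $\mathfrak{P}=(\pi)$ is principal; by Lemma~\ref{lem:univin} choose the generator so that $|\pi^\sigma|\le T_0\,q^{1/d}$ for every $\sigma\in\Sigma$, where $q=N_{K/\QQ}(\mathfrak{P})$. Fix a compact fundamental domain $\calD$ of $i(\calo_K)$; since $\overline{M}(K)<\infty$ and $\calD$ is compact, $\calD$ is covered by finitely many $U_{c_0}(\alpha_1),\dots,U_{c_0}(\alpha_s)$ with $\alpha_i\in\calo_K$ and $c_0=\overline{M}(K)+1$ — here $c_0$ need \emph{not} be $<1$. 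Given a coset $\alpha+\mathfrak{P}\calo_{v_0}$, pick by strong approximation a representative $\alpha'\in\calo_{K,\{v_0\}}=\calo_K[\tfrac1\pi]$, subtract the appropriate element of $\calo_K$ from $\alpha'/\pi$ to obtain $\beta\in\calo_K[\tfrac1\pi]$ with $i(\beta)\in\calD$ and $\alpha'\equiv\pi\beta\pmod{\mathfrak{P}}$, select $\alpha_i$ with $|N_{K/\QQ}(\beta-\alpha_i)|<c_0$, and set $s(\alpha+\mathfrak{P}\calo_{v_0})=\pi(\beta-\alpha_i)$. After fixing these choices coset by coset this is a valid $\mathfrak{P}$-adic floor function exactly as in Theorem~\ref{teo:euclidean}; moreover $v_{\mathfrak{P}}(\pi(\beta-\alpha_i))=v_{\mathfrak{P}}(\alpha')$, so the valuation of a partial quotient equals that of its complete quotient, and every partial quotient $a=\pi(\beta-\alpha_i)$ satisfies
$$|N_{K/\QQ}(a)|<c_0\,q\qquad\text{and}\qquad |a^\sigma|\le H\,T_0\,q^{1/d}\quad(\sigma\in\Sigma),$$
where $H=\sup\{|(\beta-\alpha_i)^\sigma| : i(\beta)\in\calD,\ 1\le i\le s\}$ depends only on $K$; crucially these bounds are \emph{uniform in} $v_{\mathfrak{P}}(a)$, because that valuation is carried by $\beta$ being a non-integral point of $\calo_K[\tfrac1\pi]\cap\calD$ and does not affect the archimedean sizes.

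Now take $a\in\calY_s^1$ with $v_{\mathfrak{P}}(a)=-k\le -2$, so that $|a|_{v_0}^{d_{v_0}}=q^{k}$; from $|N_{K/\QQ}(a)|<\prod_\sigma\theta(a^\sigma)<\prod_\sigma(1+|a^\sigma|)=|N_{K/\QQ}(a)|+\sum_{S\subsetneq\Sigma}\prod_{\sigma\in S}|a^\sigma|$ and the bounds above one gets
$$\frac{\prod_{\sigma\in\Sigma}\theta(a^\sigma)}{|a|_{v_0}^{d_{v_0}}}\ <\ c_0\,q^{1-k}+C_1\,q^{(d-1)/d-k}\ \le\ c_0\,q^{-1}+C_1\,q^{-1-1/d}$$
for a constant $C_1$ depending only on $K$, which is $<\tfrac12=:\rho_{\mathfrak{P}}$ once the residue characteristic of $\mathfrak{P}$ is large enough; this settles the principal case. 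For non-principal $\mathfrak{P}$ one runs the very same argument together with the device in the proof of Theorem~\ref{teo:euclideanclass}: fix one prime $\mathfrak{Q}_\calC$ in each ideal class $\calC$ and, for $\mathfrak{P}\in\calC$, use Lemma~\ref{lem:univin} to choose $\gamma_{\mathfrak{P}}$ with $(\gamma_{\mathfrak{P}})=\mathfrak{P}\mathfrak{Q}_\calC^{-1}$ and $|\gamma_{\mathfrak{P}}^\sigma|\le T_0|N_{K/\QQ}(\gamma_{\mathfrak{P}})|^{1/d}$; the analogues of the two displayed bounds hold with constants depending only on $K$, and the estimate for $k\ge 2$ is unchanged.

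The main obstacle — and the reason the statement only claims that partial quotients of valuation $-1$ recur rather than asserting the full $\CFF$ property — is precisely the case $k=1$: there $|a|_{v_0}^{d_{v_0}}=q$ matches the leading term $|N_{K/\QQ}(a)|<c_0\,q$ of $\prod_\sigma\theta(a^\sigma)$, and since $c_0=\overline{M}(K)+1\ge 1$ there is no slack to push the quotient below $1$ for large $q$. Everything else is book-keeping of the constructions of Theorems~\ref{teo:euclidean} and~\ref{teo:euclideanclass}; the one point that genuinely needs care is checking that the archimedean bound $|a^\sigma|\le H T_0 q^{1/d}$ really is uniform over all partial quotients, no matter how negative their $\mathfrak{P}$-adic valuation.
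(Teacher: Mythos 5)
Your proposal is correct and follows essentially the same route as the paper: build the type from a finite covering of a fundamental domain (using only finiteness of the inhomogeneous Euclidean minimum, with the archimedean sizes of partial quotients controlled uniformly via Lemma \ref{lem:univin} and the device of Theorem \ref{teo:euclideanclass} for non-principal classes), observe that $a_j\notin\mathfrak{P}^{-1}$ forces $|a_j|_{v_0}^{d_{v_0}}\geq N_{K/\QQ}(\mathfrak{P})^2$ so the ratio $\prod_\sigma\theta(a_j^\sigma)/|a_j|_{v_0}^{d_{v_0}}$ drops below $1$ for $N_{K/\QQ}(\mathfrak{P})\gg 0$, and derive a contradiction from the height estimate underlying Theorem \ref{teo:FCcondition} applied to a tail of the expansion. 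Your explicit re-derivation of the height inequality (rather than citing Theorem \ref{teo:FCcondition} as a black box) and your remark on why the case $v_{\mathfrak{P}}(a)=-1$ cannot be handled are accurate but do not change the substance of the argument.
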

 
\begin{proof}
Fix an ideal class $\calC$ and let $\overline{M}_\calC$ be the inhomogeneous euclidean minimum of $\calC$; let $\epsilon > \overline{M}_\calC$. As in the proof of Theorem \ref{teo:euclideanclass}, we can fix a prime ideal $\mathfrak{Q}\in\calC $ and construct for every $\mathfrak{P}\in\calC$ a type $\tau_\mathfrak{P}=(K,\mathfrak{P},s_\mathfrak{P})$ such that every $a$ belonging to the image of $s_\mathfrak{P}$ satisfies conditions \eqref{eq:limitanorma} and \eqref{eq:limitacomponenti}.
Then, it is easy to see that there exists a suitable constant $T_\calC<1$, depending only on the class $\calC$ such that, for every $\delta >0$,
$$\prod_{\sigma\in\Sigma}\theta(a^\sigma)\leq  \prod_{\sigma\in\Sigma}(|a^\sigma|+1) <  \epsilon N_{K/\QQ}(\mathfrak{P})< T_\calC N_{K/\QQ}(\mathfrak{P})^{1+\delta} \quad\hbox{for } N_{K/\QQ}(\mathfrak{P})\gg 0.$$
Since $\Cl(K)$ is finite, we deduce that there is a constant $T<1$ depending only on $K$ such that, for all but finitely many prime ideal $\mathfrak{P}$ of $\calo_K$ and every $a$ in the image of $s_\mathfrak{P}$, we have
$$\prod_{\sigma\in\Sigma}\theta(a^\sigma)< T N_{K/\QQ}(\mathfrak{P})^{1+\epsilon}.$$
Now assume by contradiction that there exists $\alpha\in K$ such that the continued fraction $\tau_\mathfrak{P}$-expansion $[a_0,a_1,a_2,\ldots ]$ of $\alpha$ is infinite and such $a_j\not\in \mathfrak{P}^{-1}$ for all but finitely many $j\in \NN$. Possibly taking the tail of the expansion, we can assume that $a_j\not\in \mathfrak{P}^{-1}$ for every $j\in \NN$; this implies that
$$ \sup_j\left\{ \frac{\prod_{\sigma\in\Sigma}\theta(a_j^\sigma)} 
{|a_j|_{v_\mathfrak{P}}}\right\} \leq \sup_j\left\{ \frac{\prod_{\sigma\in\Sigma}\theta(a_j^\sigma)} 
{N_{K/\QQ}(\mathfrak{P})^2}\right\} \leq T<1,$$
which gives a contradiction by Theorem \ref{teo:FCcondition}. 
\end{proof}

 \section*{Acknowledgements} We thank Marzio Mula and Francesco Veneziano for useful conversations. 
\bibliographystyle{abbrv}
\bibliography{MCF}

\end{document}